\newcommand{\argument}{{-}}
\newcommand{\Pol}{\operatorname{\mathcal P}}
\newcommand{\Hol}{\operatorname{\mathcal O}}
\newcommand{\Smooth}{\mathcal C^\infty}
\newcommand{\Hom}{\operatorname{Hom}}
\newcommand{\Ana}{\mathcal{A}}
\newcommand{\RE}{\mathrm{Re}}
\newcommand{\E}{\mathrm e}
\newcommand{\I}{\mathrm i}
\newcommand{\cc}[1]{\overline{#1}}
\let\origphi\phi
\renewcommand{\phi}{\varphi}
\newcommand{\TR}[1][R]{\mathrm T \mkern-2mu_{#1}}
\newcommand{\TRtop}{$\TR$\-/topology}
\newcommand{\realDisc}[1][r]{\mathbb I_{#1}}
\newcommand{\wickDisc}[1][r]{\mathbb J_{#1}}
\newcommand{\complexDisc}[1][r]{\mathbb D_{#1}}
\newcommand{\antidiag}[1][d]{\mathbb J^{#1}}
\newcommand{\llrr}[1]{\llbracket #1 \rrbracket}
\newcommand{\abs}[2][]{#1\lvert#2#1\rvert}
\newcommand{\norm}[2][]{#1\lVert#2#1\rVert}
\newcommand{\dualpairing}[3][]{#1\langle#2,#3#1\rangle}
\newcommand{\triplenorm}[2][]{{#1\vert\kern-0.25ex#1\vert\kern-0.25ex#1\vert #2#1\vert\kern-0.25ex#1\vert\kern-0.25ex#1\vert}}
\newcommand{\set}[2][]{#1\{#2#1\}}
\newcommand{\MG}{\mathrm{MG}}
\newcommand{\weak}[1]{#1_{\MG}}
\newcommand{\weakPol}{\weak{\Pol}}
\newcommand{\weakNorm}[3][]{\norm[#1]{#2}_{#3}^{\MG}}
\newcommand{\punkt}{\,.}
\newcommand{\komma}{\,,}
\newcommand{\hatotimes}{\mathbin{\widehat{\otimes}}}
\newcommand{\varstar}{\ast}
\numberwithin{equation}{section}
\renewcommand{\theequation}{\thesection.\arabic{equation}}
\newtheorem{theorem}[equation]{Theorem}
\newtheorem{lemma}[equation]{Lemma}
\newtheorem{proposition}[equation]{Proposition}
\newtheorem{corollary}[equation]{Corollary}
\theoremstyle{definition}
\newtheorem{definition}[equation]{Definition}
\newtheorem{convention}[equation]{Convention}
\newtheorem{notation}[equation]{Notation}
\newtheorem{example}[equation]{Example}
\newtheorem{remark}[equation]{Remark}
\newcommand{\refitem}[1]{(\ref{#1})}
\title{Strict quantization of polynomial Poisson structures}
\author[1,2]{Severin Barmeier\footnote{s.barmeier@gmail.com}}
\author[3]{Philipp Schmitt\footnote{schmitt@math.uni-hannover.de}}
\affil[1]{\small Mathematical Institute, University of Cologne, Weyertal 86-90, 50931 Köln, Germany}
\affil[2]{\small Mathematical Institute, University of Freiburg, Ernst-Zermelo-Str.~1, 79104 Freiburg i.~Br., Germany}
\affil[3]{\small Institute of Analysis, Leibniz University Hannover, Welfengarten 1, 30167 Hannover, Germany}
\date{}
\begin{document}

\maketitle

\begin{abstract}%
We show how combinatorial star products can be used to obtain strict deformation quantizations of 
polynomial Poisson structures on $\mathbb R^d$, generalizing known results for 
constant and linear Poisson structures to polynomial Poisson structures of arbitrary degree.
We give several examples of nonlinear Poisson structures and construct explicit formal star 
products whose deformation parameter can be evaluated to any real value of $\hbar$, giving strict 
quantizations on the space of analytic functions on $\mathbb R^d$ with infinite radius of 
convergence.

We also address further questions such as continuity of the classical limit $\hbar \to 0$, 
compatibility with $^*$\=/involutions, and the existence of positive linear functionals. The latter 
can be used to realize the strict quantizations as $^*$\=/algebras of operators on a pre-Hilbert 
space which we demonstrate in a concrete example.
\end{abstract}

\setcounter{tocdepth}{2}
\tableofcontents

\section{Introduction}

The convergence of formal star products is arguably one of the most important outstanding issues in the deformation quantization programme initiated by Bayen--Flato--Frønsdal--Lichnerowicz--Sternheimer \cite{bayen}. Deformation quantization aims to reverse-engineer a quantum mechanical observable algebra from the classical observable algebra obtained in the limit $\hbar \to 0$. For this process, one starts with a formal star product $f \star g = fg + \sum_{n \geq 1} t^n B_n (f, g)$, the purported perturbative expansion around $\hbar = 0$ of the algebras obtained from a quantum observable algebra by letting $\hbar \to 0$, and views it as a formal deformation of the commutative algebra of classical observables. The formal deformation parameter $t$ stands in for the (reduced) Planck constant $\hbar$ and the existence of formal star products was shown for arbitrary Poisson manifolds by M.~Kontsevich \cite{kontsevich1}. As $\hbar$ is not a formal parameter but a dimensional constant, the physical interpretation of deformation quantization depends on the existence of a ``strict'' quantization \cite{waldmann2}, which within the deformation quantization programme should be obtained by evaluating the formal deformation parameter $t$ to the physical value of $\hbar$. For this evaluation to make sense, the formal star product should be given by convergent series, but studying convergence questions for star products is already a nontrivial undertaking for constant or linear Poisson structures on $\mathbb R^d$ \cite{beiserwaldmann,espositostaporwaldmann,waldmann1}. Notably, the convergence of star products fails for {\it any} nontrivial Poisson structure when considering the space $\Smooth (\mathbb R^d)$ of all smooth complex-valued functions on $\mathbb R^d$.

A general strategy for addressing the problem of convergence in deformation quantization was formulated by S.~Beiser and S.~Waldmann in \cite{beiserwaldmann}. Given a Poisson manifold $X$, one considers a suitable {\it subspace} of the space of smooth functions on $X$ for which the star product is well defined for complex values of $\hbar$. For example, when $X = \mathbb R^d$ carries a polynomial Poisson structure, one may consider the subspace $\Pol (\mathbb R^d) \subset \Smooth (\mathbb R^d)$ of polynomial functions and work with a formal star product which converges on $\Pol (\mathbb R^d)$, giving rise to an associative product $\star_\hbar \colon \Pol(\mathbb R^d) \times \Pol(\mathbb R^d) \to \Pol(\mathbb R^d)$. One then completes $\Pol(\mathbb R^d)$ with respect to a suitable (locally convex) topology for which $\star_\hbar$ is continuous. If done right, the completion will contain many more physically interesting functions, such as exponential functions, say. For constant and linear Poisson structures, one can work with the Moyal--Weyl and Gutt star products, respectively. In this case, convergence on polynomials is immediate, since for all $f, g \in \Pol (\mathbb R^d)$ their formal star product $f \star g$ is a polynomial in the formal parameter which can be evaluated to any value of $\hbar$. The continuity of star products, and the properties of the algebra obtained by completion, was studied successfully for constant and linear Poisson structures \cite{waldmann1,espositostaporwaldmann}, also in infinite-dimensional, field-theoretic \cite{schoetzwaldmann} and ``global'' settings, such as on coadjoint orbits of Lie groups \cite{krausrothschoetzwaldmann,schmitt,schmittschoetz} or on cotangent bundles of Lie groups \cite{heinsrothwaldmann} (see \cite{waldmann2} for a survey). Yet, although constant and linear Poisson structures are important classes of Poisson structures, one cannot expect them to cover all physically relevant phase spaces.

In this article, we develop an approach to deal with the issue of convergence and continuity for 
star products quantizing {\it nonlinear} polynomial Poisson structures on $\mathbb R^d$. For 
nonlinear Poisson structures one cannot use the Moyal--Weyl and Gutt star products, so one needs a 
formal star product that can be shown to converge on polynomials. Although Kontsevich's universal 
quantization formula \cite[\S 2]{kontsevich1} can be made very explicit, it is only a finite sum on 
polynomials for constant or linear Poisson structures \cite{kontsevich1,dito}.
For nonlinear Poisson structures, the asymptotics of the multiple zeta values appearing as weights 
of certain graphs in the formula (see \cite{bankspanzerpym}) make the convergence properties of the 
Kontsevich star product on polynomials difficult to determine and Kontsevich's conjecture on the 
convergence of the Kontsevich star product \cite[Conj.~1]{kontsevich2} is still widely open (cf.\ 
\cite[\S 1.1]{bankspanzerpym}).

We work with the combinatorial star products introduced in \cite{barmeierwang} via natural higher 
structures on the Koszul complex, which can be used to produce explicit formulae for quantizations 
of polynomial Poisson structures (see \S\ref{subsection:combinatorial} and \S\ref{subsec:combinatorialinvolutions}) for which convergence on polynomials can be shown directly (see \S\ref{subsec:convergence}). These formulae can then be used 
in continuity estimates and the star product can be extended from the space of polynomial functions 
to larger function spaces (see \S\ref{sec:continuity}). To this end we work with a range of 
locally convex topologies on $\Pol (\mathbb R^d)$ which are adapted to the various Poisson structures at hand, notably the MacGyver topology (Definition \ref{def:norms:general}) and the $\TR$\=/topology (Definition \ref{definition:TR}).

For quantizations of polynomial Poisson structures satisfying a certain finiteness condition on the associated combinatorial star product, we prove the following general result.

\begin{theorem}[Theorem \ref{theorem:continuity:general}]
\label{theorem:main1}
	Let $\star$ be a combinatorial star product quantizing a polynomial Poisson structure $\eta$ on $\mathbb R^d$ and assume:
	\begin{enumerate}[label={\textup{(\alph*)}}]
		\item For any $1 \leq i,j \leq d$ we have
		\begin{equation}
			x_i \star x_j = \sum_{K \in \mathbb N_0^d} q_{i,j,K} x^K
		\end{equation}
		where $q_{i,j,K} \in \mathbb C \llrr{t}$ are power series expansions of holomorphic functions defined on an open 
		neighbourhood $\Omega$ of $0 
		\in \mathbb C$, only finitely of which are non-zero.
		\item
		There is a constant $\alpha$ (independent of $K$ and $L$) such that at most $\alpha (\abs K + \abs L)^2$ many reductions are needed to compute $x^K \star x^L$.
		\item There is a constant $\beta$ (independent of $K$ and $L$) such that $x^K \star x^L$ is a sum of monomials of order not greater than $\beta (\abs K + \abs L)$.
	\end{enumerate}
	Then evaluating $t \mapsto \hbar$ for any $\hbar \in \Omega$, 
	the resulting product $\star_\hbar$ is continuous with respect to the MacGyver topology on $\Pol (\mathbb R^d)$ and extends uniquely to a continuous product on the completion $\weak{\widehat{\Pol}{}}(\mathbb R^d)$ whose Taylor expansion around $\hbar = 0$ recovers the formal combinatorial star product $\star$.
	
	Moreover, $(\weak{\widehat{\Pol}} (\mathbb R^d), \star_\hbar)$ is a strict deformation quantization of $(\mathbb R^d, \eta)$.
\end{theorem}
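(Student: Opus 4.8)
The plan is to first extract from the combinatorial data~(a)--(c) a seminorm estimate showing that $\star_\hbar$ is continuous on $\Pol(\mathbb R^d)$ equipped with the MacGyver topology, for every $\hbar\in\Omega$, and then to obtain the extension to the completion, the recovery of the formal star product, and the strict deformation quantization properties by density together with holomorphic dependence on~$\hbar$. For the continuity estimate, fix a compact $Q\subset\Omega$ and write $x^K\star_\hbar x^L=\sum_M c^{K,L}_M(\hbar)\,x^M$. By~(b) this product is computed by at most $\alpha(\abs{K}+\abs{L})^2$ applications of the reduction rules underlying the combinatorial star product; each application rewrites a local factor $x_i\star x_j$ as $\sum_N q_{i,j,N}(\hbar)\,x^N$, and by~(a) only finitely many of these coefficients are nonzero, each being holomorphic on $\Omega$ and hence bounded on $Q$. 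By~(c) every monomial occurring at any intermediate stage has degree at most $\beta(\abs{K}+\abs{L})$, so the reduction tree carries only polynomially many (in $\abs{K}+\abs{L}$) distinct monomials at each stage. Propagating coefficient magnitudes through this tree bounds each $\abs{c^{K,L}_M(\hbar)}$, uniformly for $\hbar\in Q$, by an expression of precisely the growth type that the MacGyver seminorms of Definition~\ref{def:norms:general} are designed to absorb; summing over the (polynomially many) $M$ and inserting this into the definition of those seminorms yields, for each MacGyver seminorm $p$, another MacGyver seminorm $p'$ with $p(f\star_\hbar g)\le p'(f)\,p'(g)$ for all $f,g\in\Pol(\mathbb R^d)$. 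In particular $\star_\hbar$ is jointly continuous.

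A jointly continuous, submultiplicative bilinear map on the dense subalgebra $\Pol(\mathbb R^d)$ of the complete locally convex space $\weak{\widehat{\Pol}}(\mathbb R^d)$ extends uniquely to a jointly continuous bilinear map on the completion, and associativity and unitality pass to the completion because both sides of each identity are continuous and agree on $\Pol(\mathbb R^d)$. For the statement about the Taylor expansion, note that for polynomial $f,g$ the coefficients of $f\star_\hbar g$ are finite polynomial combinations of the $q_{i,j,N}(\hbar)$, hence holomorphic on $\Omega$, and that expanding them in $\hbar$ and reassembling recovers the formal series $f\star g=\sum_{n\ge0}t^n B_n(f,g)$ by the very construction of the combinatorial star product. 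The uniform bounds of the previous step upgrade this to an estimate $p\bigl(f\star_\hbar g-\sum_{n\le N}\hbar^n B_n(f,g)\bigr)\le\abs{\hbar}^{N+1}C^N p'(f)\,p'(g)$ for $\hbar$ near $0$, so that $\hbar\mapsto f\star_\hbar g$ is holomorphic into $\weak{\widehat{\Pol}}(\mathbb R^d)$ with Taylor expansion the formal star product; by continuity and density the same holds for all $f,g$ in the completion, with $B_n$ the unique continuous extensions of the operators of the formal star product.

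For the strict deformation quantization claim, restrict $\hbar$ to $\Omega\cap\mathbb R$. Setting $\hbar=0$ gives $q_{i,j,N}(0)=\delta_{x^N,\,x_i x_j}$, so $\star_0$ is the pointwise product on $\weak{\widehat{\Pol}}(\mathbb R^d)$. Holomorphy in $\hbar$ from the previous step yields the classical limit $\lim_{\hbar\to0}f\star_\hbar g=fg$ in the MacGyver topology, with remainder $O(\hbar)$ in each seminorm, as well as continuity of $\hbar\mapsto f\star_\hbar g$. For the Dirac/correspondence condition, $\hbar^{-1}\bigl(f\star_\hbar g-g\star_\hbar f\bigr)$ extends across $\hbar=0$ with value $B_1(f,g)-B_1(g,f)$, which equals $\{f,g\}_\eta$ because $\star$ quantizes the Poisson structure $\eta$; the uniform estimates make this convergence hold in the MacGyver topology and, by density, for all $f,g\in\weak{\widehat{\Pol}}(\mathbb R^d)$. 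Together with the deformation property established above, these are exactly the axioms of a strict deformation quantization of $(\mathbb R^d,\eta)$.

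The main obstacle is the first step: converting the purely combinatorial hypotheses~(b) and~(c) into a seminorm estimate that is uniform in $\hbar$ on compact subsets of $\Omega$. The quadratic bound on the number of reductions and the linear bound on the degrees are exactly what is needed, but one must track carefully how coefficient magnitudes compound along the (a priori branching) reduction tree and match the resulting growth against the right member of the MacGyver family of seminorms --- this is precisely the bookkeeping that the construction of the MacGyver topology in Definition~\ref{def:norms:general} is designed to make systematic. Once it is in place, the remaining steps are density arguments and elementary one-variable complex analysis in the parameter $\hbar$.
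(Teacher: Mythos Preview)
Your plan is essentially the paper's own argument: bound $\weakNorm{x^K\star_\hbar x^L}{C}$ by tracking how coefficient magnitudes compound through at most $\alpha(\abs K+\abs L)^2$ reductions, use the degree bound~(c) to control $C^{\abs M^2}$, and then observe that the resulting bound factors as a product of MacGyver seminorms of $x^K$ and $x^L$ at a larger parameter. The paper makes this explicit: with $Q = N\sup_{i,j,K,\hbar\in U}\abs{q_{i,j,K}(\hbar)}$ (where $N$ is the number of nonzero $q_{i,j,K}$ and $U\subset\Omega$ is a small closed disc) one gets
\[
\weakNorm{x^K\star_\hbar x^L}{C}\le Q^{\alpha(\abs K+\abs L)^2}C^{(\beta(\abs K+\abs L))^2}\le \weakNorm{x^K}{C'}\weakNorm{x^L}{C'}
\]
with $C'=(Q^\alpha C^{\beta^2})^2$, uniformly for $\hbar\in U$; the extension to the completion, holomorphy in $\hbar$, and the strict-quantization axioms follow exactly as you outline.

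One small correction: your sentence ``by~(c) every monomial occurring at any intermediate stage has degree at most $\beta(\abs K+\abs L)$'' is not what~(c) asserts --- hypothesis~(c) bounds only the degrees appearing in the \emph{final} expression $x^K\star x^L$, not at intermediate stages of the reduction. Fortunately you do not need this intermediate claim. The cleanest bookkeeping (and what the paper does) is to track only the $\ell^1$-mass of the coefficient vector: a single reduction replaces one term by at most $N$ terms with coefficients each scaled by at most $\sup\abs{q_{i,j,K}}=Q/N$, so after $\alpha(\abs K+\abs L)^2$ reductions the total coefficient mass is at most $Q^{\alpha(\abs K+\abs L)^2}$; hypothesis~(c) then enters only at the very end, to bound $\weakNorm{x^M}{C}=C^{\abs M^2}\le C^{\beta^2(\abs K+\abs L)^2}$ for each surviving monomial. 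This sidesteps any appeal to intermediate degrees or to counting intermediate monomials, and fills the gap you yourself flag in your final paragraph.
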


Theorem \ref{theorem:main1} provides strict deformation quantizations for a large class of polynomial Poisson structures --- in particular for all examples given in this article. However, the completion $\weak{\widehat{\Pol}} (\mathbb R^d)$ is not as large as one might hope (cf.\ Remark \ref{remark:smallCompletion}). In concrete examples much stronger results can be obtained, as summarized in a slightly simplified form in the following theorem.

\begin{theorem}
\label{theorem:main2}
Let $\eta$ be any of the following nonlinear Poisson structures:
\begin{enumerate}[label={\textup{(\alph*)}}]
\item the log-canonical Poisson structure on $\mathbb R^d$ given by $\{ x_j, x_i \}_\eta = x_i x_j$ for $1 \leq i < j \leq d$
\item the exact Poisson structure on $\mathbb R^3$ associated to the function $-xyz - \frac{1}{N+1} x^{N+1}$ for any fixed $N \in \mathbb N_0$
\item the Poisson structure on $\mathbb R^3$ given by the bivector field $\eta = (yz + x^N) \frac{\partial}{\partial z} \wedge \frac{\partial}{\partial y}$ for any fixed $N \in \mathbb N_0$
\item the Poisson structure on $\mathbb R^2$ given by $\{ y, x \}_\eta = \frac12 (x^2 + y^2)$
\item the Poisson structure on $\mathbb R^2$ given by $\{ y, x \}_\eta = x y + c$ for any constant $c \in \mathbb R$.
\end{enumerate}
Then there exists a formal quantization $\star$ of $\eta$ which has the following convergence and continuity properties:
\begin{enumerate}
\item $\star$ converges on the algebra $\Pol(\mathbb R^d)$ of polynomial functions on $\mathbb R^d$ when evaluating the formal deformation parameter $t$ to any $\hbar \in \mathbb C$.
\item Evaluating $t \mapsto \hbar \in [0, \infty)$, the resulting strict star product $\star_\hbar$ is continuous with respect to the $\TR[0]$\=/topology on $\Pol (\mathbb R^d)$.
\item $\star_\hbar$ extends continuously to the completion of $\Pol (\mathbb R^d)$, which coincides with the space $\Ana (\mathbb R^d)$ of analytic functions with infinite radius of convergence.
\end{enumerate}
Moreover, $(\Ana (\mathbb R^d), \star_\hbar)$ is a strict deformation quantization of $(\mathbb R^d, \eta)$.
\end{theorem}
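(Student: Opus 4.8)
The plan is to prove the theorem example by example: for each of (a)--(e) I would exhibit an explicit combinatorial star product quantizing $\eta$, establish a continuity estimate sharp enough for the $\TR[0]$\=/topology, and then read off the strict deformation quantization property from the classical limit. The construction of the star products follows \S\ref{subsection:combinatorial}: for each structure one writes down a reduction system presenting the deformed algebra and reads off the products of generators $x_i \star x_j = \sum_K q_{i,j,K}\, x^K$. In every case on the list the $q_{i,j,K}(t)$ are \emph{entire} functions of $t$ --- for the log-canonical structure (a) the ordered monomials multiply by a single exponential, $x^K \star x^L = \E^{c(K,L)\, t}\, x^{K+L}$ with $c(K,L)$ bilinear in the exponents, while for (b)--(e) the reduction systems are such that $x^K \star x^L$ is a finite sum of monomials with polynomial (hence entire) coefficients in $t$, of order bounded linearly in $\abs K + \abs L$ (with the constant depending on the degree parameter $N$ in (b)--(c)). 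Consequently $t \mapsto \hbar$ may be evaluated for every $\hbar \in \mathbb C$, and associativity of $\star_\hbar$ on $\Pol(\mathbb R^d)$ follows because the formal associativity identity, restricted to polynomial arguments, is an identity of entire functions of $t$ (see \S\ref{subsec:convergence}); this is statement (i).

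Next I would establish, separately for each of the five cases, a continuity estimate bounding each $\TR[0]$\=/seminorm of $f \star_\hbar g$ by a product of (larger) $\TR[0]$\=/seminorms of $f$ and of $g$, with constants depending on $\hbar$ (and on $N$ in (b)--(c), which is harmless since $N$ is fixed). The inputs are the two complexity bounds underlying Theorem~\ref{theorem:main1} --- that computing $x^K \star x^L$ costs at most quadratically many reductions in $\abs K + \abs L$, and that the resulting monomials have order at most linear in $\abs K + \abs L$ --- together with the growth of the coefficients $q_{i,j,K}$; for the hand-picked structures here these can be made precise enough to close the estimate against $\TR[0]$\=/seminorms, rather than against the coarser MacGyver seminorms used in Theorem~\ref{theorem:main1}. \emph{This is the step I expect to be the main obstacle}: the count of which reductions occur and with which coefficients is genuinely structure-dependent, and the whole point of singling out the list (a)--(e) is that their low combinatorial complexity and homogeneity make the sharper estimate go through. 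I would organize this by first isolating a general $\TR[0]$\=/continuity criterion (in the spirit of Theorem~\ref{theorem:main1}, but with hypotheses adapted to the $\TR$\=/topology) and then verifying its hypotheses case by case from the explicit formulae. Statement (ii) is then immediate.

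For (iii) I would invoke the description, established in \S\ref{sec:continuity}, of the completion of $\Pol(\mathbb R^d)$ in the $\TR[0]$\=/topology as the space $\Ana(\mathbb R^d)$ of analytic functions of infinite radius of convergence: one checks that the $\TR[0]$\=/seminorms control all partial derivatives uniformly on compacta, so the completion embeds continuously into $\Ana(\mathbb R^d)$, and conversely that the Taylor polynomials of any $f \in \Ana(\mathbb R^d)$ converge to $f$ in the $\TR[0]$\=/topology, so the embedding is onto. The continuity estimate of the previous step then extends $\star_\hbar$ uniquely to a jointly continuous associative product on $\Ana(\mathbb R^d)$, and uniqueness of Taylor expansions around $\hbar = 0$ shows that this extended product recovers the formal combinatorial star product $\star$ --- this is the improvement over the necessarily smaller MacGyver completion of Theorem~\ref{theorem:main1} (cf.\ Remark~\ref{remark:smallCompletion}).

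It remains to check that $(\Ana(\mathbb R^d), \star_\hbar)$ is a strict deformation quantization of $(\mathbb R^d, \eta)$. Here $\star_0$ is the pointwise product because the reduction systems deform the commutative polynomial algebra; the map $\hbar \mapsto f \star_\hbar g$ is continuous --- indeed real-analytic --- into $\Ana(\mathbb R^d)$, uniformly on compact subsets of $[0,\infty)$, by the explicit $\hbar$\=/dependence of the $q_{i,j,K}$ together with the estimate of the previous step; and $\lim_{\hbar \to 0} \tfrac{1}{\hbar}\bigl(f \star_\hbar g - g \star_\hbar f\bigr) = \{f,g\}_\eta$ in the normalization of the paper, because the first-order term of the combinatorial star product is by construction the chosen Poisson bivector $\eta$. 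Since each of (a)--(e) is verified to define a genuine Poisson structure --- the Jacobi identity being checked in the respective example section --- this completes the proof.
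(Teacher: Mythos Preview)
Your overall strategy---construct an explicit combinatorial star product for each case, prove continuity, pass to the completion---matches the paper's. But the continuity step has a genuine gap.

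You propose to obtain $\TR[0]$-continuity from the reduction-counting bounds of Theorem~\ref{theorem:main1}: at most $\alpha(\abs K + \abs L)^2$ reductions, and resulting monomials of order at most $\beta(\abs K + \abs L)$. Even combined with bounded $q_{i,j,K}$, these give only
\[
\norm{x^K \star_\hbar x^L} \;\lesssim\; Q^{\alpha(\abs K + \abs L)^2}\, C^{\beta^2(\abs K + \abs L)^2}\,,
\]
which is exactly the MacGyver estimate, not a $\TR[0]$-estimate. A quadratic exponent in $\abs K + \abs L$ is unavoidable if all you know is that a quadratic number of reductions each contributes a bounded factor; there is no way to sharpen this to the exponential-in-$(\abs K + \abs L)$ growth that $\TR[0]$ demands without further input. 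Your phrase ``low combinatorial complexity and homogeneity'' does not supply that input.

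The paper does \emph{not} use reduction counting for these cases. It uses the closed-form product formulae---$x^K \star_\hbar x^L = q(\hbar)^{\sum_{i<j} K_j L_i}\, x^{K+L}$ for (a), formula~\eqref{eq:productformula:nonQuadratic:exact} for (b)--(c)---together with the key observation that for the specific parameter choices $q(\hbar) = \E^{\I\hbar}$ (and analogous $p,r$) one has $\abs{q(\hbar)} \leq 1$ whenever $\hbar \in [0,\infty)$. This bounds the coefficients \emph{uniformly} in $K,L$, yielding $\norm{x^K \star_\hbar x^L}_\rho \leq \norm{x^K}_{\rho'} \norm{x^L}_{\rho'}$ directly (Theorems~\ref{theorem:continuity:logcanononical:strong} and~\ref{theorem:continuity:exact}). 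This is precisely why statement~(ii) is restricted to $\hbar \in [0,\infty)$, a restriction your outline neither uses nor motivates.

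Two further case-specific points you miss. Case~(d) is not handled in the real coordinates $x,y$ at all: one passes to the Wick-type setup of \S\ref{subsubsection:wick}, writing $w_1 = \cc w_2$, in which the Poisson structure becomes $\{w_2,w_1\}_\eta = \I\, w_1 w_2$ and the log-canonical analysis applies verbatim (Theorem~\ref{theorem:strictWick} with $q(\hbar)=\E^{-\hbar}$). Case~(e) is the quantum Weyl algebra of Example~\ref{example:quantumWeyl}: the whole point is that one must take $\widetilde\phi(zy) = (\E^{\I\lambda t}-1)yz + \I t$ with $\lambda > 0$, so that the extra factor $\E^{\I\lambda\hbar}$ of modulus $\leq 1$ tames the estimate; the naive choice $\lambda = 0$ (the ordinary Weyl relation) fails $\TR[0]$-continuity. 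Your description ``polynomial (hence entire) coefficients in $t$'' is also inaccurate for (b)--(e) under the choices that actually work---the coefficients are entire (exponentials) but not polynomial.
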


The algebras $(\Ana (\mathbb R^d), \star_\hbar)$ obtained in Theorem \ref{theorem:main2} contain many well-behaved functions, such as exponential functions, and enjoy many nice properties. For example, for the log-canonical Poisson structure the algebras are locally multiplicatively convex. In this regard they are in fact better behaved than the algebras obtained by similar methods for the Moyal--Weyl or Gutt star products quantizing constant or linear Poisson structures. A particularly surprising case is the last Poisson structure in Theorem \ref{theorem:main2} which can be quantized to the quantum Weyl algebra $\mathbb C \langle x, y \rangle / (y x - \E^{\I \lambda \hbar} x y - \I \hbar)$ for $\lambda > 0$. This algebra can be completed to a much larger strict quantization than the standard Weyl algebra which corresponds to the case $\lambda = 0$ (see Example \ref{example:quantumWeyl}).

We also study further properties relevant to the physical interpretation of the convergence and continuity results, namely the compatibility with \emph{$^*$\=/involutions} (\S\ref{subsec:combinatorialinvolutions}) and the existence of \emph{positive linear functionals} (\S\ref{subsec:continuity:wick}) which allow one to represent strict quantizations as algebras of operators on a (pre)Hilbert space.

\section{Star products}
\label{sec:starproducts}

In this section we briefly recall the relevant background for star products in \S\ref{subsec:basic} and review the notion of combinatorial star products in \S\ref{subsection:combinatorial}. In \S\ref{subsec:combinatorialinvolutions} we study the compatibility of combinatorial star products with $^*$\=/involutions and in \S\ref{subsec:convergence} we prove convergence results for combinatorial star products used for continuity estimates in \S\ref{sec:continuity}.

\subsection{Physical background and basic notions}
\label{subsec:basic}

\paragraph{Observables} Recall that a Poisson manifold $(X, \eta)$ is given by a smooth manifold $X$ and a smooth bivector field $\eta$ on $X$ whose associated Poisson bracket $\{ \argument {,} \argument \}_\eta$ satisfies the Jacobi identity.\footnote{For two smooth functions $f, g \in \Smooth (X)$, the Poisson bracket is given by $\{ f, g \}_\eta = \langle \eta, \mathrm d f \wedge \mathrm d g \rangle$, where $\langle \argument {,} \argument \rangle$ is the pairing between bivector fields and $2$-forms and $\mathrm d$ is the exterior derivative.} The algebra of smooth functions on a Poisson manifold $(X, \eta)$ can be viewed as a Poisson algebra of classical observables, where $X$ is the phase space of a classical mechanical system and the Poisson bracket $\{ \argument {,} \argument \}_\eta$ encodes the time evolution via Hamilton's equations of motion. Deformation quantization after Bayen--Flato--Frønsdal--Lichnerowicz--Sternheimer \cite{bayen} aims to produce an associative algebra of quantum observables from such a Poisson algebra of classical observables by deforming the usual pointwise commutative product of smooth functions to an associative star product $\star$. The time evolution of quantum observables is governed by Heisenberg's equation of motion
\begin{equation*}
\frac{\mathrm d}{\mathrm d \tau} A (\tau) = \frac{1}{\I \hbar} [H, A (\tau)]
\end{equation*}
where $A (\tau)$ is a time-dependent quantum observable and $H$ is the Hamiltonian operator. In the setting of deformation quantization, $[\argument {,} \argument]$ should be interpreted as the commutator of the formal star product $[A, B] = A \star B - B \star A$ and the (reduced) Planck constant $\hbar$ should be replaced by the formal deformation parameter which we usually denote by $t$. Actual quantum mechanical observables are typically represented as operators on a (pre)Hilbert space. (See \cite{waldmann} for more details.)

\paragraph{Involutions} In the classical setting, the observable algebra consists of complex-valued smooth functions and the \emph{physical observables} are the real-valued functions, i.e.\ the complex-valued functions invariant under complex conjugation, whereas in the quantum mechanical setting, the physical observables are the self-adjoint operators. Complex conjugation and taking adjoints are consolidated in the notion of a \emph{$^*$\=/involution}. Recall that a \emph{$^*$\=/algebra} $A$ is an algebra over $\mathbb C$ with a $\mathbb C$-antilinear involution $^* \colon A \to A$ which satisfies $(ab)^* = b^* a^*$ for all $a,b \in A$. An ideal $I \subset A$ is called a \emph{$^*$\=/ideal} if $I^* = I$. Examples of $^*$\=/algebras are the algebra of complex-valued polynomial functions $\Pol(\mathbb R^d)$ with complex conjugation as $^*$\=/involution or the algebra of adjointable operators on a (pre)Hilbert space.

We will follow the approach of deformation quantization and we thus work with {\it complex-valued functions}. Throughout the article we use the following notation.

\begin{notation}
\label{notations:algebras}
Consider the following function spaces:
\begin{align*}
\Smooth (X) &= \Smooth (X, \mathbb R) \otimes \mathbb C \quad \text{smooth complex-valued functions on a real manifold $X$} \\
\Ana (\mathbb R^d) &\phantom{{}={}} \text{analytic complex-valued functions on $\mathbb R^d$ with infinite radius of convergence} \\
\Pol (\mathbb R^d) &\simeq \mathbb C [x_1, \dotsc, x_d] \quad \text{complex-valued polynomial functions on $\mathbb R^d$} \\
\Hol (M) &\phantom{{}={}} \text{holomorphic functions on a complex manifold $M$.}
\end{align*}
Note that $\Pol (\mathbb R^d) \subset \Ana (\mathbb R^d) \subset \Smooth (\mathbb R^d)$ and $\Hol (\mathbb C^d) \simeq \Ana (\mathbb R^d)$ by restriction. All of these are infinite-dimensional $\mathbb C$-vector spaces which admit a commutative algebra structure given by the usual pointwise multiplication of functions and a $^*$\=/involution given by pointwise complex conjugation, or in case of $\Hol(\mathbb C^d)$ the $^*$\=/involution induced by the complex conjugation on $\Ana(\mathbb R^d)$. But by themselves we often refer to them as ``function spaces'' to emphasize that we consider them also with other non-commutative multiplications obtained as quantizations of various Poisson structures. (In \S\ref{sec:continuity} we also obtain other function spaces as completions of $\Pol (\mathbb R^d)$ with respect to various locally convex topologies.)
\end{notation}

\paragraph{Star products} We briefly recall the notion of a formal star product and refer to \cite{cattaneo,kontsevich1,waldmann} for more background. 

\begin{definition} \label{definition:formalStarProduct}
Let $(X, \eta)$ be a Poisson manifold. A {\it formal star product} (or {\it formal deformation quantization}) for $(X, \eta)$ is a $\mathbb C \llrr{t}$-bilinear multiplication
\[
\star \colon \Smooth (X) \llrr{t} \times \Smooth (X) \llrr{t} \to \Smooth (X) \llrr{t}
\]
given by
\[
f \star g = fg + \sum_{n \geq 1} t^n B_n (f, g)
\]
where $B_n$ are $\mathbb C \llrr{t}$-bilinear extensions of $\mathbb C$-bilinear maps $\Smooth (X) \times \Smooth (X) \to \Smooth (X)$, satisfying the following properties:
\begin{enumerate}
\item $\star$ is associative
\item $B_1 (f, g) - B_1 (g, f) = \I \{ f, g \}_\eta$
\item $1 \star f = f = f \star 1$.
\end{enumerate}
Further, $\star$ is a {\it differential} star product if the bilinear maps $B_n$ are extensions of bidifferential operators.
\end{definition}

In other words, a formal star product is a formal one-parameter associative deformation (in the sense of Gerstenhaber \cite{gerstenhaber}) of the commutative product on the algebra $\Smooth (X)$ of classical observables, where the base of deformation is the complete local Noetherian algebra $\mathbb C \llrr{t}$ with maximal ideal $(t)$. The formal deformation parameter $t$ stands in for the (reduced) Planck constant $\hbar$ and $t$ should be evaluated to this constant where possible. However, in the generality of Definition \ref{definition:formalStarProduct} this is {\it not} possible unless the Poisson structure vanishes identically (see e.g.\ \cite[\S 6.1.1]{waldmann}) and a large part of this article is devoted to developing a framework for making sense of the evaluation $t \mapsto \hbar$, by working with combinatorial star products.

\paragraph{Fréchet algebras} Recall that a Fréchet space is a Hausdorff locally convex topological vector space which is complete and whose topology can be induced by a countable family of seminorms \cite{fragoulopoulou}. A {\it Fréchet algebra} is an associative algebra $(A, \star)$, where $A$ is a Fréchet space and the multiplication $\star \colon A \times A \to A$ is jointly continuous, i.e.\ if $f_n \to f$ and $g_n \to g$ in the Fréchet topology on $A$, then $f_n \star g_n \to f \star g$.

Fréchet spaces are a natural generalization of the notion of Banach spaces, whose topology is induced by a single norm. Normed algebras such as Banach algebras cannot contain elements $x, p$ satisfying the ``canonical commutation relations'' $[x, p] = \I \hbar$, which express Heisenberg's uncertainty principle and are at the heart of quantum mechanics. Fréchet algebras provide a natural generalization which do not impose this restriction.

All the strict deformation quantizations we obtain in this article are Fréchet algebras, obtained from completing the space $\Pol (\mathbb R^d)$ of polynomial functions, endowed with a non-commutative associative multiplication $\star_\hbar$, with respect to certain locally convex topologies.

\paragraph{Strict deformation quantization} Just as there are many approaches to quantization, there are many approaches to strict quantization \cite{bieliavskygayral,bordemannmeinrenkenschlichenmaier,landsman,natsumenestpeter,rieffel,sheu,waldmann1} (see also \cite[\S 2]{hawkins} for an overview). For the purposes of this article we shall work with the following notion.

\begin{definition}
\label{definition:strict}
Let $\eta$ be a Poisson structure on $\mathbb R^d$. By a \emph{strict deformation quantization} of $(\mathbb R^d, \eta)$ we shall mean a family $\{ A_\hbar \}_{\hbar \in [0, \epsilon)}$ of Fréchet algebras $A_\hbar = (A, \star_\hbar)$ defined on a common underlying Fréchet space $A$ satisfying
\begin{enumerate}
\item $\Pol (\mathbb R^d) \subset A_0 \subset \Smooth (\mathbb R^d)$ as commutative algebras (in particular $\star_0$ is the usual commutative product of functions)
\item the subspace $A \subset \Smooth(\mathbb R^d)$ is closed under the Poisson bracket 
\item \label{strict3} for fixed $f, g \in A$, the maps $\hbar \mapsto f \star_\hbar g$ are continuous
\item \label{strict4} for fixed $f, g \in A$, we have that $\frac{1}{\I \hbar} (f \star_\hbar g - g \star_\hbar f) \to \{ f, g \}_\eta$ as $\hbar \to 0$.
\end{enumerate}
\end{definition}

This definition should be viewed as a working definition geared towards strict quantizations of polynomial Poisson structures on $\mathbb R^d$, rather than a general notion of strict (deformation) quantization covering the various notions that appear in the literature. Let us therefore briefly motivate this definition and put it into context. Firstly, the physical value of $\hbar$ is a positive constant and the quantization should be ``well-behaved'' in the limit $\hbar \to 0$, the so-called \emph{classical limit}. We thus take a strict deformation quantization to be given by a family indexed by $[0, \epsilon)$ and require continuity of $\hbar \mapsto f \star_\hbar g$ with respect to the Fréchet topology of $A$. In other contexts one may relax this assumption and work with a family of algebras indexed by a set of real numbers which is only assumed to have $0$ as an accumulation point (see e.g.\ \cite{bordemannmeinrenkenschlichenmaier}). It turns out that our examples often satisfy an even stronger condition: $\hbar$ may be evaluated to any complex value in the closed upper half-plane $\{ \hbar \in \mathbb C \mid \operatorname{Im} (\hbar) \geq 0 \}$ and for fixed $f, g \in A$, their star product $f \star_\hbar g$ even depends holomorphically on $\hbar$ in the open upper half-plane with continuity not only on $[0, \epsilon)$ but on the whole real line (see \S\ref{subsec:convergence}).

To ensure that the algebras contain a reasonably large number of functions, we require that $A_0$ contain the algebra $\Pol (\mathbb R^d)$ of polynomial functions, although here one might also choose to replace $\Pol (\mathbb R^d)$ by any other preferred class of functions. (Note that on a general Poisson manifold, there is no reasonable notion of ``polynomial functions'', so in general one indeed has to work with other function classes.)

\begin{remark}[Strict quantizations in the C$^*$\=/algebraic setting]
Although Definition \ref{definition:strict} closely parallels other common definitions of strict quantizations, in the C$^*$\=/algebraic setting essentially all of the conditions in the definition are altered slightly (see e.g.~\cite[Def.~9.2]{rieffel}). Instead of a family of general Fréchet algebras, one considers a (continuous) ``field of C$^*$\=/algebras'' $\{ B_\hbar \}_{\hbar \in [0, \epsilon)}$ \cite{dixmier}. The individual algebras $B_\hbar$ typically arise as completions of a fixed vector space $A$ with respect to different C$^*$\=/norms, so that the completed algebras $B_\hbar$ usually do not all have the same underlying vector space. Accordingly, the continuity properties in \refitem{strict3} and \refitem{strict4} are then only imposed for elements in this fixed vector subspace. Lastly, this fixed vector subspace is never the algebra of polynomial functions, as the individual algebras $B_\hbar$ contain elements associated to bounded and continuous (but not necessarily smooth) functions, and polynomial functions are not bounded. 
\end{remark}

\subsection{Combinatorial star products}
\label{subsection:combinatorial}

Convergence and continuity results used in the construction of strict deformation quantizations usually arise from concrete formulae for formal star products. In order to obtain explicit formulae, we work with the combinatorial star products introduced in \cite{barmeierwang}. We now give a brief review of these star products and in \S\ref{subsec:combinatorialinvolutions} also consider variants which are compatible with $^*$\=/involutions.

The construction of combinatorial star products can be described as follows. Let
\[
A = \mathbb C \langle x_1, \dotsc, x_d \rangle / (x_j x_i - x_i x_j)_{1 \leq i < j \leq d} \simeq \mathbb C [x_1, \dotsc, x_d] \simeq \Pol (\mathbb R^d)
\]
where $\mathbb C \langle x_1, \dotsc, x_d \rangle$ is the free algebra generated by $x_1, \dotsc, x_d$ and $(x_j x_i - x_i x_j)_{1 \leq i < j \leq d}$ denotes the two-sided ideal generated by the commutativity relations. Choose the basis $\{ x^K \}_{K \in \mathbb N_0^d}$, where we use the multi-index notation $x^K = x_1^{K_1} x_2^{K_2} \dots x_d^{K_d}$ for $K = (K_1, \dotsc, K_d) \in \mathbb N_0^d$ and $\lvert K \rvert = K_1 + \dotsb + K_d$.

\begin{definition}[{\cite[Def.~7.18]{barmeierwang}}]
\label{definition:combinatorial}
Associated to any element $\widetilde \phi \in \Hom (\mathbb C \{ x_j x_i \}_{1 \leq i < j \leq d}, A) \hatotimes (t)$ viewed as a formal series $\widetilde \phi = \widetilde \phi_1 t + \widetilde \phi_2 t^2 + \dotsb$ of $\mathbb C$-linear maps, we define a {\it combinatorial star product} as the $\mathbb C \llrr{t}$-bilinear operation
\[
\star \colon A \llrr{t} \times A \llrr{t} \to A \llrr{t}
\]
where $x^K \star x^L$ is the result of recursively reordering the monomial $x^K x^L$ (viewed as an element in $\mathbb C \langle x_1, \dotsc, x_d \rangle$), starting from the right, by replacing each occurrence of $x_j x_i$ for $i < j$ by $x_i x_j + \widetilde \phi (x_j x_i)$\footnote{This recursive reordering is well-defined at all orders of $t$ since $\widetilde \phi (x_j x_i)$ is a formal power series starting in first order in $t$.}, where the terms appearing in each $\widetilde \phi_n (x_j x_i)$ for $n \geq 1$ are expressed in the form $x_1^{J_1} x_2^{J_2} \dots x_d^{J_d}$.
\end{definition}

\begin{remark}
\label{remark:reduction}
This recursive reordering can be formalized and generalized to arbitrary finitely generated algebras using the notion of a so-called {\it reduction system}, used by G.M.~Bergman to prove the Diamond Lemma \cite{bergman}. In this context, the operation of replacing $x_j x_i$ by $x_i x_j + \widetilde \phi (x_j x_i)$ is called a \emph{reduction}. Indeed, the deformation theory of any finitely generated algebra is equivalent to the deformation theory of any suitable reduction system \cite{barmeierwang}.
\end{remark}

For general $\widetilde \phi$, the operation $\star$ given in Definition \ref{definition:combinatorial} need not be associative. However, $\widetilde \phi = \widetilde \phi_1 t + \widetilde \phi_2 t^2 + \cdots$ can be viewed as a formal power series of degree $2$ elements in the Koszul cochain complex $\mathrm K^\bullet = \Hom (\mathrm K_\bullet, A)$, where
\[
\mathrm K_m = \mathbb C \{ x_{i_m} \dots x_{i_2} x_{i_1} \}_{1 \leq i_1 < \dotsb < i_m \leq d} \simeq \mathbb C^{\binom d m} \punkt
\]
Since $\mathrm K^\bullet$ has trivial differential and computes the Hochschild cohomology of $A$, its suspension $\mathrm K^{\bullet+1}$ admits a natural L$_\infty$ algebra structure $(\mathrm K^{\bullet + 1}, \langle \argument {,} \argument \rangle, \langle \argument {,} \argument {,} \argument \rangle, \dotsc)$ by viewing it as a {\it minimal model} (see e.g.\ \cite[\S 4.5.1]{kontsevich1} or \cite[\S 4]{manetti}) for the Hochschild cochain complex $(\Hom (A^{\otimes \bullet + 1}, A), d, [\argument {,} \argument])$ endowed with its DG Lie algebra structure given by the Hochschild differential and the Gerstenhaber bracket. An explicit L$_\infty$ quasi-isomorphism $(\mathrm K^{\bullet + 1}, \langle \argument {,} \argument \rangle, \langle \argument {,} \argument {,} \argument \rangle, \dotsc) \to (\Hom (A^{\otimes \bullet + 1}, A), d, [\argument {,} \argument])$ was constructed in \cite{barmeierwang} in a more general context via homotopy transfer. The resulting L$_\infty$ algebra structure on $\mathrm K^{\bullet + 1}$ and the associativity of $\star$ are related as follows.

\begin{theorem}[\cite{barmeierwang}]
\label{theorem:equivalence}
Let $A = \mathbb C [x_1, \dotsc, x_d]$ and $\widetilde \phi \in \mathrm K^2 \hatotimes (t)$. Then the following are equivalent:
\begin{enumerate}
\item $\widetilde \phi$ is a Maurer--Cartan element of $\mathrm K^{\bullet + 1} \hatotimes (t)$.
\item $\star$ is associative.
\item \label{equiv3} $(x_k \star x_j) \star x_i = x_k \star (x_j \star x_i)$ for all $1 \leq i < j < k \leq d$.
\item $\widetilde A = \mathbb C \langle x_1, \dotsc, x_d \rangle \llrr{t} / (x_j x_i - x_i x_j - \widetilde \phi (x_j x_i))_{1 \leq i < j \leq d}$ is a formal deformation of $A$. \qedhere
\end{enumerate}
\end{theorem}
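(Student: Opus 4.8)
The plan is to establish the chain $(2)\Leftrightarrow(3)\Leftrightarrow(4)$ by Bergman's Diamond Lemma applied to the reduction system underlying $\widetilde\phi$, and then $(1)\Leftrightarrow(4)$ by transporting Maurer--Cartan elements along the explicit $\mathrm L_\infty$ quasi-isomorphism of \cite{barmeierwang}; conceptually the whole statement is the specialization of the general reduction-system framework of that paper to $A=\mathbb C[x_1,\dots,x_d]$ with the commutativity relations (cf.\ Remark \ref{remark:reduction}).

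For the Diamond Lemma part I would first record that $\widetilde A$ is presented by the reduction system $R$ on monomials in $x_1,\dots,x_d$ whose reductions are exactly the replacements of $x_jx_i$ (for $i<j$) by $x_ix_j+\widetilde\phi(x_jx_i)$ that define $\star$; thus the operation $x^K\star x^L$ of Definition \ref{definition:combinatorial} is, by construction, one particular sequence of reductions taking $x^Kx^L$ to an $R$-irreducible element (a $\mathbb C\llrr t$-combination of monomials $x^J$). To invoke the Diamond Lemma one needs a well-founded order compatible with $R$; since every $\widetilde\phi_n$ carries a factor $t^n$ with $n\ge1$, one works order by order in the $(t)$-adic filtration, where modulo $t$ the reductions are the sorting reductions $x_jx_i\mapsto x_ix_j$ of the commutative monoid and strictly decrease the number of inversions, while the higher-order corrections only produce terms of strictly higher $t$-degree. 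A short inspection shows there are no inclusion ambiguities (all left-hand sides have length two) and that the overlap ambiguities are precisely the words $x_kx_jx_i$ with $1\le i<j<k\le d$. Bergman's lemma then gives: $R$ is confluent $\iff$ these ambiguities are resolvable $\iff$ $(x_k\star x_j)\star x_i=x_k\star(x_j\star x_i)$ for all $i<j<k$, and confluence is in turn equivalent to associativity of $\star$ (any two reduction sequences from $x^Kx^L$ produce the same value). This yields $(2)\Leftrightarrow(3)$. For $(2)\Leftrightarrow(4)$ one uses the other half of the Diamond Lemma: $R$ is confluent iff the $R$-irreducible monomials $\{x^K\}_{K\in\mathbb N_0^d}$ form a $\mathbb C\llrr t$-basis of $\widetilde A$; since $\widetilde A/(t)\cong A$ and $\{x^K\}$ is also a basis of $A$, this freeness is exactly the statement that $\widetilde A$ is a flat --- hence formal --- deformation of $A$, and in that case the induced product on $\bigoplus_K\mathbb C\llrr t\,x^K$ is $\star$.

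For $(1)\Leftrightarrow(4)$ I would argue via the $\mathrm L_\infty$ picture recalled just before the theorem. The Koszul cochain complex $\mathrm K^\bullet=\Hom(\mathrm K_\bullet,A)$ has zero differential and is a minimal model of the Hochschild DG Lie algebra $(\Hom(A^{\otimes\bullet+1},A),d,[\argument {,} \argument])$, and \cite{barmeierwang} constructs by homotopy transfer an explicit $\mathrm L_\infty$ quasi-isomorphism $\mathrm K^{\bullet+1}\to\Hom(A^{\otimes\bullet+1},A)$. The Maurer--Cartan equation for $\widetilde\phi\in\mathrm K^2\hatotimes(t)$ --- which, since the differential on $\mathrm K^{\bullet+1}$ vanishes, involves only the higher brackets $\langle\argument {,} \argument\rangle,\langle\argument {,} \argument {,} \argument\rangle,\dotsc$ evaluated on copies of $\widetilde\phi$ --- is then carried by this morphism to the Maurer--Cartan equation in the Hochschild DG Lie algebra, because an $\mathrm L_\infty$ morphism sends Maurer--Cartan elements to Maurer--Cartan elements and, being a quasi-isomorphism, induces a bijection on Maurer--Cartan moduli. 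It therefore suffices to check that the morphism carries $\widetilde\phi$ to the Hochschild $2$-cochain whose associated associative product is $\star$ (equivalently, that defines $\widetilde A$), which is precisely how the homotopy transfer is arranged in \cite{barmeierwang}; hence $\widetilde\phi$ is Maurer--Cartan iff $\star$ is associative iff $\widetilde A$ is a formal deformation.

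The main obstacle I expect is technical rather than conceptual: carefully verifying that the termination and compatible-order hypotheses of Bergman's Diamond Lemma hold in the $t$-adic power-series setting (so that one may genuinely run the reductions order by order in $t$), and matching the ``recursive reordering from the right'' of Definition \ref{definition:combinatorial} with the normal-form map of $R$ so that associativity of $\star$ becomes literally equivalent to confluence of $R$. The $\mathrm L_\infty$ half is essentially a matter of quoting \cite{barmeierwang} with $A=\mathbb C[x_1,\dots,x_d]$.
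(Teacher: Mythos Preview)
The paper does not give its own proof of this theorem: it is stated with the citation \cite{barmeierwang} and no argument, so there is nothing in the present paper to compare your proposal against beyond the surrounding discussion (Remark~\ref{remark:reduction} and the paragraph on the $\mathrm L_\infty$ structure preceding the theorem). Your Diamond Lemma argument for $(2)\Leftrightarrow(3)\Leftrightarrow(4)$ is exactly the mechanism the paper alludes to in Remark~\ref{remark:reduction}, and your identification of the overlap ambiguities with the triples $x_kx_jx_i$ is correct.

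There is one genuine soft spot in your $(1)\Leftrightarrow(2)$ step. You argue that an $\mathrm L_\infty$ quasi-isomorphism ``induces a bijection on Maurer--Cartan moduli'' and conclude that $\widetilde\phi$ is MC iff its image is. But a bijection on gauge-equivalence classes does not by itself give a bijection on MC \emph{elements}: from associativity of $\star$ (i.e.\ the image being MC) you cannot deduce that $\widetilde\phi$ is MC without further input. The way this is actually set up in \cite{barmeierwang}, and what the paper hints at when it says the $\mathrm L_\infty$ brackets on $\mathrm K^{\bullet+1}$ are obtained by homotopy transfer, is more direct: the curvature $\sum_{n\ge2}\tfrac1{n!}\langle\widetilde\phi,\dotsc,\widetilde\phi\rangle$ is an element of $\mathrm K^3\hatotimes(t)$, and $\mathrm K^3$ has basis indexed precisely by the overlaps $x_kx_jx_i$ with $i<j<k$. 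The transferred brackets are constructed so that the $x_kx_jx_i$-component of this curvature equals $(x_k\star x_j)\star x_i - x_k\star(x_j\star x_i)$. This yields $(1)\Leftrightarrow(3)$ on the nose, without passing through moduli. If you want to keep your formulation, you should instead invoke that homotopy transfer produces an $\mathrm L_\infty$ quasi-inverse as well, and check that $\widetilde\phi$ is gauge-fixed (it lies in the minimal model), so that the moduli bijection upgrades to the statement you need.
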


Note that if the equivalent conditions in the previous theorem are satisfied, then the $\mathbb C \llrr{t}$-linear map
\begin{equation}
\label{rho}
\begin{tikzpicture}[baseline=-2.6pt,description/.style={fill=white,inner sep=2pt}]
\matrix (m) [matrix of math nodes, row sep=.25em, text height=1.5ex, column sep=0em, text depth=0.25ex, ampersand replacement=\&, column 3/.style={anchor=base west}, column 1/.style={anchor=base east}]
{
	\rho \colon \&[-.7em] \mathbb C [x_1, \dotsc, x_d] \llrr{t} \&[2em] \widetilde A \\
	\& x^K \& x^K \\
};
\path[->,line width=.4pt,font=\scriptsize]
(m-1-2) edge (m-1-3)
;
\path[|->,line width=.4pt]
(m-2-2) edge (m-2-3)
;
\end{tikzpicture}
\end{equation}
is a vector space isomorphism and $\rho$ intertwines the combinatorial star product with the product $\cdot$ of the quotient $\widetilde A$, meaning that
\begin{equation}
f \star g = \rho^{-1} (\rho (f) \cdot \rho (g)) \punkt
\end{equation}
Indeed, performing reductions $x_j x_i \mapsto x_i x_j + \widetilde \phi (x_j x_i)$ corresponds precisely to expressing the product of the quotient algebra $\widetilde A = \mathbb C \langle x_1, \dotsc, x_d \rangle
 \llrr{t} / (x_j x_i - x_i x_j - \widetilde \phi (x_j x_i))_{1 \leq i < j \leq d}$ in the basis $\{ x^K \}_{K \in \mathbb N_0^d}$. In particular, it follows that the order of reductions does not matter.

Given any $\widetilde \phi = \widetilde \phi_1 t + \widetilde \phi_2 t^2 + \dotsb \in \mathrm K^2 \hatotimes (t)$ satisfying the equivalent conditions of Theorem \ref{theorem:equivalence}, its first-order term $\widetilde \phi_1 \in \mathrm K^2$ defines a Poisson structure $\eta$ by
\[
\{ x_j, x_i \}_\eta = \frac{1}{\I} \widetilde \phi_1 (x_j x_i)
\]
for any $1 \leq i < j \leq d$, where the Jacobi identity follows from the associativity of $\star$ in order $t^2$. (The factor of $1/\I$ is a matter of convention and could be omitted, but it naturally appears when $\eta$ is to be a real Poisson structure which is quantized by the combinatorial star product $\star$ in the sense of Definition~\ref{definition:formalStarProduct}.)

Since $\mathrm K^{\bullet + 1}$ is L$_\infty$-quasi-isomorphic to $\Hom (A^{\otimes \bullet + 1}, A)$, any star product quantizing a polynomial Poisson structure $\eta$ is equivalent to a combinatorial star product for some $\widetilde \phi \in \mathrm K^2 \hatotimes (t)$. Indeed, one may construct such a combinatorial star product by defining $\widetilde \phi$ by
\begin{equation}
\label{phi}
\widetilde \phi (x_j x_i) = \I \{ x_j, x_i \}_\eta t + \mathrm O (t^2)
\end{equation}
where the higher-order terms in $t$ are chosen such that $\star$ satisfies the associativity condition on linear monomials as in Theorem \ref{theorem:equivalence} \refitem{equiv3}. The choice of higher-order terms is far from unique --- for certain Poisson structures the higher-order terms may be chosen to be zero, but as we shall see, the choice also affects the convergence and continuity properties of the resulting star product. In \S\ref{subsec:convergence} we will illustrate in several examples how to use the combinatorial star product in practice to obtain explicit formulae for star products and study their convergence properties.

A first simple example of the condition in Theorem \ref{theorem:equivalence} \refitem{equiv3} is the following.

\begin{example}[Quantization of the log-canonical Poisson structure]
\label{example:poissonStructure:logcanonicald}
Let $\eta$ be the \emph{log-canonical Poisson structure}\footnote{The name ``log-canonical'' derives from the observation that $\ln x_1, \dotsc, \ln x_d$ are ``canonical'' coordinates in the sense that $\{ \ln x_j, \ln x_i \} = 1$.} on $\mathbb R^d$, which is determined by $\{ x_j, x_i \}_\eta = x_i x_j$ for any $1 \leq i < j \leq d$. Then for any formal power series $q = 1 + \I t + \mathrm O (t^2) \in \mathbb C \llrr{t}$ we may set
\[
x_j \star x_i = 
\begin{cases}
x_j x_i & \text{if } i \geq j \\
q x_i x_j & \text{if } i < j \punkt
\end{cases}
\]
(In the notation of Definition \ref{definition:combinatorial}, $\star$ is the combinatorial star product associated to the element $\widetilde \phi$ given by $\widetilde \phi (x_j x_i) = (q - 1) x_i x_j$.) 
Then $\star$ satisfies the condition of Theorem \ref{theorem:equivalence} \refitem{equiv3} as
\[
x_k \star (x_j \star x_i) = q^3 x_i x_j x_k = (x_k \star x_j) \star x_i
\]
for any $1 \leq i < j < k \leq d$, where $q^3$ appears since exactly three reductions are needed to bring $x_k x_j x_i$ into standard form ($x_k$ needs to move past both $x_j$ and $x_i$, and $x_j$ past $x_i$), each reduction contributing a factor of $q$.

By Theorem \ref{theorem:equivalence} it follows that $\star$ is associative for all polynomials in $\Pol (\mathbb R^d)$ and thus quantizes $\eta$. (Indeed, $\star$ can be defined on $\Smooth (\mathbb R^d) \llrr{t}$ as in Definition \ref{definition:formalStarProduct}, see Remark \ref{remark:smooth} below.) An explicit formula for the star product of arbitrary monomials is
\begin{equation} \label{eq:locCanonicalPoissonStructure:formulaForStarInRn}
	x^K \star x^L = q^{\sum_{1 \leq i < j \leq d} K_j L_i} x^{K+L}
\end{equation}
for all multi-indices $K, L \in \mathbb N_0^d$, since a straightforward combinatorial argument shows that $\sum_{1 \leq i < j \leq d} K_j L_i$ many reductions are needed to bring $x^K x^L$ into standard form.
\end{example}

The combinatorial star product was defined via a recursive reordering reminiscent of the so-called standard ordering of differential operators. Indeed, consider $\mathbb R^{2d}$ viewed as the cotangent bundle of $\mathbb R^d$ with its canonical symplectic structure, the first $d$ coordinate functions corresponding to position and the last $d$ to momentum variables. This symplectic structure defines a constant Poisson structure and setting $\widetilde \phi (x_j x_i) = \delta_{i+d,j} t$, the associated combinatorial star product is associative and coincides with the standard-ordered Weyl product $\star^{\mathrm{std}}$, ordering position operators to the left and momentum operators to the right.

The standard-ordered Weyl product is a differential star product (cf.\ Definition \ref{definition:formalStarProduct}) which can be given by bidifferential operators associated to graphs. By a careful analysis of the reductions, one can show that {\it any} combinatorial star product can be given by a graphical formula.

\begin{theorem}[\cite{barmeierwang}]
\label{theorem:graphical}
Let $\star$ be a combinatorial star product satisfying the equivalent conditions of Theorem~\ref{theorem:equivalence}. 
Then $\star$ can be given by the graphical formula
\begin{equation} \label{eq:graphical}
f \star g = \sum_{n \geq 0} \sum_{\Gamma \in \mathfrak G_{n, 2}} C_\Gamma (f, g)
\end{equation}
where $\mathfrak G_{n,2}$ is a set of admissible graphs for the combinatorial star product and $C_\Gamma$, associated to an admissible graph $\Gamma \in \mathfrak G_{n, 2}$, is a formal power series of bidifferential operators starting in order $t^n$.
\end{theorem}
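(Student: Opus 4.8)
The plan is as follows. By $\mathbb C\llrr t$-bilinearity it is enough to produce such a formula for $x^K\star x^L$ with $K,L\in\mathbb N_0^d$, and by Theorem~\ref{theorem:equivalence} --- more precisely the intertwining property of the isomorphism $\rho$ in \eqref{rho}, which makes the order of reductions irrelevant --- we may carry out the reductions in whatever order is most convenient for the bookkeeping. The first step is to expand $\widetilde\phi(x_jx_i)=\sum_M\phi^{ji}_M(t)\,x^M$ with $\phi^{ji}_M\in(t)\,\mathbb C\llrr t$, only finitely many nonzero in each order of $t$ since each $\widetilde\phi_n$ takes values in $A=\mathbb C[x_1,\dotsc,x_d]$. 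Every reduction $x_jx_i\mapsto x_ix_j+\widetilde\phi(x_jx_i)$ that retains the ``$\widetilde\phi$''-term spawns a new group of letters $x^M$ which must in turn be reordered against the remaining letters, so the algorithm of Definition~\ref{definition:combinatorial} applied to $x^Kx^L$ is naturally organized as a tree-like recursion. The idea is then to run this algorithm with the individual letters of $x^K$ and $x^L$ treated as \emph{distinguishable}, to record each resulting term by a decorated graph, and finally to re-identify indistinguishable letters.

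Concretely, I would define $\mathfrak G_{n,2}$ to consist of graphs with two ground vertices carrying the arguments $f$ and $g$, together with $n$ internal (``aerial'') vertices, one for each reduction that contributed a $\widetilde\phi$-term; each aerial vertex is decorated by a pair $i<j$ and a multi-index $M$ occurring in $\widetilde\phi(x_jx_i)$, and carries two index-coloured outgoing edges pointing to the two (ground or earlier aerial) vertices whose letters were adjacent when that reduction fired, subject to the ordering constraints dictated by the fact that a reduction only acts on an adjacent pair $x_jx_i$ with $j>i$. A \emph{reduction history} of $x^Kx^L$ is then a graph $\Gamma\in\mathfrak G_{n,2}$ together with an assignment of its edge-endpoints on the two ground vertices to actual letters of $x^K$ and $x^L$; the monomial it produces has exponent vector $K+L$ minus a vector $c'_\Gamma$ depending only on $\Gamma$ (consumed original letters versus surviving spawned letters), and the scalar it contributes is the corresponding product of the $\phi^{ji}_M$'s.

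The heart of the matter is a counting lemma: for a fixed $\Gamma\in\mathfrak G_{n,2}$, summing over all admissible assignments of its ground-vertex edges to letters of $x^K$ and $x^L$, the number of reduction histories of shape $\Gamma$ equals $\prod_{i=1}^d (K_i)_{a_i(\Gamma)}(L_i)_{b_i(\Gamma)}$ --- where $(m)_a=m(m-1)\cdots(m-a+1)$ and $a_i(\Gamma)$, $b_i(\Gamma)$ are the numbers of edges of colour $i$ landing on the $f$- and the $g$-ground vertex --- up to a rational factor depending only on $\Gamma$, and in particular it vanishes unless $K_i\ge a_i(\Gamma)$ and $L_i\ge b_i(\Gamma)$ for all $i$. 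I would prove this by a careful induction (e.g.\ on $n$, or on $\abs K+\abs L$), peeling off one reduction: summed over the choices of which not-yet-used letters that reduction involves, it multiplies the count by exactly the falling-factorial increments in the $a_i,b_i$ (a new edge of colour $i$ to the $f$-ground vertex may be attached to any of the remaining index-$i$ letters of $x^K$, and likewise for $g$). Since $(K_i)_{a_i}\,x^{K-a_ie_i}=\partial_i^{a_i}x^K$, collecting terms then yields
\[
x^K\star x^L=\sum_{n\ge0}\sum_{\Gamma\in\mathfrak G_{n,2}}C_\Gamma(x^K,x^L),\qquad C_\Gamma(f,g)=w_\Gamma(t)\,x^{c_\Gamma}\,\partial^{a_\Gamma}f\,\partial^{b_\Gamma}g\punkt
\]
Here $a_\Gamma,b_\Gamma,c_\Gamma\in\mathbb N_0^d$ are determined by $\Gamma$ and $w_\Gamma\in t^n\,\mathbb C\llrr t$ is the product of the relevant $\phi^{ji}_M$ times the rational factor above, so $C_\Gamma$ is a power series of bidifferential operators starting in order $t^n$; the double sum is well defined at each order in $t$ because only graphs with $n\le N$ aerial vertices, and for each only finitely many decorations, contribute modulo $t^{N+1}$.

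The step I expect to be the real obstacle is making the dictionary ``reduction history $\leftrightarrow$ admissible graph'' genuinely precise: pinning down the admissibility and ordering conditions on the edges so that $\mathfrak G_{n,2}$ indexes exactly the terms that occur (no more, no fewer), and then checking that the falling-factorial count survives the recursion once aerial vertices start spawning fresh letters on which later reductions act. For the latter one must verify that such a fresh letter, say of index $i$ coming from a monomial $x^M$, only ever contributes a bounded multiplicity (at most $M_i$, which is already folded into the finite choice of $M$ attached to that aerial vertex) and never reintroduces $K$- or $L$-dependence beyond the derivatives already accounted for. Everything else is bookkeeping; as a sanity check, for $\widetilde\phi(x_jx_i)=\delta_{i+d,j}t$ this construction reproduces the standard-ordered Weyl product $\star^{\mathrm{std}}$ discussed just above, with its familiar graphs and weights $t^n/n!$, the general case being the same combinatorics carried out for an arbitrary $\widetilde\phi$.
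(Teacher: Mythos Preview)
The paper does not prove this theorem. It is quoted from \cite{barmeierwang} and no argument is given here; the only additional information in the present paper is Remark~\ref{remark:smooth}, which records that the admissible graphs $\mathfrak G_{n,2}$ are precisely the Kontsevich graphs without oriented cycles, equipped with a linear order on the incoming edges at each vertex, and refers to \cite[\S10]{barmeierwang} for details. So there is nothing in this paper against which to compare your proposal.

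That said, your sketch is broadly on the right track and consistent with what the paper hints at: organizing reduction histories as decorated graphs with two ground vertices and $n$ aerial vertices, and recovering bidifferential operators via the falling-factorial identity $(K_i)_{a_i}\,x^{K-a_ie_i}=\partial_i^{a_i}x^K$, is exactly the mechanism one expects. Your description of the aerial vertices (two outgoing edges, no cycles forced by the temporal ordering of reductions) matches the characterization ``Kontsevich graphs without oriented cycles'' recorded in Remark~\ref{remark:smooth}. The honest caveats you flag --- making the bijection between reduction histories and admissible graphs precise, and checking that letters spawned by $\widetilde\phi$ at aerial vertices contribute only $\Gamma$-dependent (not $K,L$-dependent) multiplicities --- are indeed where the work lies, and are presumably what \cite[\S10]{barmeierwang} carries out. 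If you want to turn your outline into a self-contained proof you will need to consult that reference; within the present paper the statement is simply imported.
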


\begin{remark}
\label{remark:smooth}
The graphical formula in Theorem \ref{theorem:graphical} (see \cite[\S10]{barmeierwang} for more details) shows that the combinatorial star product is a differential star product in the sense of Definition \ref{definition:formalStarProduct}, 
since \eqref{eq:graphical} can be used to extend it to the algebra of all smooth functions $\Smooth(\mathbb R^d)$.
By a standard argument such an extension by bidifferential operators is necessarily unique if it exists:
a bidifferential operator is continuous with respect to the standard locally convex topology of $\Smooth(\mathbb R^d)$ (locally uniform convergence of all derivatives), and $\Pol (\mathbb R^d)$ is dense in $\Smooth(\mathbb R^d)$ with respect to this topology.

The graphical formula for the combinatorial star product is very similar to Kontsevich's universal quantization formula \cite[\S 2]{kontsevich1}. In fact, the admissible graphs for the combinatorial star product are precisely the Kontsevich graphs without oriented cycles, together with a linear order of the incoming edges at each vertex \cite[Prop.~10.16]{barmeierwang}. However, the formula holds without any weights and thus makes the convergence properties more accessible.
\end{remark}

\begin{remark}
The Koszul complex $\mathrm K^{\bullet + 1}$ is isomorphic to the (shifted) complex of polyvector fields (with polynomial coefficients) which admits a natural graded Lie algebra structure given by the Schouten--Nijenhuis bracket $[\argument {,} \argument]_{\mathrm{SN}}$, and Maurer--Cartan elements of $(\mathrm K^{\bullet + 1}, [\argument {,} \argument]_{\mathrm{SN}}) \hatotimes (t)$ are precisely formal Poisson structures. The binary bracket $\langle \argument {,} \argument \rangle$ of the L$_\infty$ algebra structure on $\mathrm K^{\bullet + 1}$ coincides with the Schouten--Nijenhuis bracket \cite{barmeierwang}. However, when viewed as a minimal model of $(\Hom (A^{\otimes \bullet + 1}, A), d, [\argument {,} \argument])$, the Koszul complex $\mathrm K^{\bullet + 1}$ carries nontrivial $n$-ary brackets for all $n \geq 2$.

One way of understanding the problem of quantizing a polynomial Poisson structure is to start with a Poisson structure placed in order $1$ in $t$, thus defining a Maurer--Cartan element of $(\mathrm K^{\bullet + 1}, [\argument {,} \argument]_{\mathrm{SN}}) \hatotimes (t)$, and if necessary add suitable higher-order terms to also make it a Maurer--Cartan element of the L$_\infty$ algebra $(\mathrm K^{\bullet + 1}, \mathmbox{\langle \argument {,} \argument \rangle}, \mathmbox{\langle \argument {,} \argument {,} \argument \rangle}, \dotsc) \hatotimes (t)$. In case higher-order terms are necessary (or useful for continuity estimates), the associativity of $\star$ on linear monomials as in Theorem \ref{theorem:equivalence} \refitem{equiv3} gives a convenient combinatorial criterion to establish associativity of the combinatorial star product for all smooth functions.
\end{remark}

\subsection{\texorpdfstring{Combinatorial star products and $^*$-involutions}{Combinatorial star products and *-involutions}}
\label{subsec:combinatorialinvolutions}

In \S\ref{subsec:basic} we briefly recalled the notion of a $^*$\=/involution which allows one to identify physical observables. Like the standard-ordered Weyl product, the combinatorial star product $\star$ defined in Definition~\ref{definition:combinatorial} is in general not compatible with the natural $^*$\=/involution given by complex conjugation of functions. The reason is that the linear extension of $\Pol (\mathbb R^d) \ni x^K \mapsto x_1^{K_1} \cdot \ldots \cdot x_d^{K_d} \in \mathbb C\langle x_1, \dotsc, x_{d} \rangle$ is not compatible with the $^*$\=/involutions of $\Pol (\mathbb R^d)$ respectively $\mathbb C\langle x_1, \dots, x_{d} \rangle$ obtained by extending $(x_i)^* = x_i$ in such a way that $(ab)^* = b^* a^*$ holds. Indeed,
$\big(x_1^{K_1} \cdot \ldots \cdot x_d^{K_d} \big)^* = x_d^{K_d} \cdot \ldots \cdot x_1^{K_1} \in \mathbb C\langle x_1, \dotsc, x_{d} \rangle$ 
does not agree with the image of $\big(x^K\big)^* = x^K \in \Pol(\mathbb R^d)$.

We now present two strategies for remedying this.

\subsubsection{Combinatorial star products of Wick type}
\label{subsubsection:wick}

Our first strategy is to redefine the $^*$\=/involution by setting $(x_i)^* = x_{d+1-i}$.
Of course, the $x_i$ will not be real any more (unless $d$ is odd and $i = \frac 1 2 (d+1)$) and should not be thought of as coordinate functions on $\mathbb R^d$,
which is why we will call them $w_i$ from now on.
The combinatorial star product of $w^K$ and $w^L$ is then defined just as in \S\ref{subsec:basic} by recursively reordering $w^K w^L$, 
starting from the right, replacing each occurrence of $w_j w_i$ for $i < j$ by $w_i w_j + \widetilde \phi(w_j w_i)$.
Let 
\begin{equation} \label{eq:wickDisc}
	\antidiag \coloneqq \lbrace z \in \mathbb C^d \mid z_i = \cc z_{d+1-i} \text{ for all $i=1,\dotsc,d$}\rbrace \simeq \mathbb R^d
\end{equation} 
and $w_i \colon \antidiag \to \mathbb C$, $w_i(z) = z_i$. 
Then we have indeed $(w_i)^* = w_{d+1-i}$, where ${}^*$ is now the standard complex conjugation.

If $d$ is even, we can identify $\mathbb C^{d/2}$ with $\antidiag$ via 
\[
(z_1, \dotsc, z_{d/2}) \mapsto (\cc z_1, \dotsc, \cc z_{d/2}, z_{d/2}, \dotsc, z_1)
\]
in which case $w_1, \dotsc, w_{d/2}$ become antiholomorphic coordinates and $w_{d/2+1}, \dotsc, w_d$ holomorphic coordinates. If $d$ is odd, we can identify $\mathbb C^{(d-1)/2} \oplus \mathbb R$ with $\antidiag$ via 
\[
(z_1, \dotsc, z_{(d-1)/2}, y) \mapsto (\cc z_1, \dotsc, \cc z_{(d-1)/2}, y, z_{(d-1)/2}, \dotsc, z_1)
\]
so that $w_1, \dotsc, w_{(d-1)/2}$ become antiholomorphic coordinates, $w_{(d-1)/2+2}, \dotsc, w_d$ holomorphic coordinates, and $w_{(d-1)/2+1}$ is an additional real coordinate.
With this identification, the combinatorial star product orders antiholomorphic coordinates (``creation operators'') to the left and 
holomorphic coordinates (``annihilation operators'') to the right, and we have thus obtained a combinatorial version of so-called \emph{star products of Wick type} \cite{karabegov}, \cite[\S 5.2.3]{waldmann}
(also called {\it normal-ordered} star products).

We denote by $\Pol(\antidiag)$ the unital commutative algebra generated by the functions $w_1, \dots, w_d$ on $\antidiag$.

\begin{proposition} \label{proposition:compatibilityWithStar:WickType}
Let $\widetilde \phi \in \mathrm K^2 \hatotimes (t)$ be given by $\widetilde \phi (w_j w_i) = \I \{ w_j, w_i \}_\eta t + \mathrm O (t^2)$ and assume that $\widetilde \phi$ satisfies the equivalent conditions of Theorem \ref{theorem:equivalence}. If $\widetilde \phi$ additionally satisfies
\begin{equation} \label{eq:condition:starInvolution}
\widetilde \phi (w_j w_i)^* = \widetilde \phi (w_{d-i+1} w_{d-j+1})
\end{equation}
then $\eta$ is a real Poisson structure on $\antidiag$ and $(\Pol(\antidiag) \llrr{t}, \star)$ with complex conjugation as $^*$\=/involution is a $^*$\=/algebra quantizing $\eta$.
\end{proposition}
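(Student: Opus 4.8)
The plan is to verify the two assertions in turn: first that condition \eqref{eq:condition:starInvolution} forces $\eta$ to be a real Poisson structure on $\antidiag$, and then that $^*$\=/compatibility of the star product, i.e.\ $(f \star g)^* = g^* \star f^*$ for all $f, g \in \Pol(\antidiag)\llrr t$, holds at all orders in $t$. For the first point I would extract the first-order term of \eqref{eq:condition:starInvolution}: writing $\widetilde\phi(w_j w_i) = \I\{w_j,w_i\}_\eta\, t + \mathrm O(t^2)$ and using that ${}^*$ is $\mathbb C$-antilinear, the order-$t$ part of the condition reads $\overline{\I\{w_j,w_i\}_\eta} = \I\{w_{d-i+1},w_{d-j+1}\}_\eta$, i.e.\ $\overline{\{w_j,w_i\}_\eta} = -\{w_{d-i+1},w_{d-j+1}\}_\eta = \{w_{d-j+1},w_{d-i+1}\}_\eta$. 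Since $w_i^* = w_{d+1-i}$, the right-hand side is exactly $\{w_j^*,w_i^*\}_\eta = \{w_j,w_i\}^*_\eta$ computed with pointwise complex conjugation, which says precisely that the bivector $\eta$ is invariant under complex conjugation on $\antidiag$, i.e.\ $\eta$ is a real Poisson structure. (That it is a Poisson structure at all — Jacobi — already follows from Theorem \ref{theorem:equivalence} via associativity in order $t^2$, as noted in the text.)

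For the second, main point I would use the ``quotient algebra'' picture from \eqref{rho}. By Theorem \ref{theorem:equivalence} the combinatorial star product is intertwined via the vector-space isomorphism $\rho$ with the product of $\widetilde A = \mathbb C\langle w_1,\dotsc,w_d\rangle\llrr t / (w_j w_i - w_i w_j - \widetilde\phi(w_j w_i))_{i<j}$. On the free algebra $\mathbb C\langle w_1,\dotsc,w_d\rangle\llrr t$ there is a unique $\mathbb C\llrr t$-antilinear (in the sense of being $\mathbb C$-antilinear and fixing $t$) involution with $w_i \mapsto w_i^* = w_{d+1-i}$ and $(ab)^* = b^*a^*$; I claim condition \eqref{eq:condition:starInvolution} is exactly what is needed for the defining ideal of $\widetilde A$ to be a $^*$\=/ideal, so that this involution descends to $\widetilde A$. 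Indeed, applying ${}^*$ to the generator $r_{ij} := w_j w_i - w_i w_j - \widetilde\phi(w_j w_i)$ gives $w_i^* w_j^* - w_j^* w_i^* - \widetilde\phi(w_j w_i)^* = w_{d-i+1}w_{d-j+1} - w_{d-j+1}w_{d-i+1} - \widetilde\phi(w_{d-i+1}w_{d-j+1})$, where in the last step I used \eqref{eq:condition:starInvolution}; setting $i' = d-j+1 < j' = d-i+1$ this equals $w_{j'}w_{i'} - w_{i'}w_{j'} - \widetilde\phi(w_{j'}w_{i'}) = r_{i'j'}$, a defining generator again (up to sign if one normalizes differently). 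Hence the ideal is $^*$\=/stable and $\widetilde A$ is a $^*$\=/algebra. Now one must check that the standard monomial basis $\{w^K\}$ on which $\rho$ is built is compatible with this involution in the sense that $(w^K)^*$ reduces, in $\widetilde A$, to the image under $\rho$ of the element $\overline{w^{K^{\mathrm{rev}}}}$ — more precisely, that the $^*$\=/involution transported to $\Pol(\antidiag)\llrr t$ via $\rho^{-1}$ coincides with pointwise complex conjugation. This is the one genuinely computational step: one shows by induction on $|K|$ (using the recursive reordering of Definition \ref{definition:combinatorial} and the commutativity at order $t^0$) that $\rho^{-1}\big((\rho(w^K))^*\big) = \overline{w^K}$ where $\overline{\phantom w}$ is complex conjugation of the function $w^K$ on $\antidiag$ — the point being that reversing a monomial in the $w_i$ and then swapping $w_i \leftrightarrow w_{d+1-i}$ returns, at order $t^0$, the same commutative monomial, which as a function on $\antidiag$ is the complex conjugate of $w^K$ by the definition \eqref{eq:wickDisc} of $\antidiag$. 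Granting this, $\rho$ becomes a $^*$\=/isomorphism $(\Pol(\antidiag)\llrr t, \star, \overline{\phantom{w}}) \to (\widetilde A, \cdot, {}^*)$, so from $(f\star g)^* = \rho^{-1}((\rho f \cdot \rho g)^*) = \rho^{-1}(\rho(g)^* \cdot \rho(f)^*) = g^* \star f^*$ we conclude that $\star$ is $^*$\=/compatible; that $\star$ quantizes $\eta$ is already established.

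The main obstacle is precisely the bookkeeping in the last step: matching the monomial-basis convention used to define $\rho$ with the ``antidiagonal'' conjugation on $\antidiag$, and being careful that the involution on $\widetilde A$ is well defined (i.e.\ that \eqref{eq:condition:starInvolution} really does make the ideal $^*$\=/stable, with the index reversal $i \mapsto d-j+1$, $j \mapsto d-i+1$ matching up correctly and preserving the inequality $i<j$). Everything else — reality of $\eta$, descent of the involution, the formal argument that $\star$ is $^*$\=/compatible once $\rho$ is a $^*$\=/map — is then essentially formal. One should also remark that, as in Remark \ref{remark:smooth}, the identity $(f\star g)^* = g^*\star f^*$ extends from $\Pol(\antidiag)\llrr t$ to larger completions by continuity once such completions are constructed in \S\ref{sec:continuity}, but at the formal level the proof is complete as above.
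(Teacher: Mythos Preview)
Your proposal is correct and follows essentially the same route as the paper: show the defining ideal of $\widetilde A$ is a $^*$\=/ideal under \eqref{eq:condition:starInvolution}, deduce that $\rho$ is a $^*$\=/isomorphism, and read off reality of $\eta$ from the first-order part. The one place you overcomplicate matters is the step you flag as ``the main obstacle'': rather than an induction on $|K|$ using reductions, the paper observes directly that the lift $w^K \mapsto w_1^{K_1}\cdots w_d^{K_d}$ into the \emph{free} algebra $\mathbb C\langle w_1,\dotsc,w_d\rangle\llrr t$ is already $^*$\=/compatible, since $(w_1^{K_1}\cdots w_d^{K_d})^* = w_1^{K_d}\cdots w_d^{K_1}$ there, which is exactly the image of $(w^K)^* = w_1^{K_d}\cdots w_d^{K_1} \in \Pol(\antidiag)$ --- so once the ideal is $^*$\=/stable no further computation is needed.
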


\begin{proof}
The assignment $(w_i)^* = w_{d+1-i}$ naturally extends to $^*$\=/involutions on $\Pol(\antidiag)$ and $\mathbb C \langle w_1, \dotsc, w_{d} \rangle$, and further to $^*$\=/involutions on $\Pol(\antidiag)\llrr t$ and $\mathbb C \langle w_1, \dotsc, w_{d} \rangle \llrr{t}$ by requiring that $t^* = t$.
The linear extension of $\Pol(\antidiag) \ni w^K \mapsto w_1^{K_1} \cdot \ldots \cdot w_d^{K_d} \in \mathbb C \langle w_1, \dotsc, w_{d} \rangle$
is compatible with these $^*$\=/involutions: $\big(w_1^{K_1} \cdot \ldots \cdot w_d^{K_d}\big)^* = w_1^{K_d} \cdot \dotsc \cdot w_d^{K_1} \in \mathbb C \langle w_1, \dotsc, w_{d} \rangle$ is the image of $(w^K)^* =  w_1^{K_d} \cdot \dotsc \cdot w_d^{K_1} \in \Pol(\antidiag)$. 
It follows that if the ideal generated by $w_j w_i - w_i w_j - \widetilde \phi(w_j w_i)$ for $1 \leq i < j \leq d$ is a $^*$\=/ideal, then complex conjugation is a $^*$\=/involution for $(\Pol(\antidiag)\llrr t, \star)$. 
This is certainly the case if
\[
(w_j w_i - w_i w_j - \widetilde \phi (w_j w_i))^* = w_{d-i+1} w_{d-j+1} - w_{d-j+1} w_{d-i+1} - \widetilde \phi (w_{d-i+1} w_{d-j+1}) 
\]
which is satisfied if the condition \eqref{eq:condition:starInvolution} on $\widetilde \phi$ holds. In first order in $t$, condition \eqref{eq:condition:starInvolution} reads
\[
(\I \{ w_j, w_i \}_\eta)^* = \I \{ w_{d-i+1}, w_{d-j+1} \}_\eta = \I \{ w_i^*, w_j^* \}_\eta
\]
which is precisely the condition that $\eta$ is a real Poisson structure.
\end{proof}

\begin{example}
\label{example:complexlogcanonical}
Consider the Poisson structure on $\antidiag$ defined by $\{ w_j, w_i \}_\eta = \I w_i w_j$ if $i < j$,
which is precisely the log-canonical Poisson structure of Example~\ref{example:poissonStructure:logcanonicald} with $x$'s replaced by $w$'s and an additional factor of $\I$, introduced to make the Poisson structure real. One has
\begin{equation*}
	\{ x_j, x_i \}_\eta = \begin{cases}
		x_j x_{d+1-i} & \text{if $i<d+1-j$} \\
		\frac 1 2 (x_i^2 + x_j^2) &\text{if $i = d+1-j$} \\
		x_{d+1-j} x_i & \text{if $i>d+1-j$}
	\end{cases}
\end{equation*}
for any $1 \leq i < j \leq d$, where 
$x_j \coloneqq \operatorname{Re}(w_j) \coloneqq \frac 1 2 (w_j + w_j^*)$ for $1 \leq j \leq \lceil d/2 \rceil$ and 
$x_j \coloneqq \operatorname{Im}(w_j) \coloneqq \frac 1 {2\I} (w_j - w_j^*)$ for $\lceil d/2 \rceil + 1 \leq j \leq d$
are real coordinates. Now assume that $\star$ is a combinatorial star product satisfying
\begin{equation*}
	w_j \star w_i = 
	\begin{cases}
		w_j w_i &\text{if $i \geq j$} \\
		q w_i w_j & \text{if $i < j$}
	\end{cases}
\end{equation*}
where $q = 1 - t + \mathrm O (t^2) \in \mathbb C \llrr{t}$. (Since the Poisson structure contains an extra factor of $\I$, the condition $B_1 (f, g) - B_1 (g, f) = \I \{ f, g \}$ is satisfied for this choice of $q$.) By the same reasoning as in Example~\ref{example:poissonStructure:logcanonicald},
$\star$ satisfies the condition of Theorem~\ref{theorem:equivalence} \refitem{equiv3}, and an explicit formula for $\star$ is
	\begin{equation} \label{eq:combainatorialProduct:Wick}
		w^K \star w^L = q^{\sum_{1 \leq i < j \leq d} K_j L_i} w^{K+L} \punkt
	\end{equation}
If $q \in \mathbb R\llrr t$, then the assumptions of Proposition~\ref{proposition:compatibilityWithStar:WickType} are satisfied and complex conjugation yields a $^*$\=/involution on $(\Pol(\antidiag) \llrr t, \star)$. This can also be checked explicitly using \eqref{eq:combainatorialProduct:Wick}.
\end{example}

\subsubsection{Symmetrized combinatorial star products}
\label{subsubsection:symmetrized}

Our second strategy for obtaining star products which are compatible with a $^*$\=/involution is to symmetrize the combinatorial star product $\star$, similar to the way the standard-ordered Weyl product can be symmetrized to the Moyal--Weyl product. (Symmetrized combinatorial star products were also considered by the first-named author in joint work with Zhengfang Wang.) We shall denote symmetrized star products by a six-pointed star $\varstar$ instead of the five-pointed $\star$.

Recall the definition of $\rho$ from \eqref{rho}. We have seen in \S\ref{subsection:combinatorial} that the combinatorial star product can be given by the formula $f \star g = \rho^{-1} (\rho (f) \cdot \rho (g))$ when the equivalent conditions of Theorem~\ref{theorem:equivalence} are satisfied. If this is the case, then not only $\rho$, but also the $\mathbb C \llrr{t}$-linear map
\begin{equation}
\label{sigma}
\begin{tikzpicture}[baseline=-2.6pt,description/.style={fill=white,inner sep=2pt}]
\matrix (m) [matrix of math nodes, row sep=.75em, text height=1.5ex, column sep=0em, text depth=0.25ex, ampersand replacement=\&, column 3/.style={anchor=base west}, column 1/.style={anchor=base east}]
{
	\sigma \colon \&[-.7em] \mathbb C [x_1, \dotsc, x_d] \llrr{t} \&[2em] \mathbb C \langle x_1, \dotsc, x_d \rangle \llrr{t} / (x_j x_i - x_i x_j - \widetilde \phi (x_j x_i))_{1 \leq i < j \leq d} \\
	\& x_{i_1} \dots x_{i_k} \& \displaystyle\frac{1}{k!} \sum_{s \in \mathfrak S_k} x_{i_{s(1)}} \dots x_{i_{s(k)}} \\
};
\path[->,line width=.4pt,font=\scriptsize]
(m-1-2) edge (m-1-3)
;
\path[|->,line width=.4pt]
(m-2-2) edge (m-2-3)
;
\end{tikzpicture}
\end{equation}
for $1 \leq i_1 \leq \dotsb \leq i_k \leq d$ is an isomorphism which can be viewed as a symmetrized version of $\rho$. This symmetrized isomorphism can be used to give the following definition.

\begin{definition}
\label{definition:symmetrized}
Let $\widetilde \phi \in \mathrm K^2 \hatotimes (t)$ satisfy the equivalent conditions of Theorem~\ref{theorem:equivalence}.
Define the \emph{symmetrized combinatorial star product} $\varstar \colon \Pol(\mathbb R^d)\llrr t \times \Pol(\mathbb R^d)\llrr t \to \Pol(\mathbb R^d)\llrr t$ by
\begin{equation*}
f \varstar g = \sigma^{-1} (\sigma (f) \cdot \sigma (g)) \punkt
\end{equation*}
\end{definition}

We now consider $\mathbb C\langle x_1, \dots, x_d\rangle\llrr t$ with the $^*$\=/involution obtained by extending $x_i^* = x_i$ and $t^* = t$.
Since any symmetric element $\sum_{s \in \mathfrak S_k} x_{i_{s(1)}} \dotsb x_{i_{s(k)}} \in \mathbb C\langle x_1, \dots, x_d\rangle\llrr t$
is fixed by the involution, it follows that $\sigma$ is compatible with the $^*$\=/involution if the ideal on the right-hand side of \eqref{sigma} is a $^*$\=/ideal. This proves:

\begin{proposition} \label{proposition:compatibilityWithStar:symmetric}
	Let $\widetilde \phi \in \mathrm K^2 \hatotimes (t)$ satisfy the equivalent conditions of Theorem~\ref{theorem:equivalence} and
	assume that the ideal generated by $x_j x_i - x_i x_j - \widetilde\phi(x_j x_i)$ for $1 \leq i < j \leq d$
	is a $^*$\=/ideal of $\mathbb C\langle x_1, \dots, x_d \rangle \llrr{t}$.
	Then $(\Pol(\mathbb R^d) \llrr{t}, \varstar)$ with complex conjugation as $^*$\=/involution is a $^*$\=/algebra.
\end{proposition}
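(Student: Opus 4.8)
The plan is to transport the $^*$\=/structure from the quotient algebra $\widetilde A = \mathbb C\langle x_1, \dotsc, x_d\rangle\llrr{t}/I$, with $I = (x_j x_i - x_i x_j - \widetilde\phi(x_j x_i))_{1 \leq i < j \leq d}$, to $\Pol(\mathbb R^d)\llrr{t}$ along the linear isomorphism $\sigma$ of \eqref{sigma}, in the same spirit in which the unsymmetrized combinatorial star product is transported along $\rho$. Concretely, $\varstar$ is a $^*$\=/algebra structure as soon as $\sigma$ is shown to intertwine the relevant involutions.

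First I would record that, by hypothesis, $I$ is a $^*$\=/ideal of the free algebra $\mathbb C\langle x_1, \dotsc, x_d\rangle\llrr{t}$ with the $^*$\=/involution determined by $x_i^* = x_i$ and $t^* = t$; hence this involution descends to a $\mathbb C\llrr{t}$-antilinear involution on $\widetilde A$ still satisfying $(ab)^* = b^* a^*$, so that $(\widetilde A, \cdot, {}^*)$ is a $^*$\=/algebra. Since $\sigma$ is a $\mathbb C\llrr{t}$-linear isomorphism and $\varstar$ is defined by $f \varstar g = \sigma^{-1}(\sigma(f)\cdot\sigma(g))$, the product $\varstar$ is automatically associative and unital with unit $\sigma^{-1}(1) = 1$. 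It therefore remains only to check that $\sigma$ intertwines complex conjugation on $\Pol(\mathbb R^d)\llrr{t}$ (extended by $t^* = t$) with the induced $^*$\=/involution on $\widetilde A$.

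The key step is the $^*$\=/equivariance of $\sigma$, and here the observation made just before the statement does the work. For a monomial $x^K = x_{i_1}\dotsm x_{i_k}$ with $1 \leq i_1 \leq \dotsb \leq i_k \leq d$ one has $(x^K)^* = x^K$ in $\Pol(\mathbb R^d)\llrr{t}$, since $x_1, \dotsc, x_d$ are real coordinate functions, so $\sigma\bigl((x^K)^*\bigr) = \sigma(x^K) = \tfrac{1}{k!}\sum_{s \in \mathfrak S_k} x_{i_{s(1)}}\dotsm x_{i_{s(k)}}$. On the other hand, applying the involution of $\widetilde A$ term by term gives $(x_{i_{s(1)}}\dotsm x_{i_{s(k)}})^* = x_{i_{s(k)}}\dotsm x_{i_{s(1)}}$, and reindexing the sum by $s \mapsto s \circ w_0$, with $w_0 \in \mathfrak S_k$ the order-reversing permutation, shows $\sigma(x^K)^* = \sigma(x^K)$: the symmetrized monomials are self-adjoint. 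Extending $\mathbb C\llrr{t}$-antilinearly (both $\sigma\bigl(({-})^*\bigr)$ and $\sigma({-})^*$ are antilinear, agree on the basis $\{x^K\}_{K \in \mathbb N_0^d}$, and respect $t^* = t$), we obtain $\sigma(f^*) = \sigma(f)^*$ for all $f \in \Pol(\mathbb R^d)\llrr{t}$.

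Finally I would assemble the pieces: for $f, g \in \Pol(\mathbb R^d)\llrr{t}$,
\[
\sigma\bigl((f \varstar g)^*\bigr) = \bigl(\sigma(f)\cdot\sigma(g)\bigr)^* = \sigma(g)^* \cdot \sigma(f)^* = \sigma(g^*)\cdot\sigma(f^*) = \sigma(g^* \varstar f^*),
\]
and applying $\sigma^{-1}$ gives $(f \varstar g)^* = g^* \varstar f^*$; together with the antilinearity and involutivity of complex conjugation this exhibits $(\Pol(\mathbb R^d)\llrr{t}, \varstar)$ as a $^*$\=/algebra, indeed one isomorphic as a $^*$\=/algebra to $(\widetilde A, \cdot)$. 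The only mildly delicate point — and the one I would be most careful about — is keeping the three involutions apart (on $\Pol(\mathbb R^d)\llrr{t}$, on the free algebra, and the induced one on $\widetilde A$) and nailing the reindexing argument that makes symmetrized monomials self-adjoint; everything else is a formal consequence of $\sigma$ being a $^*$\=/equivariant linear isomorphism that intertwines the two products.
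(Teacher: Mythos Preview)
Your proof is correct and follows essentially the same approach as the paper: the paper's argument (given in the paragraph immediately preceding the proposition) observes that symmetrized elements $\sum_{s \in \mathfrak S_k} x_{i_{s(1)}}\dotsm x_{i_{s(k)}}$ are fixed by the $^*$\=/involution on the free algebra, so that $\sigma$ is $^*$\=/equivariant once the ideal is a $^*$\=/ideal, and you have simply spelled out this reasoning in full detail, including the reindexing $s \mapsto s \circ w_0$ and the final verification that $(f \varstar g)^* = g^* \varstar f^*$.
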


The following proposition shows that the symmetrized combinatorial star product can be viewed as a generalization of the Moyal--Weyl and Gutt star products to nonlinear Poisson structures.

\begin{proposition} \label{proposition:moyalWeylAndGutt}
Let $\eta$ be a polynomial Poisson structure with only constant and linear terms and let $\widetilde \phi \in \mathrm K^2 \hatotimes (t)$ be given by $\widetilde \phi (x_j x_i) = \I \{ x_j, x_i \}_\eta t$. Then the equivalent conditions of Theorem~\ref{theorem:equivalence} are satisfied.
Moreover, if $\eta$ is constant, then the symmetrized combinatorial star product $\varstar$ coincides with the Moyal--Weyl star product $\varstar^{\mathrm W}$ and if $\eta$ is linear, then $\varstar$ coincides with the Gutt star product $\varstar^{\mathrm G}$.
\end{proposition}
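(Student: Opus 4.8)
The plan is to treat the three assertions in turn, the unifying point being that for an affine Poisson structure $\eta$ (only constant and linear terms) and $\widetilde\phi(x_j x_i) = \I\{x_j,x_i\}_\eta t$, the algebra $\widetilde A = \mathbb C\langle x_1,\dotsc,x_d\rangle\llrr{t}/(x_j x_i - x_i x_j - \widetilde\phi(x_j x_i))_{1 \leq i < j \leq d}$ of Theorem~\ref{theorem:equivalence} is a central quotient of a universal enveloping algebra, and that the map $\sigma$ from \eqref{sigma} is precisely the Poincaré--Birkhoff--Witt symmetrization onto $\widetilde A$.

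For associativity I would verify, rather than condition \refitem{equiv3} directly, the equivalent condition of Theorem~\ref{theorem:equivalence} that $\widetilde A$ is a formal deformation of $A$. Write $\{x_j,x_i\}_\eta = \sum_m c_{ji}^m x_m + c_{ji}^0$ with $c_{ji}^m, c_{ji}^0 \in \mathbb C$ and introduce the Lie algebra $\mathfrak h$ over $\mathbb C\llrr{t}$ with basis $y_1,\dotsc,y_d,z$, where $z$ is central and $[y_j,y_i] = \I t\big(\sum_m c_{ji}^m y_m + c_{ji}^0 z\big)$. Expanding the Jacobiator of $\mathfrak h$ on $(y_k,y_j,y_i)$ and splitting by homogeneity in the $y_m$, one sees that the Jacobi identity for $\mathfrak h$ amounts exactly to the vanishing of the linear and constant parts of the Jacobiator of $\{\argument,\argument\}_\eta$, i.e.\ to the Jacobi identity for $\eta$, which holds by assumption. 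The Poincaré--Birkhoff--Witt theorem --- valid over any commutative ground ring for a Lie algebra that is free as a module --- then shows that $U(\mathfrak h)$ is free over $\mathbb C\llrr{t}$ on the monomials $y^K z^n$; since $z$ is central, $U(\mathfrak h)/(z-1) \cong \widetilde A$ is free over $\mathbb C\llrr{t}$ on $\{x^K\}_{K \in \mathbb N_0^d}$ and reduces to $\mathbb C[x_1,\dotsc,x_d]$ modulo $t$. Hence $\widetilde A$ is a formal deformation of $A$, so $\widetilde\phi$ satisfies all the equivalent conditions of Theorem~\ref{theorem:equivalence} and $\star$, $\varstar$ are associative. (In the constant case $\mathfrak h$ is the Heisenberg algebra and $\widetilde A$ the formal Weyl algebra; in the linear case $c_{ji}^0 = 0$ and $\widetilde A = U(\mathfrak g_t)$, where $\mathfrak g_t$ is the Lie algebra of $\eta$ with bracket rescaled by $\I t$. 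A direct verification of \refitem{equiv3}, using that each $\widetilde\phi(x_j x_i)$ has degree $\leq 1$, is also possible but more tedious.)

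To identify $\varstar$ I would use the ``exponential trick''. For $a \in \mathbb C^d$ write $a\cdot x \coloneqq \sum_i a_i x_i$; since $a\cdot x$ commutes with itself, the total symmetrization of $(a\cdot x)^n$ is again $(a\cdot x)^n$, so $\sigma(\E^{a\cdot x}) = \E^{a\cdot x}$ in $\widetilde A$, understood as an identity of formal power series in the auxiliary variable $a$ over $\mathbb C\llrr{t}$ with coefficients in $\Pol(\mathbb R^d)$ resp.\ $\widetilde A$, whose $a^K$-component is $\sigma\big(\tfrac1{K!}x^K\big) = \tfrac1{K!}x^K$ (with $K! = K_1!\cdots K_d!$). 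In the constant case $[a\cdot x, b\cdot x] = \I t\sum_{i,j}a_i b_j\Theta^{ij}$ (with $\Theta^{ij} = \{x_i,x_j\}_\eta \in \mathbb C$) is central in the Weyl algebra $\widetilde A$, so Baker--Campbell--Hausdorff gives $\E^{a\cdot x}\E^{b\cdot x} = \E^{\frac{\I t}{2}\sum_{i,j}a_i b_j\Theta^{ij}}\E^{(a+b)\cdot x}$ in $\widetilde A$; applying $\sigma^{-1}$ and $\sigma^{-1}(\E^{(a+b)\cdot x}) = \E^{(a+b)\cdot x} = \E^{a\cdot x}\E^{b\cdot x}$ (commutative product) yields
\[
\E^{a\cdot x}\varstar\E^{b\cdot x} = \E^{\frac{\I t}{2}\sum_{i,j}a_i b_j\Theta^{ij}}\,\E^{a\cdot x}\,\E^{b\cdot x}\komma
\]
which is the generating-function identity of the Moyal--Weyl product $\varstar^{\mathrm W}$. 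Since $x^K\varstar x^L$ and $x^K\varstar^{\mathrm W}x^L$ are polynomials, comparing the coefficients of $a^K b^L$ on both sides gives $x^K\varstar x^L = x^K\varstar^{\mathrm W}x^L$ for all $K,L$, so $\varstar = \varstar^{\mathrm W}$. The linear case is verbatim the same with the Weyl algebra replaced by $U(\mathfrak g_t)$: here $\E^{a\cdot x}\E^{b\cdot x} = \E^{Z(a,b;t)\cdot x}$ where $Z(a,b;t)\cdot x$ is the Baker--Campbell--Hausdorff series of $a\cdot x$ and $b\cdot x$ in $\mathfrak g_t$, which is a Lie series and hence again a $\mathbb C\llrr{t}$-linear form in the $x_i$, so $\sigma^{-1}(\E^{Z\cdot x}) = \E^{Z\cdot x}$ and $\E^{a\cdot x}\varstar\E^{b\cdot x} = \E^{Z(a,b;t)\cdot x}$ --- the defining exponential identity of the Gutt product $\varstar^{\mathrm G}$ --- whence $\varstar = \varstar^{\mathrm G}$ after comparing $a^K b^L$-coefficients.

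The main difficulty is bookkeeping rather than conceptual. One has to match the sign and normalization conventions (the factors of $\I$, the $\tfrac12$, and the orientation of the bracket) against whichever explicit definitions of $\varstar^{\mathrm W}$ and $\varstar^{\mathrm G}$ are taken as reference, and one has to make the ``exponential trick'' precise: all identities involving $\E^{a\cdot x}$ must be read strictly as identities of formal power series in the auxiliary variables $a,b$ over $\mathbb C\llrr{t}$ with coefficients in $\Pol(\mathbb R^d)$ resp.\ $\widetilde A$, and one should check that the coefficient of each $a^K b^L$ is a polynomial --- not merely a power series --- in $t$, consistently with the flatness established in the first part.
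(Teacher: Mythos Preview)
Your proposal is correct. The paper's own proof is a two-line remark: it cites \cite[\S 10]{barmeierwang} for the first assertion and says the identification with $\varstar^{\mathrm W}$ and $\varstar^{\mathrm G}$ ``follows straightforwardly from the definition''. Your argument is a self-contained unpacking of exactly this: the PBW route to flatness of $\widetilde A$ is a standard (and arguably the cleanest) way to verify the equivalent conditions of Theorem~\ref{theorem:equivalence} in the affine case, and the exponential/BCH trick is precisely what ``straightforward from the definition'' means here, since the Gutt product is \emph{defined} by $\E^{a\cdot x}\varstar^{\mathrm G}\E^{b\cdot x} = \E^{\mathrm{BCH}(a\cdot x,\,b\cdot x)}$ and the Moyal--Weyl product by the truncated BCH identity with central commutator. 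So the approaches agree in spirit; yours simply supplies the details the paper omits, at the cost of some bookkeeping (the caveats you flag about conventions and reading the exponentials as formal power series in the auxiliary parameters are exactly the right ones to flag).
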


\begin{proof}
The first assertion was proved in \cite[\S 10]{barmeierwang}. The second assertion follows straightforwardly from the definition of the Moyal--Weyl and Gutt star products.
\end{proof}

\begin{example} \label{example:symmetrizedStarProdForLogCanononical}
	Let $\mathbb R^d$ be endowed with the log-canonical Poisson structure (see Example~\ref{example:poissonStructure:logcanonicald}). 
	Then $\widetilde \phi(x_j x_i) = (q - 1) x_i x_j$ for $1 \leq i < j \leq d$ satisfies the equivalent conditions of Theorem~\ref{theorem:equivalence} for any $q = 1 + \I t + \mathrm O (t^2)\in \mathbb C\llrr t$,
	and the associated symmetrized combinatorial star product $\varstar$ quantizes the log-canonical Poisson structure. 
	Note that we have $(x_j x_i - q x_i x_j)^* = x_i x_j - q^* x_j x_i = -q^*(x_j x_i - (q^*)^{-1} x_i x_j)$.
	This element is in the $^*$\=/ideal generated by $x_j x_i - q x_i x_j$ if $q q^* = 1$,
	in which case $\varstar$ is compatible with the $^*$\=/involution by Proposition~\ref{proposition:compatibilityWithStar:symmetric}.
	
The formulae for the ``standard-ordered'' combinatorial star product $\star$ and the symmetrized combinatorial star product $\varstar$ differ as follows:
\begin{flalign*}
&& x_i \star x_j &=
\begin{cases}
x_i x_j & \text{if } i < j \\
x_i^2 & \text{if } i = j \\
q x_j x_i & \text{if } i > j
\end{cases}
&
x_i \varstar x_j &=
\begin{cases}
\frac{2}{1 + q} x_i x_j & \text{if } i < j \\
x_i^2 & \text{if } i = j \\
\frac{2q}{1 + q} x_j x_i & \text{if } i > j.
\end{cases}
&&
\end{flalign*}
(In the formula for $\varstar$, the fractions $\frac 1 {1+q}$ and $\frac q{1+q}$ should be expanded as formal power series in $t$.) A general formula will be given in Proposition \ref{proposition:symmetrized}.
\end{example}

\subsection{Convergence of combinatorial star products}
\label{subsec:convergence}

For constant and linear Poisson structures on $\mathbb R^d$, the Moyal--Weyl and Gutt star products converge on the space of polynomial functions, since the formal star product of two polynomial functions is a polynomial in the formal parameter and not a formal power series. We now prove convergence of combinatorial star products for a range of {\it nonlinear} polynomial Poisson structures. 

\subsubsection{Log-canonical Poisson structure}
\label{subsubsec:convergencelogcanonical}

We shall start with a detailed discussion of convergence for the quantization of the log-canonical Poisson structure which also applies to the convergence results in subsequent sections.

In Example \ref{example:poissonStructure:logcanonicald} we gave an explicit formula for the formal combinatorial star product $\star \colon \Pol (\mathbb R^d) \llrr t \times \Pol (\mathbb R^d) \llrr t \to \Pol (\mathbb R^d) \llrr t$ quantizing the log-canonical Poisson structure on $\mathbb R^d$:
\begin{equation}
\label{logcanonical1}
x^K \star x^L = q^{\sum_{1 \leq i < j \leq d} K_j L_i} x^{K+L}
\end{equation}
where $q = 1 + \I t + \mathrm O (t^2)$ could be any formal power series in $t$.

Let $\tilde q$ be a holomorphic function in $\hbar$ defined on an open set $\Omega \subset \mathbb C$ containing $0$, with power series expansion around $\hbar = 0$ of the form $q = 1 + \I t + \mathrm O (t^2)$. For any $\hbar \in \Omega$, we may consider the strict product
\begin{equation}
\label{logcanonical}
x^K \star_\hbar x^L = \tilde q (\hbar)^{\sum_{1 \leq i < j \leq d} K_j L_i} x^{K+L}
\end{equation}
which defines an associative bilinear map $\star_\hbar \colon \Pol (\mathbb R^d) \times \Pol (\mathbb R^d) \to \Pol (\mathbb R^d)$. The strict star product $f \star_\hbar g$ of two polynomials $f, g$ is thus no longer a formal power series in $t$, but simply a polynomial in $x_1, \dotsc, x_d$.

Since $\tilde q$ is holomorphic at $0$, $q$ has positive radius of convergence $r$. Therefore, for all $\hbar \in \complexDisc = \{ z \in \mathbb C \mid \abs z < r \}$, \eqref{logcanonical} can be viewed as the evaluation of \eqref{logcanonical1} for $t \mapsto \hbar$. By the uniqueness of analytic continuation, one can recover \eqref{logcanonical} for all $\hbar \in \Omega$. Generally, for any two polynomials $f, g$, the coefficient functions of the formal star product $f \star g$ lie in a finite-dimensional subspace of $\Pol(\mathbb R^d)$. Since $f \star g$ converges on $\complexDisc$, analytic continuation implies that $f \star g$ can be evaluated to $f \star_\hbar g$ for any $f, g \in \Pol (\mathbb R^d)$ and any $\hbar \in \Omega$. Moreover, since evaluation at $\hbar \in \complexDisc$ intertwines $\star$ and $\star_\hbar$ and since the formal star product is associative, $(f \star_\hbar g) \star_\hbar h$ and $f \star_\hbar (g \star_\hbar h)$ must coincide on $\complexDisc$ and therefore on all of $\Omega$ for any fixed $f,g,h \in \Pol(\mathbb R^d)$. This gives an abstract argument showing the associativity of $\star_\hbar$ which also applies to the examples in subsequent sections.

Some natural choices for $\tilde q$ would be $\tilde q (\hbar) = 1 + \I \hbar$, $\E^{\I \hbar}$, or $\frac{1}{1 - \I \hbar}$ whose power series expansions are all of the form $q = 1 + \I t + \mathrm O (t^2)$. The formal star product $\star$, and in particular the Poisson structure it quantizes, can be recovered from the family of strict star products $\star_\hbar$: For fixed $f, g \in \Pol (\mathbb R^d)$, the map $\hbar \mapsto f \star_\hbar g$ defines a holomorphic function $\Omega \to \Pol (\mathbb R^d)$ whose power series expansion around $\hbar = 0$ is the formal combinatorial star product $f \star g$. (Note that for fixed polynomials $f, g$, their star product $f \star_\hbar g$ lies in a finite-dimensional subspace of $\Pol (\mathbb R^d)$, so here the notion of holomorphy is the standard one.) We shall no longer distinguish a holomorphic function $\tilde q$ and its power series expansion $q$ around $\hbar = 0$.

In \S\ref{sec:continuity} we show how to extend the strict star products $\star_\hbar$ to larger function spaces which are obtained as completions with respect to some locally convex topology, where the details will depend on whether $\abs {q (\hbar)} \leq 1$ or not. Note that this final condition is for example satisfied for $q (\hbar) = \mathrm e^{\I \hbar}$ whenever $\hbar$ lies in the closed upper half-plane, in particular for all real values of $\hbar$.

\begin{remark}\label{remark:compatibilityWithStar:Wick:strict}
The above discussion on convergence applies mutatis mutandis to the combinatorial star products of Wick type, in particular to the quantization of the Poisson structure $\{ w_j , w_i \} = \I w_i w_j$ where $1 \leq i < j \leq d$ on $\mathbb R^d$ introduced in Example \ref{example:complexlogcanonical} whose formula in the $w_i$ coordinates is very similar to the formula for the log-canonical Poisson structure in the $x_i$ coordinates, 
even though it quantizes a different real Poisson structure. 
Note, however, that $q$ must now be of the form $q = 1 - t + \mathrm O (t^2)$. 
Choosing $q (\hbar) = \mathrm e^{-\hbar}$ we have that $\abs {q (\hbar)} \leq 1$ whenever $\hbar$ lies in the closed right half-plane, in particular for all non-negative real values of $\hbar$. Like the formal star product $\star$, the star product $\star_\hbar$ obtained by evaluating $t \mapsto \hbar$ is compatible with the $^*$\=/involution if $q(\hbar)$ is real.
\end{remark}

\subsubsection{Other polynomial Poisson structures}
\label{subsubsection:general}

We now consider other examples of polynomial Poisson structures which can be of arbitrary polynomial degree and find quantizations using the criterion
\begin{equation}
\label{overlap}
(x_k \star x_j) \star x_i = x_k \star (x_j \star x_i)
\end{equation}
for $1 \leq i < j < k \leq d$ given in Theorem \ref{theorem:equivalence}, where $\star$ is the combinatorial star product associated to some element $\widetilde \phi \in \mathrm K^2 \hatotimes (t)$.

Solving \eqref{overlap} for the log-canonical Poisson structure was particularly straightforward (see Example \ref{example:poissonStructure:logcanonicald}). One might expect that for more general polynomial Poisson structures quantizations are very difficult to construct by hand. However, it turns out that simply choosing
\begin{equation}
\label{naive}
\widetilde \phi (x_j x_i) = \I \{ x_j, x_i \}_\eta t
\end{equation}
is ``often'' already enough to satisfy \eqref{overlap}. (The reader may check that this is true for about half of the classes of quadratic Poisson structures on $\mathbb R^3$ in the classification by J.-P.~Dufour and A.~Haraki \cite{dufourharaki}.) An example of such a computation is the following.

\begin{example}
\label{example:poissonStructure:nonQuadratic}
Let $N \in \mathbb N_0$ be fixed and consider the Poisson structure
\[
\eta = (y z + x^N) \frac{\partial}{\partial z} \wedge \frac{\partial}{\partial y}
\]
on $\mathbb R^3$. For $N = 2$, this is the Poisson structure given by D.~Manchon, M.~Masmoudi, and A.~Roux \cite{manchonmasmoudiroux} as an example of a quadratic Poisson structure that cannot be quantized via Drinfel'd twists.

Let $\widetilde \phi$ be given as in \eqref{naive}, i.e.\ $\widetilde \phi (y x) = \widetilde \phi (z x) = 0$ and $\widetilde \phi (z y) = \I \{ z, y \}_\eta t = (y z + x^N) \I t$. The associated combinatorial star product $\star$ satisfies \eqref{overlap} since
\begin{align*}
z \star (y \star x) &= \eqmakebox[xyz][c]{$z \star (x y) = \rho^{-1} (x \cdot z \cdot y)$} = x y z (1 + \I t) + x^{N + 1} \I t \\
(z \star y) \star x &= \eqmakebox[xyz][c]{$\bigl( y z (1 + \I t) + x^N \I t \bigr) \star x$} = x y z (1 + \I t) + x^{N + 1} \I t
\end{align*}
and thus $\star$ quantizes $\eta$.
\end{example}

The next example is a slight modification of Example \ref{example:poissonStructure:nonQuadratic} which illustrates that even when the combinatorial star product associated to the naive choice \eqref{naive} does \emph{not} satisfy \eqref{overlap}, it is often easy enough to find suitable higher correction terms.

\begin{example}
\label{example:poissonStructure:nonQuadratic:exact}
Let $N \in \mathbb N_0$ be fixed and consider the exact Poisson structure on $\mathbb R^3$ associated to the polynomial function $- x y z - \frac{1}{N+1} x^{N+1}$, i.e.\
\begin{equation*} \label{eq:poissonStructure:nonQuadratic:exact}
\eta = x y \frac{\partial}{\partial y} \wedge \frac{\partial}{\partial x} - x z \frac{\partial}{\partial z} \wedge \frac{\partial}{\partial x} + (y z + x^N) \frac{\partial}{\partial z} \wedge \frac{\partial}{\partial y} \punkt
\end{equation*}
Let $p,q, r, s \in \mathbb C \llrr{t}$ be formal power series of the form $p = 1 + \I t + \mathrm O (t^2)$, 
$q = 1 + \I t + \mathrm O (t^2)$, $r = 1 + \I t + \mathrm O (t^2)$, and $s = 1 - \I t + \mathrm O (t^2)$ and let $\widetilde \phi$ be determined by
\begin{align*}
\widetilde \phi (y x) &= (r - 1) x y \\
\widetilde \phi (z x) &= (s - 1) x z \\
\widetilde \phi (z y) &= (q - 1) yz + (p-1) x^N
\end{align*}
which is essentially \eqref{naive} with general yet-to-be-determined higher-order terms. Computing the associated combinatorial star product one finds
\begin{align*}
z \star (y \star x) &= z \star (r x y) = \rho^{-1}(rs x \cdot z \cdot y) = q r s xyz + (p-1) rs x^{N+1} \\
(z \star y) \star x &= (q yz + (p-1) x^N) \star x = q r s x y z + (p-1) x^{N+1}
\end{align*}
and comparing the right-hand sides we see that it suffices to set $s = r^{-1}$ to satisfy \eqref{overlap} in which case $\star$ quantizes $\eta$.
\end{example}

The following proposition gives explicit formulae and convergence results for the star products in Examples \ref{example:poissonStructure:nonQuadratic} and \ref{example:poissonStructure:nonQuadratic:exact}. For a word $w = (w_1, \dots, w_k) \in \set{0,1}^k$ in $0$'s and $1$'s we write $\abs w \coloneqq \sum_{i=1}^k w_i$ and 
$w_{1\dots i}=(w_1, \dots, w_i) \in \set{0,1}^i$, where $i \in \{1, \dots, k\}$. We use the convention that $\set{0,1}^0$ contains one element, the empty word $\emptyset$ with $\abs \emptyset = 0$.

\begin{proposition}
\label{proposition:nonQuadratic}
\hyperdef{equation}{\theequation}{} 
\begin{enumerate}
\item \label{item:proposition:nonQuadraticFormula:i}
For $N \in \mathbb N_0$ and formal power series $p = 1 + \I t p_1 + \mathrm O(t^2)$, $q = 1 + \I t q_1 + \mathrm O (t^2)$,
and $r = 1 + \I t r_1 + \mathrm O (t^2) \in \mathbb C \llrr t$, the combinatorial star product $\star$ determined by 
\begin{align*} \label{eq:nonQuadratic:product}
	\widetilde \phi(y x) &= (r - 1) x y \\
	\widetilde \phi(z x) &= (r^{-1} - 1) x z\\
	\widetilde \phi(z y) &= (q - 1) y z + (p-1) x^N
\end{align*}
is a formal deformation quantization of the Poisson structure with Poisson bivector
\begin{equation} \label{eq:nonQuadratic:poissonStructure}
	\eta =  r_1 xy \frac{\partial}{\partial y} \wedge \frac{\partial}{\partial x}- r_1 xz \frac{\partial}{\partial z} \wedge \frac{\partial}{\partial x}  + (q_1 yz + p_1 x^N) \frac{\partial}{\partial z} \wedge \frac{\partial}{\partial y}\punkt 
\end{equation}
\item \label{item:proposition:nonQuadraticFormula:ii}
For any $i, j, k, \ell, m, n \in \mathbb N_0$, one has
\begin{equation}
	\label{eq:productformula:nonQuadratic:exact}
	x^i y^j z^k \star x^\ell y^m z^n
	= \sum_{\substack{w \in \set{0,1}^k \\ \lvert w \rvert \leq m}} r^{(j-k)\ell + j N \abs w} \lambda_m(w) x^{i+\ell + N \abs w} y^{j+m - \abs w} z^{k+n -\abs w} 
\end{equation}
where $\lambda_m(\emptyset) = 1$ and $\lambda_m(w) \coloneqq \prod_{i=1}^k r^{-N \abs{w_{1\dots i-1}}} \tilde \lambda_{m}(w_{1\dots i-1}, w_i)$ for any $w \in \set{0,1}^k$, $k \in \mathbb N$ with
\begin{equation} \label{eq:lambda}
	\tilde \lambda_{m}(w, s) = \begin{cases}
		q^{m-\abs{w}}  &\text{if $s = 0$} \\
		(p-1) \sum_{j=0}^{m-\abs w-1} (q r^N)^j &\text{if $s = 1$} \punkt
	\end{cases}
\end{equation}
\item \label{item:proposition:nonQuadraticFormula:iii}
Let $\Omega \subset \mathbb C$ be an open connected neighbourhood of $0$ and $p$, $q$ and $r$ be power series expansions of holomorphic functions on $\Omega$, without zeros in the case of $r$. Then $\star$ can be evaluated to an associative product $\star_\hbar \colon \Pol(\mathbb R^3) \times \Pol(\mathbb R^3) \to \Pol(\mathbb R^3)$
for any $\hbar \in \Omega$.
\item \label{item:proposition:nonQuadraticFormula:iv}
For any fixed $f, g \in \Pol(\mathbb R^3)$, the map $\Omega \to \Pol(\mathbb R^3)$, $\hbar \mapsto f \star_\hbar g$ takes values in a finite-dimensional subspace $V \subset \Pol(\mathbb R^3)$ and depends holomorphically on $\hbar$. Moreover, $\frac{1}{\I\hbar}(f \star_\hbar g - g \star_\hbar f) \to \{f,g\}_\eta$ as $\hbar \to 0$ (with respect to the usual topology on the finite-dimensional space $V$). \qedhere
\end{enumerate}
\end{proposition}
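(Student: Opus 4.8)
The plan is to prove the four parts in order; part~\refitem{item:proposition:nonQuadraticFormula:ii} carries essentially all the work, with parts \refitem{item:proposition:nonQuadraticFormula:i}, \refitem{item:proposition:nonQuadraticFormula:iii} and \refitem{item:proposition:nonQuadraticFormula:iv} then following formally from it together with the general theory recalled in \S\ref{subsection:combinatorial}. For part~\refitem{item:proposition:nonQuadraticFormula:i}, the associativity condition of Theorem~\ref{theorem:equivalence}~\refitem{equiv3} is verified exactly as in Example~\ref{example:poissonStructure:nonQuadratic:exact}, whose $\widetilde\phi$ is the present one with $s = r^{-1}$: one finds $z \star (y \star x) = q\,xyz + (p-1)\,x^{N+1} = (z \star y)\star x$, and the requirement in that example that $p,q,r$ be of the form $1 + \I t + \mathrm O(t^2)$ plays no role in this check. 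Hence $\star$ is an associative combinatorial star product, and it quantizes \eqref{eq:nonQuadratic:poissonStructure} because reading off the first-order terms of $\widetilde\phi(yx) = (r-1)xy$, $\widetilde\phi(zx) = (r^{-1}-1)xz$ and $\widetilde\phi(zy) = (q-1)yz + (p-1)x^N$ via $\{x_j, x_i\}_\eta = \tfrac1\I \widetilde\phi_1(x_j x_i)$ gives $\{y,x\}_\eta = r_1\,xy$, $\{z,x\}_\eta = -r_1\,xz$, $\{z,y\}_\eta = q_1\,yz + p_1\,x^N$.

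For part~\refitem{item:proposition:nonQuadraticFormula:ii}, since $\star$ is associative the reductions may be carried out in any order, so I would bring $x^i y^j z^k x^\ell y^m z^n$ to standard form in three stages, all built from the elementary commutation rules $zx = r^{-1}xz$ and $yx = r\,xy$ (hence $z^a x^b = r^{-ab}x^b z^a$ and $y^a x^b = r^{ab}x^b y^a$) together with $zy = q\,yz + (p-1)x^N$. In Stage~1 one moves $x^\ell$ to the far left past $z^k y^j$, contributing $r^{j\ell - k\ell} = r^{(j-k)\ell}$ and leaving $x^{i+\ell} y^j (z^k y^m) z^n$. In Stage~2 one reduces the middle block $z^k y^m$; a short induction on $m$ gives the key identity
\[
z\, y^m \;=\; q^m\, y^m z \;+\; (p-1)\Bigl(\sum_{j=0}^{m-1}(q r^N)^j\Bigr)\, x^N y^{m-1}\komma
\]
and applying it to the $k$ copies of $z$ in turn, read from right to left (the order in which $w \in \set{0,1}^k$ indexes them), each copy either passes through the current $y$'s (the case $w_i = 0$, contributing $q^{m'}$ with $m' = m - \abs{w_{1\dots i-1}}$ the current number of $y$'s) or splits off a factor $x^N$ (the case $w_i = 1$, contributing $(p-1)\sum_j (qr^N)^j$), after which that $x^N$ is moved left past the remaining not-yet-reduced copies of $z$ via $z^a x^N = r^{-Na}x^N z^a$; matching the accumulated powers of $q$, of $p-1$ and of $r^{-N}$ against \eqref{eq:lambda} yields $z^k y^m = \sum_{w \in \set{0,1}^k,\, \abs w \le m} \lambda_m(w)\, x^{N\abs w} y^{m-\abs w} z^{k-\abs w}$. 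In Stage~3 one moves the $\abs w$ factors $x^N$ so produced left past $y^j$, contributing $r^{jN\abs w}$, and collects powers; multiplying the prefactors of the three stages gives precisely \eqref{eq:productformula:nonQuadratic:exact}. I expect this to be the only genuine obstacle, and the difficulty is purely bookkeeping: one must pin down the index convention for $w$, keep the three distinct sources of $r$-powers apart ($r^{(j-k)\ell}$ from Stage~1, $r^{jN\abs w}$ from Stage~3, and the $r^{-N}$'s inside $\lambda_m$ from Stage~2), and set up the induction so that a split decrements $m$, $k$ and $\abs w$ coherently.

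Parts~\refitem{item:proposition:nonQuadraticFormula:iii} and~\refitem{item:proposition:nonQuadraticFormula:iv} then follow from \eqref{eq:productformula:nonQuadratic:exact} by the reasoning of \S\ref{subsubsec:convergencelogcanonical}. If $p,q,r$ are power series expansions of holomorphic functions on the connected neighbourhood $\Omega$ of $0$ and $r$ has no zeros on $\Omega$, then $r^{-1} = 1/r$ is holomorphic on $\Omega$, hence so is every coefficient in \eqref{eq:productformula:nonQuadratic:exact} (these being finite sums of products of powers of $p$, $q$, $r$ and $r^{-1}$), and evaluating $t \mapsto \hbar$ yields a well-defined bilinear map $\star_\hbar \colon \Pol(\mathbb R^3)\times\Pol(\mathbb R^3) \to \Pol(\mathbb R^3)$ for every $\hbar \in \Omega$. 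Its associativity is the abstract analytic-continuation argument of \S\ref{subsubsec:convergencelogcanonical}: on a small disc around $0$ where all the series converge, $\star_\hbar$ is the evaluation of the associative formal product $\star$, so $(f \star_\hbar g)\star_\hbar h$ and $f \star_\hbar(g\star_\hbar h)$ agree there; for fixed $f,g,h$ both are holomorphic in $\hbar$ with values in a fixed finite-dimensional subspace of $\Pol(\mathbb R^3)$, hence agree on all of $\Omega$ by the identity theorem. Finally, \eqref{eq:productformula:nonQuadratic:exact} bounds the degrees of the monomials occurring in $x^K \star_\hbar x^L$ in terms of $\abs K$, $\abs L$ and $N$ only, so for fixed $f,g \in \Pol(\mathbb R^3)$ the map $\hbar \mapsto f \star_\hbar g$ takes values in a fixed finite-dimensional $V \subset \Pol(\mathbb R^3)$ and is holomorphic there, with Taylor expansion at $0$ equal to the formal product $f \star g$; in particular $f \star_\hbar g - g \star_\hbar f = \I\hbar\,\{f,g\}_\eta + \mathrm O(\hbar^2)$ near $0$ by part~\refitem{item:proposition:nonQuadraticFormula:i} and Definition~\ref{definition:formalStarProduct}, so dividing by $\I\hbar$ and letting $\hbar \to 0$ in $V$ gives $\tfrac1{\I\hbar}(f \star_\hbar g - g \star_\hbar f) \to \{f,g\}_\eta$, which completes the proof.
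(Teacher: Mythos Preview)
Your proposal is correct and follows essentially the same approach as the paper: part~\refitem{item:proposition:nonQuadraticFormula:i} via the overlap check of Example~\ref{example:poissonStructure:nonQuadratic:exact}, part~\refitem{item:proposition:nonQuadraticFormula:ii} via the identity $z y^m = q^m y^m z + (p-1)\sum_{j=0}^{m-1}(qr^N)^j x^N y^{m-1}$ followed by induction on $k$ for $z^k y^m$, and parts~\refitem{item:proposition:nonQuadraticFormula:iii}--\refitem{item:proposition:nonQuadraticFormula:iv} via analytic continuation as in \S\ref{subsubsec:convergencelogcanonical}. The only cosmetic difference is in the bookkeeping of the $r^{-N}$ powers in Stage~2: you attribute them to moving each newly produced $x^N$ leftwards past the not-yet-processed $z$'s, whereas the paper (and the literal form of $\lambda_m(w)$) attributes a factor $r^{-N\abs{w_{1\dots i-1}}}$ to moving the $i$-th $z$ past the already-accumulated $x^{N\abs{w_{1\dots i-1}}}$; the two accountings give the same total exponent since $\sum_{i=1}^k \abs{w_{1\dots i-1}} = \sum_{j:\,w_j=1}(k-j)$, so this is not a gap but you may wish to phrase your ``matching against \eqref{eq:lambda}'' step to make this regrouping explicit.
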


\begin{proof} The same computation as in Example~\ref{example:poissonStructure:nonQuadratic:exact} shows that $\widetilde\phi$ satisfies condition \refitem{equiv3} in Theorem~\ref{theorem:equivalence} and it is straightforward to verify that the corresponding star product quantizes the Poisson structure \eqref{eq:nonQuadratic:poissonStructure}, thus showing part \refitem{item:proposition:nonQuadraticFormula:i}.
	Part \refitem{item:proposition:nonQuadraticFormula:ii} can be shown by a computation in the algebra 
	$\widetilde A = \mathbb C\langle x,y,z\rangle \llrr{t} / (y x - x y - \widetilde \phi(y x), z x - x z - \widetilde \phi(z x), z y - y z - \widetilde \phi(z y))$, where we shall omit the product sign ${}\cdot{}$ for brevity.
	By induction over $m$, one can easily show that
	\begin{equation*}
		z y^m = q^m y^m z
		+ (p-1) \sum_{j=0}^{m-1} \big(q r^N\big)^j x^N y^{m-1} 
	\end{equation*}
	holds for all $m \in \mathbb N_0$. 
	Next, we prove that $z^k y^m = \sum_{w \in \set{0,1}^k, \lvert w \rvert \leq m} \lambda_m(w) x^{N \abs w} y^{m - \abs w} z^{k -\abs w}$ holds for all $k,m \in \mathbb N_0$ by induction over $k$:
	If $k = 0$, this is clearly true, and if this holds for some $k \in \mathbb N_0$, then 
	\begin{align*}
		z^{k+1} y^{m} 
		&= 
		\sum_{\substack{w \in \set{0,1}^k \\ \lvert w \rvert \leq m}} \lambda_m(w) z x^{N \abs w} y^{m - \abs w} z^{k - \abs w}  \\
		&= \sum_{\substack{w \in \set{0,1}^k \\ \lvert w \rvert \leq m}} \lambda_m(w) r^{-N \abs w} q^{m - \abs w} x^{N \abs w} y^{m - \abs w} z^{k +1- \abs w} \\
		&\phantom{XXX}+
		\sum_{\substack{w \in \set{0,1}^k \\ \lvert w \rvert \leq m-1}} \lambda_m(w) r^{-N \abs w} (p-1) \sum_{j=0}^{m-\abs w-1} (q r^N)^j x^{N (\abs w +1)}  y^{m-1 - \abs w} z^{k - \abs w}  \\
		&=\sum_{\substack{w \in \set{0,1}^{k+1} \\ \lvert w \rvert \leq m}} \lambda_m(w) x^{N \abs w} y^{m - \abs w} z^{k + 1 - \abs w} \,.
	\end{align*}
	Using this, we finally obtain
	\begin{align*}
		x^i y^j z^k x^\ell y^m z^n 
		&= r^{(j-k)\ell} x^{i+\ell} y^j z^k y^m z^n 
		\\&= \sum_{\substack{w \in \set{0,1}^k \\ \lvert w \rvert \leq m}} r^{(j-k)\ell} \lambda_m(w) x^{i+\ell} y^j x^{N \abs w} y^{m-\abs w} z^{k-\abs w} z^n 
		\\&= \sum_{\substack{w \in \set{0,1}^k \\ \lvert w \rvert \leq m}} r^{(j-k)\ell + j N \abs w} \lambda_m(w) x^{i+\ell + N \abs w} y^{j+m-\abs w} z^{k+n-\abs w} \punkt 
	\end{align*}
	To show \refitem{item:proposition:nonQuadraticFormula:iii}, 
	note that holomorphic functions on $\Omega$ are uniquely determined by their power series expansion around $0$, 
	hence $\star$ can indeed be evaluated at any $\hbar \in \Omega$ and is associative (by analytic continuation as in \S\ref{subsubsec:convergencelogcanonical}).
	Part \refitem{item:proposition:nonQuadraticFormula:iv} follows easily from the formula obtained in part \refitem{item:proposition:nonQuadraticFormula:ii}.
\end{proof}

	Formula \eqref{eq:lambda} simplifies if one chooses $p$, $q$, and $r$ such that $p-1$ is a multiple of $qr^N - 1$.
	If $q_1 = p_1 = 1$ and $r_1 = 0$, then the star product in the previous proposition quantizes the Poisson structure from Example~\ref{example:poissonStructure:nonQuadratic}, if $q_1 = p_1 = r_1 = 1$ then it quantizes the Poisson structure from Example~\ref{example:poissonStructure:nonQuadratic:exact}. 

In \S\ref{subsubsec:continuity:combinatorial:nonquadratic} we obtain continuity estimates for the star products introduced in Proposition \ref{proposition:nonQuadratic}.

\subsubsection{Symmetrized combinatorial star products} \label{subsubsec:convergence:symmetrized}

In order to give an explicit formula for the symmetrized combinatorial star product quantizing the log-canonical Poisson structure (see Example~\ref{example:symmetrizedStarProdForLogCanononical}),
recall the definition of the \emph{$q$-multinomial coefficients}
\begin{equation} \label{eq:qmultinomial}
	\binom {\abs K} {K}_q = \frac{[\abs K]_q!}{[K_1]_q! \dots [K_d]_q! }
\end{equation}
where $K \in \mathbb N_0^d$ is a multi-index, and $[\argument]_q!$ are the {$q$-factorials} defined by $[0]_q! = 1$ and
\begin{equation*}
	[k]_q! = [k]_q [k-1]_q \dots [1]_q 
	\quad\text{with}\quad
	[k]_q = 1 + q + \dots + q^{k-1}
\end{equation*}
for any $k \in \mathbb N$.
The $q$-multinomial coefficients compute a weighted sum of all possible ways to form words with certain letters. 
More precisely, $\smash{\binom {\abs K} {K}_q} = \sum_{w} q^{\mathrm{inv}(w)}$ 
where the sum is over all words $w$ with $K_1$ many $1$'s, $K_2$ many $2$'s, and so on,  
and $\mathrm{inv}(w)$ denotes the minimum number of inversions (exchanging two consecutive letters) 
needed to change the word $1^{K_1} \dots d^{K_d}$ to $w$ \cite[Prop.~1.3.17]{stanley}.
In particular, $\binom{\abs K}{K}_q$ is a polynomial in $q$ with non-negative integer coefficients and constant term $1$.

\begin{proposition}
\label{proposition:symmetrized}
	Let $\mathbb R^d$ be endowed with the log-canonical Poisson structure and assume that $q = 1 + \I t + \mathrm O (t^2)$. 
	The symmetrized combinatorial star product from 
	Example~\ref{example:symmetrizedStarProdForLogCanononical} is given by the formula
	\begin{equation} \label{eq:symmetrizedProduct}
		x^K \varstar x^L = 
		\frac{\binom{\abs{K+L}}{K+L}}{\binom{\abs K}{K} \binom{\abs L}{L}}
		\frac{\binom{\abs K}{K}_{q} \binom{\abs 
				L}{L}_{q}}{\binom{\abs{K+L}}{K+L}_{q}}
		q^{\sum_{1 \leq i < j \leq d} K_j L_i} x^{K+L} \punkt
	\end{equation}
\end{proposition}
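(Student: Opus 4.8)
The plan is to prove the formula \eqref{eq:symmetrizedProduct} by reducing it to a statement about the two isomorphisms $\rho$ and $\sigma$ from \eqref{rho} and \eqref{sigma}, and then to a combinatorial identity for $q$-multinomial coefficients. By Definition~\ref{definition:symmetrized} we have $x^K \varstar x^L = \sigma^{-1}(\sigma(x^K)\cdot\sigma(x^L))$, while the ordinary combinatorial star product satisfies $x^K \star x^L = \rho^{-1}(\rho(x^K)\cdot\rho(x^L))$, with $\rho(x^K) = x^K$ (the standard-ordered monomial) and $\sigma(x^K)$ the fully symmetrized average of the $\lvert K\rvert!/(K_1!\dots K_d!)$ orderings of the letters of $x^K$. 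The key observation is that in the quotient algebra $\widetilde A$ for the log-canonical structure, every word in the letters $x_1,\dots,x_d$ reduces to the standard-ordered monomial $x^J$ (for the appropriate multi-index $J$) times a power of $q$ equal to $q^{\mathrm{inv}(w)}$, where $w$ is the word and $\mathrm{inv}(w)$ counts inversions, exactly as in \eqref{eq:locCanonicalPoissonStructure:formulaForStarInRn}. Hence $\sigma(x^K) = \bigl(\binom{\lvert K\rvert}{K}\bigr)^{-1}\binom{\lvert K\rvert}{K}_{q}\, x^K$ inside $\widetilde A$, using the combinatorial description of $q$-multinomials recalled before the proposition: averaging over all $\binom{\lvert K\rvert}{K}$ orderings of the letters and collecting the $q$-weights gives precisely $\binom{\lvert K\rvert}{K}_q$.

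Next I would compute the product $\sigma(x^K)\cdot\sigma(x^L)$ in $\widetilde A$. Using the previous step and then \eqref{eq:locCanonicalPoissonStructure:formulaForStarInRn} to reorder $x^K x^L$ into standard form,
\begin{equation*}
\sigma(x^K)\cdot\sigma(x^L)
= \frac{\binom{\abs K}{K}_q\binom{\abs L}{L}_q}{\binom{\abs K}{K}\binom{\abs L}{L}}\,
q^{\sum_{i<j}K_j L_i}\, x^{K+L}\,.
\end{equation*}
Applying $\sigma^{-1}$ amounts to rewriting $x^{K+L}$ (the standard-ordered monomial, i.e.\ $\rho$ of $x^{K+L}$) in terms of $\sigma(x^{K+L})$, which by the first step means multiplying by $\binom{\abs{K+L}}{K+L}\big/\binom{\abs{K+L}}{K+L}_q$. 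Combining these factors yields exactly \eqref{eq:symmetrizedProduct}. Throughout, one must make sure the $q$-dependent quantities are interpreted as formal power series in $t$ via $q = 1 + \I t + \mathrm O(t^2)$; since every $q$-multinomial is a polynomial in $q$ with constant term $1$, the ratios are well-defined invertible elements of $\mathbb C\llrr t$, so all manipulations are legitimate.

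The main obstacle I anticipate is the bookkeeping in the first step: one needs a clean, correct statement that $\sigma(x^K)$, computed as the image in $\widetilde A$ of the symmetrized sum over $\mathfrak S_{\abs K}$ of orderings of the multiset of letters of $x^K$, equals the scalar $\binom{\abs K}{K}_q\big/\binom{\abs K}{K}$ times $x^K$. The subtlety is matching the normalization $\tfrac1{k!}\sum_{s\in\mathfrak S_k}$ in \eqref{sigma} (a sum over \emph{all} $k!$ permutations, many of which produce the same word when letters repeat) against the sum $\sum_w q^{\mathrm{inv}(w)}$ over \emph{distinct} words in the combinatorial description of $\binom{\abs K}{K}_q$; each distinct word is hit exactly $K_1!\cdots K_d!$ times, and the standard-order word $x^K$ itself reduces with the trivial weight $q^0 = 1$, so the stabilizer factor $K_1!\cdots K_d!$ combined with $1/\abs K!$ produces the $1/\binom{\abs K}{K}$ prefactor. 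Once this identity is pinned down, the rest is a short formal computation, and one may optionally sanity-check it against the formulas for $x_i\varstar x_j$ displayed in Example~\ref{example:symmetrizedStarProdForLogCanononical}.
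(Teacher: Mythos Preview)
Your proposal is correct and follows essentially the same route as the paper: the key identity $\sigma(x^K) = \binom{\lvert K\rvert}{K}_q\big/\binom{\lvert K\rvert}{K}\,\rho(x^K)$ in $\widetilde A$ (which the paper states as $\binom{\lvert K\rvert}{K}_q\,\rho(x^K) = \binom{\lvert K\rvert}{K}\,\sigma(x^K)$, invoking only ``the combinatorial interpretation of the $q$-multinomial coefficients''), followed by the short chain $\sigma^{-1}(\sigma(x^K)\sigma(x^L)) \to \sigma^{-1}(\rho(x^K)\rho(x^L)) \to \sigma^{-1}(\rho(x^{K+L})) \to x^{K+L}$. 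Your explicit treatment of the stabilizer factor $K_1!\cdots K_d!$ and the well-definedness of the ratios in $\mathbb C\llrr t$ is more detailed than the paper's one-line justification, but the argument is the same.
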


\begin{proof}
	We obtain
	$\binom {\abs K}{K}_{q} \rho(x^K) = \binom {\abs K}{K} \sigma(x^K)$
	from the combinatorial interpretation of the $q$-multinomial coefficients.
	Consequently,
	\begin{flalign*}
		&&x^K \varstar x^L &= \sigma^{-1} (\sigma(x^K) \sigma(x^L)) 
		\\&& &= 
		\frac{\binom{\abs K}{K}_{q} \binom{\abs L}{L}_{q}}{\binom{\abs K}{K} \binom{\abs 
				L}{L}} \sigma^{-1} (\rho(x^K) \rho(x^L)) &&
		\\&& &= 
		\frac{\binom{\abs K}{K}_{q} \binom{\abs L}{L}_{q}}{\binom{\abs K}{K} \binom{\abs L}{L}} q^{\sum_{1 \leq i < j \leq d} K_j L_i} \sigma^{-1} (\rho(x^{K+L})) &&
		\\&& &= 
		\frac{\binom{\abs K}{K}_{q} \binom{\abs L}{L}_{q} \binom{\abs{K+L}}{K+L}}{\binom{\abs K}{K} \binom{\abs L}{L} \binom{\abs{K+L}}{K+L}_{q}} q^{\sum_{1 \leq i < j \leq d} K_j L_i} x^{K+L} \punkt && \qedhere
	\end{flalign*}
\end{proof}

Assume that $q$ is the formal power series expansion of a holomorphic function $q \in \Hol(\Omega)$.
The $q$-multinomial coefficients are polynomials in $q$ which can only vanish at roots of unity (but not at $1$). It follows that the coefficients in \eqref{eq:symmetrizedProduct} are meromorphic functions on $\Omega$,
with poles only at $P \coloneqq \lbrace \hbar \in \Omega \mid q(\hbar) \neq 1$ and $q(\hbar)^n = 1$ for some $n \in \mathbb N \rbrace$.
Thus we can evaluate $\varstar$ to a product $\varstar_\hbar \colon \Pol(\mathbb R^d) \times \Pol(\mathbb R^d) \to \Pol(\mathbb R^d)$
whenever $\hbar \in \Omega \setminus P$.
Note that this is different from the situation in the previous sections, where the star products could be evaluated for all $\hbar \in \Omega$.

\begin{remark}
	The proof of Proposition~\ref{proposition:symmetrized} shows that the map $T \colon \Pol(\mathbb R^d) \to \Pol(\mathbb R^d)$,
	obtained by extending $x^K \mapsto \binom{\abs K}{K}{}_{q} \binom{\abs K}{K}{}^{-1} x^K$ linearly,
	is an equivalence transformation between the standard and symmetrized combinatorial star products,
	in the sense that $f \varstar g = T^{-1}(T f \star T g)$. 
	We will see in \S\ref{sec:continuity} that although the standard and symmetrized combinatorial star products are equivalent star products quantizing the same Poisson structure, they have different continuity properties for $\abs{q(\hbar)} > 1$, as the equivalence transformation $T$ fails to be continuous for the relevant topologies on $\Pol(\mathbb R^d)$.
\end{remark}

\section{Continuity} \label{sec:continuity}

In this section we extend the combinatorial star products introduced in \S\ref{sec:starproducts} to bigger function spaces, using the observation that a continuous bilinear map of locally convex topological vector spaces extends to the completion (see Lemma \ref{lemma:continuous}). Using convergence results of combinatorial star products (cf.\ \S\ref{subsec:convergence}) we may evaluate the combinatorial star product $\star$ to $\star_\hbar$ for some complex value of $\hbar$ and then proceed to find locally convex topologies on $\Pol(\mathbb R^d)$ with respect to which $\star_\hbar$ is continuous. Then $\star_\hbar$ will extend to the completion, so that the objective is to find topologies for which the completion is as large as possible.

This approach has been used by S.~Waldmann and collaborators in several examples for Poisson structures which are of constant or linear type and in \S\ref{subsec:continuity:weylandgutt} we review the definition of the so-called \emph{\TRtop} used for this purpose. This topology depends on a parameter $R \geq 0$ and the Moyal--Weyl and Gutt star products, which quantize constant and linear Poisson structures respectively, are continuous with respect to the {\TRtop} only for certain ranges of $R$.

The subsequent sections \S\ref{subsec:continuity:combinatorial:general}--\S\ref{subsec:continuity:wick} contain the main results of this article: we find locally convex topologies for which the combinatorial star products quantizing nonlinear Poisson structures are continuous. In \S\ref{subsec:continuity:wick} we also show the existence of continuous positive linear functionals for the combinatorial star products of Wick type from Example~\ref{example:complexlogcanonical}, which allows us to represent the strict deformation quantizations faithfully on a pre-Hilbert space by means of the GNS-construction (Theorem \ref{theorem:GNS}).

It might be surprising that for nonlinear Poisson structures there are often combinatorial star products with better continuity properties than the Moyal--Weyl or Gutt star products, meaning that they extend to larger function spaces. We show the following:
\begin{enumerate}
	\item Strict star products $\star_\hbar \colon \Pol(\mathbb R^d) \times \Pol(\mathbb R^d) \to \Pol(\mathbb R^d)$ for which the product of two homogeneous polynomials of degrees $d_1$ and $d_2$ is a sum of homogeneous polynomials of degrees $\geq d_1 + d_2$ can be extended to formal power series in the coordinates (Proposition~\ref{proposition:continuity:formal}). Such products can often be obtained by quantizing polynomial Poisson structures with vanishing constant and linear components.
	
	\item \label{differences2} There are combinatorial star products, for example those quantizing homogeneous quadratic Poisson structures, which are either continuous with respect to the {\TRtop} for all $R \geq 0$ or for no $R$.
	In the latter case, we need to find finer topologies with respect to which the combinatorial star product is continuous.
	One such topology that works in great generality is introduced in \S\ref{subsec:continuity:combinatorial:general}.
	It has, however, a relatively small completion.
	\item While the star products in all examples discussed in \S\ref{sec:starproducts} can be extended to the completion in \eqref{differences2} one can typically extend them to much larger algebras when the parameters in the construction, like the formal power series $q$,
	are suitably chosen. We introduce the corresponding locally convex topologies in \S\ref{subsec:continuity:combinatorial:examples}
	and discuss the examples of \S\ref{sec:starproducts} in detail.
\end{enumerate}

Throughout we use the following standard result.

\begin{lemma}
	\label{lemma:continuous}
	Let $U, V, W$ be locally convex topological vector spaces.
	A bilinear map $\mu \colon U \times V \to W$ is continuous if and only if for every continuous seminorm $\norm{\argument}_\alpha$ on $W$ there are continuous seminorms $\norm{\argument}_\beta$ on $U$ and $\norm{\argument}_\gamma$ on $V$ such that
	\begin{equation*}
		\norm{\mu (u, v)}_\alpha \leq \norm{u}_\beta \norm{v}_\gamma \punkt
	\end{equation*}
	If $\mu$ is continuous, then $\mu$ extends to a unique continuous bilinear map $\hat \mu \colon \hat U \times \hat V \to \hat W$,
	where $\hat U$, $\hat V$, and $\hat W$ denote the completions of $U$, $V$, and $W$, respectively.
\end{lemma}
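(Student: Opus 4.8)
The plan is to prove the two assertions by the standard locally convex space arguments, using only that finite intersections of seminorm balls form a neighbourhood basis of the origin, that a continuous seminorm is uniformly continuous and hence extends uniquely to a continuous seminorm on the completion, and that $U \times V$ is dense in $\hat U \times \hat V = \widehat{U \times V}$ with $\hat W$ complete and Hausdorff.

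For the equivalence, the direction ``$\Leftarrow$'' is immediate: assuming the seminorm estimate, I would check joint continuity of $\mu$ at an arbitrary point $(u_0,v_0)$ by writing
\[
\mu(u,v) - \mu(u_0,v_0) = \mu(u-u_0,\,v-v_0) + \mu(u-u_0,\,v_0) + \mu(u_0,\,v-v_0)
\]
and bounding the $\lVert\cdot\rVert_\alpha$-seminorm of the left side by $\lVert u-u_0\rVert_\beta\lVert v-v_0\rVert_\gamma + \lVert u-u_0\rVert_\beta\lVert v_0\rVert_\gamma + \lVert u_0\rVert_\beta\lVert v-v_0\rVert_\gamma$; since the balls $\{\lVert u-u_0\rVert_\beta < \delta\}$ and $\{\lVert v-v_0\rVert_\gamma < \delta\}$ are neighbourhoods, each term is made arbitrarily small. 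For ``$\Rightarrow$'': given a continuous seminorm $\lVert\cdot\rVert_\alpha$ on $W$, the set $\{w : \lVert w\rVert_\alpha < 1\}$ is a $0$-neighbourhood, so continuity of $\mu$ at $(0,0)$ produces continuous seminorms $\lVert\cdot\rVert_\beta$ on $U$ and $\lVert\cdot\rVert_\gamma$ on $V$ such that, after rescaling each by a positive constant, $\lVert u\rVert_\beta < 1$ and $\lVert v\rVert_\gamma < 1$ imply $\lVert\mu(u,v)\rVert_\alpha < 1$. The desired estimate $\lVert\mu(u,v)\rVert_\alpha \le \lVert u\rVert_\beta\lVert v\rVert_\gamma$ then follows by the usual homogeneity trick: for real $s > \lVert u\rVert_\beta$ and $t > \lVert v\rVert_\gamma$ apply the implication to $u/s$ and $v/t$ and use bilinearity to get $\lVert\mu(u,v)\rVert_\alpha < st$, then let $s \downarrow \lVert u\rVert_\beta$ and $t \downarrow \lVert v\rVert_\gamma$ (the case of a vanishing seminorm being covered by letting $s$ or $t$ tend to $0$).

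For the extension statement, given $(\hat u,\hat v) \in \hat U \times \hat V$ I would choose a net $(u_i,v_i)$ in $U \times V$ converging to it. Denoting again by $\lVert\cdot\rVert_\beta, \lVert\cdot\rVert_\gamma, \lVert\cdot\rVert_\alpha$ the extensions to the completions of the seminorms appearing in a continuity estimate for $\mu$, the identity $\mu(u_i,v_i) - \mu(u_j,v_j) = \mu(u_i-u_j,\,v_i) + \mu(u_j,\,v_i-v_j)$ yields
\[
\lVert\mu(u_i,v_i) - \mu(u_j,v_j)\rVert_\alpha \le \lVert u_i-u_j\rVert_\beta\,\lVert v_i\rVert_\gamma + \lVert u_j\rVert_\beta\,\lVert v_i-v_j\rVert_\gamma,
\]
and since the convergent nets $(\lVert u_i\rVert_\beta)$, $(\lVert v_i\rVert_\gamma)$ are bounded while $(u_i)$, $(v_i)$ are Cauchy, the right side is eventually small; hence $(\mu(u_i,v_i))$ is Cauchy in $W$ and converges in $\hat W$, and I set $\hat\mu(\hat u,\hat v)$ to be this limit. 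Independence of the approximating net follows from the same estimate applied termwise along a common refinement of two such nets; bilinearity of $\hat\mu$ follows by passing to the limit in the bilinearity relations for $\mu$, using continuity of the vector space operations; and passing to the limit in $\lVert\mu(u_i,v_i)\rVert_\alpha \le \lVert u_i\rVert_\beta\lVert v_i\rVert_\gamma$ gives $\lVert\hat\mu(\hat u,\hat v)\rVert_\alpha \le \lVert\hat u\rVert_\beta\lVert\hat v\rVert_\gamma$, so $\hat\mu$ is continuous by the direction already proved. That $\hat\mu$ extends $\mu$ is seen using constant nets, and uniqueness is immediate because two continuous maps into the Hausdorff space $\hat W$ that agree on the dense subset $U \times V$ of $\hat U \times \hat V$ must coincide.

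The whole argument is routine; the only places that demand a little care are the rescaling-plus-homogeneity step in ``$\Rightarrow$'' and the bookkeeping that shows $\hat\mu(\hat u,\hat v)$ is independent of the chosen approximating net, and neither of these is a genuine obstacle.
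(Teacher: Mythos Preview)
Your proof is correct and complete; the paper itself does not prove this lemma but simply introduces it as a ``standard result'' and immediately proceeds to describe how it is applied. Your argument is exactly the routine locally convex space proof one would expect, so there is nothing substantive to compare.
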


We typically apply this lemma with $U = V = W = \Pol(\mathbb R^d)$ and $\mu = \star_\hbar$, where $\star_\hbar$ is obtained from some combinatorial star product by evaluating $t \mapsto \hbar$ as in \S\ref{subsec:convergence}. We usually abuse notation and write $\mu = \star_\hbar$ instead of $\hat \mu = \hat{\star}_\hbar$ also for the extended map.

\begin{convention} \label{convention}
	Throughout this section, $\Omega \subset \mathbb C$ shall denote an open connected neighbourhood of $0 \in \mathbb C$ 
	and $\Hol (\Omega)$ the algebra of holomorphic functions on $\Omega$. We often tacitly pass from a holomorphic function $q \in \Hol (\Omega)$ depending on a complex variable $\hbar \in \Omega$ to its power series expansion around $\hbar = 0$ (which determines $q$ uniquely) with formal variable $t$.
	
For a star product $\star_\hbar \colon V \times V \to V$ quantizing a Poisson structure $\eta$, where $\hbar$ ranges over $\Omega$, 
and fixed $f, g \in V$ we define maps $\mu_{f,g}, \Delta_{f,g} \colon \Omega \to V$ by
	\begin{equation} \label{eq:productAndCommutatorMaps}
		\mu_{f,g}(\hbar) \coloneqq f \star_\hbar g
	\qquad\text{and}\qquad
	\Delta_{f,g}(\hbar) \coloneqq \begin{cases}
		\frac 1 {\I\hbar} (f \star_\hbar g - g \star_\hbar f) &\text{ if $\hbar \neq 0$} \\
		\lbrace f, g \rbrace_\eta &\text{ if $\hbar = 0$} \punkt
		\end{cases}
\end{equation}
Typically $V$ is some completion of $\Pol(\mathbb R^d)$.
\end{convention}

Our definition of strict deformation quantization (Definition \ref{definition:strict}) requires these maps to be continuous. However, we often show that $\mu_{f,g}$ and $\Delta_{f,g}$ are holomorphic, where a map $f \colon \Omega \to V$ into a Fréchet space $V$
is called \emph{holomorphic}\footnote{The definition of holomorphy through linear functionals is sometimes referred to as ``weak holomorphy''. 
	For Fr\'echet spaces, it coincides with the notion of ``strong holomorphy'' defined through the existence of the limit of the usual difference quotient.} if $\origphi \circ f \colon \Omega \to \mathbb C$ is holomorphic for all continuous linear functionals $\origphi \in V^*$.

\subsection{Continuity of the Moyal--Weyl and Gutt star products} \label{subsec:continuity:weylandgutt}

In \cite{espositostaporwaldmann,schoetzwaldmann,waldmann1} continuity estimates for the Moyal--Weyl and Gutt star products were obtained (see also \cite{waldmann2} for a review).
In these articles, the relevant topology is the so-called \TRtop, because it depends on a real parameter $R$ and is obtained by viewing $\Pol (V) \simeq \mathrm S (V^*) \subset \TR (V^*)$ as a subspace, where $\TR (V^*)$ is a certain locally convex topology on the tensor algebra $\mathrm T (V^*) = \bigoplus_{n \geq 0} (V^*)^{\otimes n}$ on the dual of a vector space $V$. We work with the case when $V \simeq \mathbb R^d$ is finite-dimensional, in which case the {\TRtop} restricts to the usual Euclidean topology on the tensor powers $(V^*)^{\otimes n} \subset \mathrm T (V^*)$, and the parameter $R$ allows one to fine-tune the topology on $\Pol (\mathbb R^d)$ according to the Poisson structure at hand.

\begin{definition}[{\cite[Def.~3.5]{waldmann1}}]
\label{definition:TR}
	Let $R \in [0, \infty)$.
	We write $\Pol_{\TR}(\mathbb R^d)$ for the locally convex vector space obtained by endowing $\Pol(\mathbb R^d)$ with the {\TRtop}, defined by the family of norms 
	$\lbrace \norm \argument^{\TR}_C \mid C \in (0,\infty) \rbrace$, where
	\begin{equation}
		\norm\argument_C^{\TR} \colon \Pol(\mathbb R^d) \to [0, \infty) \komma
		\qquad
		\norm[\Big]{\sum_{L \in \mathbb N_0^d} f_L x^L}_{C}^{\TR} \coloneqq \sum_{L \in \mathbb N_0^d} \abs{f_L} C^{\abs L} \cdot (\abs L!)^R 
	\end{equation}		
	and we write $\smash{\widehat{\Pol}_{\TR} (\mathbb R^d)}$ for its completion.
\end{definition}

Note that the {\TRtop} is coarser than the $\TR[R']$\=/topology whenever $R \leq R'$. For the completions we thus have $\widehat{\Pol}_{\TR[R']}(\mathbb R^d) \subset \widehat{\Pol}_{\TR}(\mathbb R^d)$. 

The following two theorems summarize the continuity results for the strict standard-ordered Weyl product $\star^{\mathrm{std}}_\hbar$, the strict Moyal--Weyl star product $\varstar^{\mathrm W}_\hbar$, and the strict Gutt star product $\varstar^{\mathrm G}_\hbar$, obtained from respectively $\star^{\mathrm{std}}$ (introduced before Theorem~\ref{theorem:graphical}), $\varstar^{\mathrm W}$, and $\varstar^{\mathrm G}$ (see Proposition~\ref{proposition:moyalWeylAndGutt}) by evaluating $t \mapsto \hbar \in \mathbb C$.

\begin{theorem}[\cite{waldmann1}]
	 \label{theorem:continuity:weyl}
	Let $R \geq \frac 1 2$ and $\hbar \in \mathbb C$.
	Then the standard-ordered Weyl and the Moyal--Weyl products $\star^{\mathrm{std}}_\hbar, \varstar^{\mathrm W}_{\hbar} \colon \Pol_{\TR}(\mathbb R^d) \times \Pol_{\TR}(\mathbb R^d) \to \Pol_{\TR}(\mathbb R^d)$ are continuous and therefore extend uniquely to continuous products
	$\star^{\mathrm{std}}_{\hbar}, \varstar^{\mathrm W}_{\hbar} \colon \widehat{\Pol}_{\TR}(\mathbb R^d) \times \widehat{\Pol}_{\TR}(\mathbb R^d) \to \widehat{\Pol}_{\TR}(\mathbb R^d)$. Moreover, for all $f, g \in \widehat{\Pol}_{\TR}(\mathbb R^d)$, the maps
	$\mu_{f,g}, \Delta_{f,g} \colon \mathbb C \to \widehat{\Pol}_{\TR}(\mathbb R^d)$, defined as in \eqref{eq:productAndCommutatorMaps}, are holomorphic.
\end{theorem}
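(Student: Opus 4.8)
The plan is to reduce the continuity statements, via Lemma~\ref{lemma:continuous}, to a single combinatorial estimate for products of monomials, and then to deduce the holomorphy of $\mu_{f,g}$ and $\Delta_{f,g}$ from that estimate together with the elementary observation that $x^K \star_\hbar x^L$ is a \emph{polynomial} in $\hbar$.

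First I would recall the closed bidifferential formulas: writing the coordinates on $\mathbb R^{2d}$ as positions $q_1,\dots,q_d$ and momenta $p_1,\dots,p_d$, one has $f \star^{\mathrm{std}}_\hbar g = \sum_{\alpha} \tfrac{(\I\hbar)^{\abs\alpha}}{\alpha!}\,(\partial_p^\alpha f)(\partial_q^\alpha g)$, with the analogous antisymmetrized sum for $\varstar^{\mathrm W}_\hbar$. Since $\partial^\alpha x^L = \tfrac{L!}{(L-\alpha)!}x^{L-\alpha}$, the product $x^K \star_\hbar x^L$ is a finite $\mathbb C$-linear combination of monomials $x^{K+L-2\alpha}$, in particular a polynomial in $\hbar$ of degree $\le \min(\abs K, \abs L)$. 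Because the $\TR$-norms are of $\ell^1$ type in the monomial coefficients, $\norm[\big]{\sum_L f_L x^L}_C^{\TR} = \sum_L \abs{f_L}\, C^{\abs L}(\abs L!)^R$, it suffices by Lemma~\ref{lemma:continuous} to produce, for every $C \in (0,\infty)$, a constant $C' = C'(C,\hbar,d,R)$ independent of $K,L$ with $\norm{x^K \star_\hbar x^L}_C^{\TR} \le \norm{x^K}_{C'}^{\TR}\,\norm{x^L}_{C'}^{\TR}$; summing against the rapidly decaying coefficients of arbitrary $f,g$ then yields joint continuity of $\star_\hbar$ on $\Pol_{\TR}(\mathbb R^d)$, and the (unique) extension to $\widehat{\Pol}_{\TR}(\mathbb R^d)$ is automatic.

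The monomial estimate is assembled from the Vandermonde-type bounds $\tfrac{1}{\alpha!}\tfrac{K!}{(K-\alpha)!} = \binom K\alpha \le \binom{\abs K}{\abs\alpha}$ and $\tfrac{L!}{(L-\alpha)!} \le \alpha!\binom{\abs L}{\abs\alpha}$ (with $\alpha!\le\abs\alpha!$ and at most $2^{\abs\alpha+d}$ multi-indices of a given length), giving $\sum_{\abs\alpha=n}\tfrac{1}{\alpha!}\tfrac{K!}{(K-\alpha)!}\tfrac{L!}{(L-\alpha)!} \le 2^{n+d}\,n!\,\binom{\abs K}{n}\binom{\abs L}{n}$; the crude bounds $\binom{\abs K}{n}\le 2^{\abs K}$, $\binom{\abs L}{n}\le 2^{\abs L}$; and the reabsorption of the output-degree weight. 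The last point is the \emph{main obstacle}: one needs $n!\,\bigl((\abs K+\abs L-2n)!\bigr)^R \le 2^{R(\abs K+\abs L)}(\abs K!)^R(\abs L!)^R$ for all $0\le n\le\min(\abs K,\abs L)$. For $R=\tfrac12$ this is the multinomial inequality $(n!)^2(\abs K+\abs L-2n)! \le (\abs K+\abs L)! \le 2^{\abs K+\abs L}\,\abs K!\,\abs L!$, valid since $\tfrac{(\abs K+\abs L)!}{(n!)^2(\abs K+\abs L-2n)!}$ is an integer $\ge1$; for $R>\tfrac12$ one interpolates using $(\abs K+\abs L-2n)! \le 2^{\abs K+\abs L}\,\abs K!\,\abs L!$. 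This is sharp: for $R<\tfrac12$ the inequality fails already for the maximally contracted terms ($\abs K=\abs L=n$, with $K$ concentrated in a single momentum and $L$ in the matching position direction), for which $\norm{x^K\star_\hbar x^L}_C^{\TR}\ge\abs\hbar^{\,n}\,n!$ while $\norm{x^K}_{C'}^{\TR}\norm{x^L}_{C'}^{\TR}$ only carries $(n!)^{2R}$ --- which is the structural reason the hypothesis $R\ge\tfrac12$ cannot be relaxed. Assembling the pieces and summing the geometric series in $n$ (using $C^{\abs K+\abs L-2n}\le C^{\abs K+\abs L}\max(1,C^{-2})^{\min(\abs K,\abs L)}$ to cover small $C$) gives the bound with a $C'$ that is, moreover, locally bounded as a function of $\hbar\in\mathbb C$. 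The same computation applies verbatim to $\varstar^{\mathrm W}_\hbar$ (alternatively, the equivalence transformation relating it to $\star^{\mathrm{std}}_\hbar$ is a differential operator of the same type, continuous for $R\ge\tfrac12$).

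For the holomorphy assertions I would argue at the level of scalar functions. Fix $f=\sum_K f_K x^K$, $g=\sum_L g_L x^L$ in $\widehat{\Pol}_{\TR}(\mathbb R^d)$ and a continuous linear functional $\phi$, so $\abs{\phi(v)}\le c_\phi\norm{v}^{\TR}_{C_\phi}$ for some $C_\phi$. Then $\phi\bigl(\mu_{f,g}(\hbar)\bigr)=\sum_{K,L}f_K g_L\,\phi(x^K\star_\hbar x^L)$ is a series of polynomials in $\hbar$; on any disc $\abs\hbar\le\rho$ the monomial estimate gives $\sup_{\abs\hbar\le\rho}\abs{\phi(x^K\star_\hbar x^L)}\le c_\phi\norm{x^K}^{\TR}_{\tilde C(\rho)}\norm{x^L}^{\TR}_{\tilde C(\rho)}$ with $\tilde C(\rho)\coloneqq\sup_{\abs\hbar\le\rho}C'(\hbar)<\infty$, so the Weierstrass $M$-test --- with summable majorant $\sum_{K,L}\abs{f_K}\abs{g_L}\norm{x^K}^{\TR}_{\tilde C(\rho)}\norm{x^L}^{\TR}_{\tilde C(\rho)}=\norm{f}^{\TR}_{\tilde C(\rho)}\norm{g}^{\TR}_{\tilde C(\rho)}<\infty$ --- shows uniform convergence on $\abs\hbar\le\rho$; as $\rho$ is arbitrary, $\phi\circ\mu_{f,g}$ is entire, hence $\mu_{f,g}$ is holomorphic. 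For $\Delta_{f,g}$ one uses that $x^K\star_\hbar x^L-x^L\star_\hbar x^K$ is a polynomial in $\hbar$ vanishing at $\hbar=0$, so each summand of $\phi\bigl(\Delta_{f,g}(\hbar)\bigr)=\sum_{K,L}f_K g_L\,\tfrac{1}{\I\hbar}\bigl(\phi(x^K\star_\hbar x^L)-\phi(x^L\star_\hbar x^K)\bigr)$ is entire in $\hbar$, and the maximum modulus principle turns the bound on $\abs\hbar=\rho$ into one on $\abs\hbar\le\rho$ with an extra factor $\rho^{-1}$; the $M$-test again gives uniform convergence and holomorphy (the apparent singularity at $0$ being removable), while the value at $\hbar=0$ is $\sum_{K,L}f_K g_L\,\tfrac{1}{\I}\bigl(B_1(x^K,x^L)-B_1(x^L,x^K)\bigr)=\{f,g\}_\eta$ by the defining property $B_1(f,g)-B_1(g,f)=\I\{f,g\}_\eta$ of formal star products (Definition~\ref{definition:formalStarProduct}) together with continuity of the Poisson bracket, matching the definition of $\Delta_{f,g}$ in Convention~\ref{convention}.
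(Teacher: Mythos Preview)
The paper does not prove this theorem; it is quoted from \cite{waldmann1} as background for the later, new results, so there is nothing in the present paper to compare your argument against. That said, your reconstruction is sound and follows the standard route one finds in \cite{waldmann1}: reduce via the $\ell^1$-structure of $\norm{\argument}_C^{\TR}$ and Lemma~\ref{lemma:continuous} to a monomial estimate, and control the latter using the explicit bidifferential formula.

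Your identification of the combinatorial core --- the inequality $n!\,\bigl((\abs K+\abs L-2n)!\bigr)^R \le 2^{R(\abs K+\abs L)}(\abs K!)^R(\abs L!)^R$, which for $R=\tfrac12$ is the multinomial bound $(n!)^2(\abs K+\abs L-2n)! \le (\abs K+\abs L)!$ and for $R>\tfrac12$ follows since $(n!)^{1/R}\le(n!)^2$ --- is exactly the mechanism that forces $R\ge\tfrac12$, and your failure example for $R<\tfrac12$ (fully contracted term $\abs K=\abs L=n$) is the standard obstruction. For the Moyal--Weyl case your remark that the equivalence transformation (a Neumaier-type exponential of a second-order differential operator) is $\TR$-continuous for $R\ge\tfrac12$ is precisely how \cite[Prop.~5.9]{waldmann1} transfers the estimate. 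Your holomorphy argument, producing a $C'$ that is locally bounded in $\hbar$ and then invoking the Weierstrass $M$-test, is a hands-on instance of Lemma~\ref{lemma:uniformEstimates:holomorphicMapsClosed} of the present paper; the treatment of $\Delta_{f,g}$ via the maximum modulus principle to propagate the bound from $\abs\hbar=\rho$ across the removable singularity at $0$ is clean and correct.
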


Note that in \cite{schoetzwaldmann,waldmann1} generalizations to the infinite-dimensional setting are also proved, but the above version of the theorem is sufficient for our purposes and least technical to state. The following theorem gives an analogous result for linear Poisson structures, which can also be generalized to infinite dimensions if one imposes a certain technical condition on the Lie algebra.

\begin{theorem}[\cite{espositostaporwaldmann}] 
	\label{theorem:continuity:gutt}
	Let $\mathfrak g$ be a finite-dimensional Lie algebra, $R \geq 1$, and $\hbar \in \mathbb C$.
	Then the Gutt star product $\varstar^{\mathrm G}_{\hbar} \colon \Pol_{\TR}(\mathfrak g^*) \times \Pol_{\TR}(\mathfrak g^*) \to \Pol_{\TR}(\mathfrak g^*)$
	is continuous and therefore extends to a continuous product
	$\varstar^{\mathrm G}_{\hbar} \colon \widehat{\Pol}_{\TR}(\mathfrak g^*) \times \widehat{\Pol}_{\TR}(\mathfrak g^*) \to \widehat{\Pol}_{\TR}(\mathfrak g^*)$.  The maps
	$\mu_{f,g}, \Delta_{f,g} \colon \mathbb C \to \widehat{\Pol}_{\TR}(\mathfrak g^*)$ defined in \eqref{eq:productAndCommutatorMaps} are holomorphic for all $f, g \in \widehat{\Pol}_{\TR}(\mathfrak g^*)$.
\end{theorem}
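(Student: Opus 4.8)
The plan is to follow the route of Theorem~\ref{theorem:continuity:weyl}: reduce the statement to a single weighted $\ell^1$-estimate for the Gutt product of monomials, feed it into Lemma~\ref{lemma:continuous}, and then upgrade continuity to holomorphy by approximation. First I would fix a basis $\xi_1,\dots,\xi_d$ of $\mathfrak g$ with structure constants $c_{ij}^k$ given by $[\xi_i,\xi_j]=\sum_k c_{ij}^k\xi_k$, identify $\Pol(\mathfrak g^*)\simeq\mathrm S(\mathfrak g)$, and recall (Proposition~\ref{proposition:moyalWeylAndGutt}) that $\varstar^{\mathrm G}$ is the symmetrized combinatorial star product attached to $\widetilde\phi(\xi_j\xi_i)=\I t\,[\xi_j,\xi_i]$, i.e.\ the one obtained by reordering words via $\xi_j\xi_i\mapsto\xi_i\xi_j+\I t\,[\xi_j,\xi_i]$ and then symmetrizing. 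Since a reduction that lowers the degree lowers it by exactly one, the coefficient of $\xi^M$ in $\xi^K\varstar^{\mathrm G}_\hbar\xi^L$ is collected from reduction sequences with exactly $\abs K+\abs L-\abs M$ degree-lowering moves, so
\[
\xi^K \varstar^{\mathrm G}_\hbar \xi^L \;=\; \sum_{\abs M \le \abs K+\abs L} b_{K,L,M}\,\hbar^{\,\abs K+\abs L-\abs M}\,\xi^M
\]
is a finite sum with $\hbar$-independent coefficients $b_{K,L,M}$; in particular, for fixed polynomials $f,g$ the map $\hbar\mapsto f\varstar^{\mathrm G}_\hbar g$ is a polynomial with values in a finite-dimensional subspace of $\Pol(\mathfrak g^*)$, hence holomorphic.

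The technical heart is the estimate
\[
\abs{b_{K,L,M}} \;\le\; C_0^{\,\abs K+\abs L}\,\frac{\abs K!\,\abs L!}{\abs M!}\komma
\]
with $C_0$ depending only on $\dim\mathfrak g$ and $\max_{i,j}\sum_k\abs{c_{ij}^k}$ (the structure constants). I would prove it by bookkeeping the $\abs K+\abs L-\abs M$ degree-lowering moves leading to $\xi^M$: each contributes a factor $\le\max_{i,j}\sum_k\abs{c_{ij}^k}$, and the number of ways of interleaving them with the length-preserving swaps and redistributing the letters is a multinomial coefficient counting the ways to split $\xi^M$ into the letters surviving from $\xi^K$, the letters surviving from $\xi^L$, and the newly created ones --- which is $\le 3^{\abs M}$ --- times the factor $\abs K!\,\abs L!/\abs M!$ accounting for which original letters survive. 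Granting this, and using that there are only polynomially (in $n:=\abs K+\abs L$) many multi-indices $M$ with $\abs M\le n$,
\[
\sum_{M} \abs{b_{K,L,M}}\,\abs{\hbar}^{\,n-\abs M}\,C^{\abs M}(\abs M!)^R \;\le\; \bigl(C_0(1+\abs{\hbar})\bigr)^{n}\,\abs K!\,\abs L!\sum_{\abs M\le n} C^{\abs M}(\abs M!)^{R-1} \;\le\; (C')^{n}(\abs K!)^R(\abs L!)^R
\]
for $C'$ large enough (depending on $C$, $\hbar$, $R$, $\mathfrak g$), where in the last step one bounds $(\abs M!)^{R-1}\le(n!)^{R-1}$ and then $(n!)^{R-1}\le\bigl(2^n\abs K!\,\abs L!\bigr)^{R-1}$ via $n!\le 2^n\,\abs K!\,\abs L!$, absorbing the remaining polynomial factor into $C'$. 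This is exactly where the hypothesis $R\ge1$ enters --- the step $(\abs M!)^{R-1}\le(n!)^{R-1}$ needs $R\ge1$, and for $R<1$ one is left with an uncompensated factor $(\abs K!\,\abs L!)^{1-R}$ that outgrows every $D^{n}$, so the estimate, and with it the theorem, genuinely fails. I expect this combinatorial estimate to be the main obstacle.

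The remainder is routine. The displayed estimate reads $\norm{f\varstar^{\mathrm G}_\hbar g}_C^{\TR}\le\norm{f}_{C'}^{\TR}\norm{g}_{C'}^{\TR}$ for $f=\sum_K f_K\xi^K$, $g=\sum_L g_L\xi^L$ (using $\abs M\le\abs K+\abs L$ and $C\le C'$), so Lemma~\ref{lemma:continuous} gives continuity of $\varstar^{\mathrm G}_\hbar$ on $\Pol_{\TR}(\mathfrak g^*)$ and a unique continuous extension to $\widehat{\Pol}_{\TR}(\mathfrak g^*)$. For holomorphy of $\mu_{f,g}$, the polynomial case is settled above; for $f,g\in\widehat{\Pol}_{\TR}(\mathfrak g^*)$ one approximates by polynomials and observes that the estimate above is uniform for $\hbar$ in compact subsets of $\mathbb C$, so $\mu_{f,g}$ is a locally uniform limit of holomorphic maps and therefore holomorphic. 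Finally $f\varstar^{\mathrm G}_\hbar g-g\varstar^{\mathrm G}_\hbar f$ vanishes at $\hbar=0$ (as $\varstar^{\mathrm G}_0$ is the commutative product), so by the holomorphy of $\mu_{f,g}$ and $\mu_{g,f}$ together with a Cauchy-estimate argument $\Delta_{f,g}(\hbar)=\frac1{\I\hbar}(f\varstar^{\mathrm G}_\hbar g-g\varstar^{\mathrm G}_\hbar f)$ extends holomorphically across $\hbar=0$ with value $B_1(f,g)-B_1(g,f)=\{f,g\}_\eta$ there, in agreement with \eqref{eq:productAndCommutatorMaps}.
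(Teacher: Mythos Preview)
The paper does not prove this theorem; it is quoted from \cite{espositostaporwaldmann} as background for the continuity results that follow. Your overall architecture --- an estimate on monomials, Lemma~\ref{lemma:continuous} for continuity, and locally uniform approximation for holomorphy (as in Lemma~\ref{lemma:uniformEstimates:holomorphicMapsClosed}) --- matches both the paper's own pattern in later proofs and the strategy of the original reference. The part where you isolate the role of $R\ge1$ via $(\abs M!)^{R-1}\le(n!)^{R-1}$ and $n!\le 2^n\abs K!\,\abs L!$ is correct and is exactly where the hypothesis enters.

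The entire content of the theorem, however, sits in the bound $\abs{b_{K,L,M}}\le C_0^{\abs K+\abs L}\dfrac{\abs K!\,\abs L!}{\abs M!}$, and your justification of it is not a proof. Two concrete issues. First, the reduction picture you invoke (reordering words via $\xi_j\xi_i\mapsto\xi_i\xi_j+\I t[\xi_j,\xi_i]$) computes the \emph{standard-ordered} product $\star$, not the Gutt product $\varstar^{\mathrm G}=\sigma^{-1}(\sigma(\,\cdot\,)\cdot\sigma(\,\cdot\,))$; the phrase ``and then symmetrizing'' does not capture what $\sigma$ and $\sigma^{-1}$ actually do, so your bookkeeping of reduction sequences does not directly apply to $\varstar^{\mathrm G}$. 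Second, even for the standard-ordered product your counting is heuristic: a degree-lowering move inserts a new letter that may itself need to be moved past others, so the reduction tree branches in a way your ``multinomial $\le 3^{\abs M}$ times $\abs K!\,\abs L!/\abs M!$'' sketch does not control. In \cite{espositostaporwaldmann} the estimate is obtained by a different route --- working in the tensor algebra and using word-length estimates tied to the Baker--Campbell--Hausdorff series --- rather than by counting reduction paths. Since you already flag this step as ``the main obstacle'', the honest status of your write-up is: a correct reduction of the theorem to a single inequality, with that inequality left unproved.
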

In Theorems \ref{theorem:continuity:weyl} and \ref{theorem:continuity:gutt} the resulting algebras are not locally multiplicatively convex. In the next sections, we will see that certain homogeneous quadratic Poisson structures can be quantized by locally multiplicatively convex algebras.

\subsection{General continuity results for combinatorial star products} \label{subsec:continuity:combinatorial:general}

In this section we present two general continuity results for combinatorial star products. Our first general result (Proposition \ref{proposition:continuity:formal}) shows that if the combinatorial star product of any homogeneous polynomials of degrees $d_1$ and $d_2$ is a sum of homogeneous polynomials of degrees $\geq d_1 + d_2$, then the product can be extended to all formal power series in the coordinates $x_i$. Examples are quantizations of homogeneous quadratic Poisson structures. Our second general result (Theorem \ref{theorem:continuity:general}) shows that combinatorial star products satisfying some bound on the number of reductions are continuous with respect to the MacGyver topology (Definition \ref{def:norms:general}).

\subsubsection{A coarse topology: The adic topology}
\label{subsubsec:coarse}

Denote the ideal of $\Pol(\mathbb R^d) \simeq \mathbb C [x_1, \dots, x_d]$ that is generated by $x_1, \dots, x_d$ by $\mathfrak m$.
Recall that the $\mathfrak m$\=/adic topology of $\Pol(\mathbb R^d)$ is the one induced by the metric
$d_{\mathfrak m}(f,g) = 2^{-o(f-g)}$ where $o(f-g)$ denotes the largest integer $N \in \mathbb N_0$ such that $f-g \in \mathfrak m^N$, where by convention $o(0) = \infty$ and $d_{\mathfrak m}(f,f) = 0$. 
Note that $\Pol(\mathbb R^d)$ endowed with the $\mathfrak m$\=/adic topology is \emph{not} a topological vector space 
since the scalar multiplication is not continuous.
However, its completion as a metric space is still well-defined and coincides with the space $\mathbb C \llrr {x_1, \dots, x_d}$ of formal power series, which contains $\Pol(\mathbb R^d)$ as a dense subset.

\begin{proposition} \label{proposition:continuity:formal}
	Assume that $\star_{\hbar} \colon \Pol(\mathbb R^d) \times \Pol(\mathbb R^d) \to \Pol(\mathbb R^d)$ only increases the degree
	in the sense that the star product of a homogeneous polynomial of degree $d_1$ and a homogeneous polynomial of degree $d_2$ 
	is the sum of homogeneous polynomials of degrees $\geq d_1+d_2$.
	Then $\star_{\hbar}$ is uniformly continuous with respect to $d_{\mathfrak m}$ and extends to a uniformly continuous product
	$\star_{\hbar} \colon \mathbb C \llrr{x_1, \dots, x_d} \times \mathbb C \llrr{x_1, \dots, x_d} \to \mathbb C \llrr{x_1, \dots, x_d}$
	on formal power series.
\end{proposition}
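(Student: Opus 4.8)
The statement is essentially a routine application of the standard fact that a uniformly continuous map on a dense subset of a metric space extends uniquely to the completion, combined with a direct check that the degree-increasing property forces $\star_\hbar$ to respect the $\mathfrak m$-adic filtration. So the plan is: (1) reformulate ``increases degree'' as a statement that $\mathfrak m^a \star_\hbar \mathfrak m^b \subseteq \mathfrak m^{a+b}$; (2) use this to establish a quantitative uniform continuity estimate for $\star_\hbar$ with respect to $d_{\mathfrak m}$; (3) invoke completeness of $\mathbb C\llrr{x_1,\dots,x_d}$ (noted in the excerpt as the metric completion of $(\Pol(\mathbb R^d), d_{\mathfrak m})$) together with density of $\Pol(\mathbb R^d)$ to extend; (4) note that associativity and (if relevant) the unit property pass to the completion by continuity and density, so the extension is again a product.

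\textbf{Step 1: the filtration estimate.} Writing any $f \in \Pol(\mathbb R^d)$ as a sum of its homogeneous components, if $f \in \mathfrak m^a$ (i.e.\ $o(f) \geq a$) and $g \in \mathfrak m^b$, then by bilinearity $f \star_\hbar g$ is a sum of terms $f_{d_1} \star_\hbar g_{d_2}$ with $d_1 \geq a$, $d_2 \geq b$, and each such term is a sum of homogeneous polynomials of degree $\geq d_1 + d_2 \geq a + b$ by hypothesis. Hence $o(f \star_\hbar g) \geq o(f) + o(g)$, i.e.\ $f \star_\hbar g \in \mathfrak m^{a+b}$.

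\textbf{Step 2: uniform continuity.} I would show directly that $\star_\hbar$ is uniformly continuous on bounded sets in the sense needed for extension — more precisely, that it is uniformly continuous as a map $(\Pol \times \Pol, d_{\mathfrak m} \times d_{\mathfrak m}) \to (\Pol, d_{\mathfrak m})$. Given $f, f', g, g'$, write
\begin{equation*}
f \star_\hbar g - f' \star_\hbar g' = (f - f') \star_\hbar g + f' \star_\hbar (g - g') \punkt
\end{equation*}
If $o(f - f') \geq N$ then $o\big((f-f') \star_\hbar g\big) \geq N + o(g) \geq N$, since $o(g) \geq 0$; similarly for the second term. Hence $o(f \star_\hbar g - f' \star_\hbar g') \geq \min\{o(f-f'), o(g-g')\}$, which gives $d_{\mathfrak m}(f \star_\hbar g, f' \star_\hbar g') \leq \max\{d_{\mathfrak m}(f,f'), d_{\mathfrak m}(g,g')\}$. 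This is uniform continuity (indeed a $1$-Lipschitz-type bound) with no dependence on the points, so in particular $\star_\hbar$ sends Cauchy sequences to Cauchy sequences.

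\textbf{Step 3 and the one genuine subtlety.} The extension to $\mathbb C\llrr{x_1,\dots,x_d} \times \mathbb C\llrr{x_1,\dots,x_d}$ now follows: given formal power series $F, G$, pick sequences $f_n \to F$, $g_n \to G$ in $\Pol(\mathbb R^d)$ (e.g.\ truncations), note $(f_n \star_\hbar g_n)$ is Cauchy by Step 2 hence converges to a limit which, again by Step 2, is independent of the chosen sequences; call it $F \star_\hbar G$. Bilinearity and the estimate $o(F \star_\hbar G) \geq o(F) + o(G)$ pass to the limit. The one point that deserves care — and the closest thing to an obstacle — is that $d_{\mathfrak m}$ is \emph{not} a topological-vector-space metric (scalar multiplication is discontinuous, as the excerpt stresses), so one cannot simply cite ``continuous bilinear maps extend'' as in Lemma~\ref{lemma:continuous}; one must run the Cauchy-sequence argument by hand using the metric. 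But since the estimate in Step 2 is a clean bound on $o(\argument)$ and $o$ is additive under $\star_\hbar$, the bookkeeping is straightforward. Finally, associativity of the extended product on $\mathbb C\llrr{x_1,\dots,x_d}$ follows because both $(F \star_\hbar G) \star_\hbar H$ and $F \star_\hbar (G \star_\hbar H)$ are continuous in each argument (by Step 2) and agree on the dense subset $\Pol(\mathbb R^d)$; the unit $1$ likewise remains a unit. This completes the proof.
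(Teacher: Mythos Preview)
Your proof is correct and follows essentially the same approach as the paper: both establish $o(f \star_\hbar g) \geq o(f) + o(g)$ from the degree-increasing hypothesis, use an add-and-subtract decomposition to get the $1$-Lipschitz bound $d_{\mathfrak m}(f \star_\hbar g, f' \star_\hbar g') \leq \max\{d_{\mathfrak m}(f,f'), d_{\mathfrak m}(g,g')\}$, and then extend to the completion. Your Step~3 spells out the Cauchy-sequence extension, associativity, and unit by hand, whereas the paper simply invokes the general fact that uniformly continuous maps extend to metric completions; but this is a difference in level of detail, not in strategy.
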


\begin{proof}
	For $f, g \in \Pol (\mathbb R^d)$ we have $o(f \star_{\hbar} g) \geq o(f) + o(g)$ since $\star_{\hbar}$ only increases the degree, and therefore
	\begin{align*}
	d_{\mathfrak m}(f \star_{\hbar} f', g \star_{\hbar} g') 
	&= 2^{-o(f \star_{\hbar} (f'-g') - (g-f)\star_{\hbar} g')} \\
	&\leq \max\{2^{-o(f \star_{\hbar} (f'-g'))}, 2^{-o((g-f)\star_{\hbar} g')} \} \\
	&\leq \max\{2^{-o(f)} 2^{-o(f'-g')}, 2^{-o(g-f)} 2^{-o(g')} \} \\
	&\leq \max\{d_{\mathfrak m}(f',g'), d_{\mathfrak m}(f,g)\} \\
	&\leq d_{\mathfrak m}^\times((f,f'),(g,g'))
	\end{align*}
	where $d_{\mathfrak m}^\times((f,f'), (g,g')) = d_{\mathfrak m}(f,g) + d_{\mathfrak m}(f',g')$ is the product metric
	on $\mathbb C \llrr{x_1, \dots, x_d} \times \mathbb C \llrr{x_1, \dots, x_d}$. 
	This shows that $\star_\hbar$ is uniformly continuous, and such maps extend uniquely to uniformly continuous maps on the completion.
\end{proof}

Note that the point evaluations $\Pol(\mathbb R^d) \to \mathbb C$, $f \mapsto f(y)$ for $y \in \mathbb R^d \setminus \lbrace 0 \rbrace$ are not continuous with respect to the $\mathfrak m$\=/adic topology, and as a consequence the elements of the completion $\mathbb C\llrr {x_1, \dots, x_d}$ cannot be identified with functions. All topologies on $\Pol(\mathbb R^d)$ considered in the rest of this article will be locally convex, and allow one to identify the elements of the completions with functions. 

\subsubsection{A fine topology: The MacGyver topology}
\label{subsubsec:fine}

A general result for extending $\star_\hbar$ to a space of {\it functions} is given in Theorem~\ref{theorem:continuity:general} below. We first start with a technical lemma, which will be used to show that the dependence of $\star_\hbar$ on $\hbar$ is even better than what is required in our definition of strict deformation quantization (cf.\ Definition \ref{definition:strict}).

\begin{lemma} \label{lemma:uniformEstimates:holomorphicMapsClosed}
	Let $\star_\hbar \colon V \times V \to V$ be a bilinear map on a Fr\'echet space $V$, 
	depending on a parameter $\hbar$ ranging over $\Omega$.
	If $\star_\hbar$ is locally uniformly continuous in $\hbar$,
	in the sense that for every $\hbar_0 \in \Omega$ and every continuous seminorm $\norm \argument_\alpha$ on $V$ there exists an open neighbourhood $U \subset \Omega$ of $\hbar_0$ and a continuous seminorm $\norm \argument_\beta$ on $V$
	with
	\begin{equation*}
		\norm{f \star_\hbar g}_\alpha \leq \norm f_{\beta} \norm g_{\beta}
	\end{equation*} 
	for all $\hbar \in U$ and $f, g \in V$,
	then the set $\set{(f,g) \in V \times V \mid \Omega \ni \hbar \mapsto f \star_\hbar g \in V \textup{ is holomorphic}}$ is closed.
\end{lemma}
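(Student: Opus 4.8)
The plan is to prove the statement by showing that the set is \emph{sequentially} closed, which is enough since $V \times V$, a finite product of Fréchet spaces, is metrizable. So I would fix a sequence $(f_n, g_n)$ in this set with $(f_n, g_n) \to (f, g)$ in $V \times V$ and show that the map $\mu_{f,g} \colon \Omega \to V$, $\hbar \mapsto f \star_\hbar g$, is holomorphic; then $(f,g)$ lies in the set and we are done.

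By the definition of holomorphy for $V$-valued maps, and since holomorphy is local, it suffices to show that for every $\origphi \in V^*$ and every $\hbar_0 \in \Omega$ the scalar function $\origphi \circ \mu_{f,g}$ is holomorphic on some neighbourhood of $\hbar_0$. Fix such $\origphi$ and $\hbar_0$. Choose a continuous seminorm $\norm \argument_\alpha$ on $V$ and a constant $C > 0$ with $\abs{\origphi(v)} \leq C \norm{v}_\alpha$ for all $v \in V$, and then invoke the local uniform continuity hypothesis to obtain an open neighbourhood $U \subset \Omega$ of $\hbar_0$ and a continuous seminorm $\norm \argument_\beta$ on $V$ such that $\norm{f' \star_\hbar g'}_\alpha \leq \norm{f'}_\beta \norm{g'}_\beta$ for all $\hbar \in U$ and all $f', g' \in V$. (Here $U$ may depend on $\origphi$, but that is harmless since holomorphy is local and we treat each $\origphi$ separately.)

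Next I would exploit bilinearity of $\star_\hbar$ to estimate, for all $\hbar \in U$,
\[
  \norm{f_n \star_\hbar g_n - f \star_\hbar g}_\alpha
  = \norm{f_n \star_\hbar (g_n - g) + (f_n - f) \star_\hbar g}_\alpha
  \leq \norm{f_n}_\beta \norm{g_n - g}_\beta + \norm{f_n - f}_\beta \norm{g}_\beta .
\]
Since the sequence $(f_n)$ converges, the numbers $\norm{f_n}_\beta$ are bounded, while $\norm{g_n - g}_\beta \to 0$ and $\norm{f_n - f}_\beta \to 0$; hence the right-hand side tends to $0$ uniformly in $\hbar \in U$. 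Thus $\mu_{f_n, g_n} \to \mu_{f,g}$ uniformly on $U$ with respect to $\norm \argument_\alpha$, and therefore $\origphi \circ \mu_{f_n, g_n} \to \origphi \circ \mu_{f,g}$ uniformly on $U$.

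Finally, since each $(f_n, g_n)$ belongs to the set, every $\origphi \circ \mu_{f_n, g_n} \colon \Omega \to \mathbb C$ is holomorphic, and a locally uniform limit of scalar holomorphic functions is holomorphic by Weierstrass' convergence theorem; hence $\origphi \circ \mu_{f,g}$ is holomorphic on $U$. Letting $\hbar_0$ range over $\Omega$ shows $\origphi \circ \mu_{f,g}$ is holomorphic on all of $\Omega$, and letting $\origphi$ range over $V^*$ shows $\mu_{f,g}$ is holomorphic, completing the argument. The only slightly delicate point is the interchange of limit and holomorphy; this is precisely why one reduces to the scalar functions $\origphi \circ \mu_{f_n, g_n}$ and invokes the classical Weierstrass theorem — and the role of the \emph{locally uniform} bound in the hypothesis is exactly to upgrade pointwise-in-$\hbar$ continuity estimates to the locally uniform convergence $\mu_{f_n,g_n} \to \mu_{f,g}$ that this theorem requires.
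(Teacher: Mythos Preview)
Your proof is correct and follows essentially the same approach as the paper: reduce to scalar functions via continuous linear functionals, use the locally uniform bound to obtain uniform convergence of $\origphi \circ \mu_{f_n,g_n}$ to $\origphi \circ \mu_{f,g}$ on a neighbourhood of each $\hbar_0$, and conclude via Weierstrass' theorem. Your explicit remark that sequential closedness suffices because $V \times V$ is metrizable is a nice clarification the paper leaves implicit.
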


\begin{proof}
	Let $\origphi \in V^*$ be a continuous linear functional. 
	Then there is a continuous seminorm $\norm\argument_\alpha$ on $V$ such that $\abs{\origphi(f)} \leq \norm f_\alpha$ holds for all $f \in V$.
	For any $\hbar_0 \in \Omega$ we find an open neighbourhood $U \subset \Omega$ of $\hbar_0$ and a continuous seminorm $\norm\argument_\beta$ on $V$ such that 
	$\norm{ f \star_\hbar g}_\alpha \leq \norm{f}_{\beta} \norm{g}_{\beta}$ holds for all $f, g \in V$ and $\hbar \in U$. 
	Consequently, for any converging sequences $f_n \to f$ and $g_n \to g$ in $V$,
	\begin{equation*}
		\abs{\origphi(f \star_\hbar g) - \origphi(f_n \star_\hbar g_n)} 
		\leq \norm{f \star_\hbar g - f_n \star_\hbar g_n}_\alpha
		\leq \norm{f}_{\beta} \norm{g-g_n}_{\beta} + \norm{g_n}_{\beta} \norm{f-f_n}_{\beta}
	\end{equation*}
	holds for all $\hbar \in U$. So if $\hbar \mapsto f_n \star_\hbar g_n$ is holomorphic for all $n \in \mathbb N$, 
	then this shows that $\hbar \mapsto \origphi(f \star_\hbar g)$ is a uniform limit of holomorphic maps on $U$, 
	hence holomorphic on $U$, and in particular in $\hbar_0$. 
	Since $\hbar_0 \in \Omega$ was arbitrary, $\hbar \mapsto \origphi(f\star_\hbar g)$ is holomorphic on $\Omega$. 
	Thus $\hbar \mapsto f \star_\hbar g$ is holomorphic on $\Omega$, 
	and it follows that the set of $(f,g) \in V \times V$, for which $\hbar \mapsto f \star_\hbar g$ is holomorphic, is closed.
\end{proof}

\begin{remark} \label{remark:uniformEstimates:continuousMapsClosed}
	Using that (locally) uniform limits of continuous functions are again continuous,
	we see that, under the assumptions of Lemma~\ref{lemma:uniformEstimates:holomorphicMapsClosed}, 
	the set of $(f,g) \in V \times V$ for which $\Omega \to V$, $\hbar \mapsto f \star_\hbar g$ is continuous is also closed. 
	This even holds when the domain $\Omega$ is an arbitrary subset of $\mathbb C$.
\end{remark}

\begin{definition}
	\label{def:norms:general} Let the \emph{MacGyver topology} on $\Pol (\mathbb R^d)$ be the locally convex topology given by the family of norms 
	$\lbrace \weakNorm \argument C \mid C \in (0,\infty) \rbrace$ defined by
	\begin{equation} \label{eq:def:sq}
		\weakNorm\argument C \colon \Pol(\mathbb R^d) \to [0, \infty) \komma
		\quad
		\weakNorm[\Big]{\sum_{L \in \mathbb N_0^d} f_L x^L} C \coloneqq \sum_{L \in \mathbb N_0^d} \abs{f_L} C^{{\abs L}^2} \punkt
	\end{equation}
	We write $\weakPol(\mathbb R^d)$ for the resulting locally convex vector space and $\smash{\weak{\widehat{\Pol}{}}(\mathbb R^d)}$ for its completion.
\end{definition}
The MacGyver topology can be viewed a handyperson's first choice for producing strict quantizations with particularly nice dependence on $\hbar$
for a large class of Poisson structures. However, it is finer than the {\TRtop} for any $R \geq 0$,
whence the completion with respect to this topology is strictly smaller than the completion with respect to any of the $\TR$\=/topologies.
A function $f$ is in the completion $\weak{\widehat{\Pol}}(\mathbb R^d)$ if and only if it is analytic and its Taylor coefficients
decay fast enough that the sum on the right of \eqref{eq:def:sq} converges for any $C \in (0,\infty)$.
Any such function extends to a holomorphic function on $\mathbb C^d$.

Recall from Theorem \ref{theorem:graphical} the graphical formula for combinatorial star products given by
$f \star g = \sum_{n \geq 0} \sum_{\Gamma \in \mathfrak G_{n,2}} C_{\Gamma}(f,g)$ 
where $f, g \in \Pol(\mathbb R^d)$ and where $C_{\Gamma} \colon \Pol(\mathbb R^d) \times \Pol(\mathbb R^d) \to \Pol(\mathbb R^d)\llrr t$ 
is a formal series of bidifferential operators, starting with $t^n$ if $\Gamma \in \mathfrak G_{n,2}$. 
Rearranging the sums in powers of $t$, 
we can also write 
\begin{equation}
\label{rearrange}
	f \star g = \sum_{n \geq 0} t^n B_n(f,g)
\end{equation} 
where each $B_n \colon \Pol(\mathbb R^d) \times \Pol(\mathbb R^d) \to \Pol(\mathbb R^d)$ is now a bidifferential operator. It is easy to prove that multiplication and differentiation, and consequently all bidifferential operators, are continuous with respect to the MacGyver topology.
Hence $B_n$ extends to the completion and $B_n(f,g)$ makes sense for all $f,g \in \smash{\weak{\widehat{\Pol}}}(\mathbb R^d)$.

We show that the combinatorial star product is continuous on $\weakPol(\mathbb R^d)$ under a mild finiteness condition:

\begin{theorem} \label{theorem:continuity:general}
	Let $\star$ be a combinatorial star product quantizing a Poisson structure $\eta$ and assume the following:
	\begin{enumerate}[label={\textup{(\alph*)}}]
		\item For any $1 \leq i,j \leq d$ we have
		\begin{equation}
		\label{qijK}
			x_i \star x_j = \sum_{K \in \mathbb N_0^d} q_{i,j,K} x^K
		\end{equation}
		with coefficients $q_{i,j,K} \in \Hol(\Omega)$\footnote{These coefficients are identified with formal power series, as per Convention~\ref{convention}.} and only finitely many coefficients are non-zero.
		\item There is a constant $\alpha$ (independent of $K$ and $L$) such that at most $\alpha (\abs K + \abs L)^2$ many reductions\footnote{For a linear combination of words in $\mathbb C \langle x_1, \dotsc, x_d \rangle$, a {\it reduction} means to perform the right-most possible replacement of $x_j x_i$ by $x_i x_j + \widetilde \phi (x_j x_i)$ if $j > i$ in each word, leaving words of the form $x_1^{J_1} \dotsb x_d^{J_d}$ unchanged (cf.\ Definition \ref{definition:combinatorial}).} are needed to compute $x^K \star x^L$.
		\item There is a constant $\beta$ (independent of $K$ and $L$) such that $x^K \star x^L$ is a sum of monomials of order not greater than $\beta (\abs K + \abs L)$.
	\end{enumerate}
	Then we have the following.
	\begin{enumerate}
	\item 
	For any $\hbar \in \Omega$, the product 
	$\star_\hbar \colon \weakPol(\mathbb R^d) \times \weakPol(\mathbb R^d) 
	\to \weakPol(\mathbb R^d)$, obtained from $\star$ by evaluating $t \mapsto \hbar$, 
	is continuous and extends uniquely to a continuous product
	\[
	\star_\hbar \colon \weak{\widehat{\Pol}{}}(\mathbb R^d) \times \weak{\widehat{\Pol}{}}(\mathbb R^d) 
	\to \weak{\widehat{\Pol}{}}(\mathbb R^d) \punkt
	\]
	\item For any fixed $f, g \in \weak{\widehat{\Pol}{}}(\mathbb R^d)$, the maps $\mu_{f,g}, \Delta_{f,g} \colon \Omega \to \weak{\widehat{\Pol}}(\mathbb R^d)$, defined in \eqref{eq:productAndCommutatorMaps}, are holomorphic.
	\item \label{continuity:general3} If $\complexDisc \subset \Omega$ is a disc around $0$ contained in $\Omega$, then  
	\begin{equation} \label{eq:weak:seriesExpansion}
		\mu_{f,g}(\hbar) = f \star_\hbar g = \sum_{n \geq 0} \hbar^n B_n(f,g)
	\end{equation}
	holds for all $\hbar \in \complexDisc$, where the power series on the right-hand side takes values in the Fr\'echet space $\smash{\weak{\widehat{\Pol}}}(\mathbb R^d)$ and converges absolutely and uniformly on all compact subsets of $\complexDisc$.
	\end{enumerate}
\end{theorem}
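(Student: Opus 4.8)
The plan is to derive all of parts~(i)--(iii) from a single uniform estimate for $\star_\hbar$ on monomials in the norms $\weakNorm{\argument}{C}$: part~(i) will then follow from Lemma~\ref{lemma:continuous}, holomorphy of $\mu_{f,g}$ and $\Delta_{f,g}$ from Lemma~\ref{lemma:uniformEstimates:holomorphicMapsClosed}, and part~(iii) from Cauchy's integral formula. (That $\star_\hbar$ makes sense as an associative product on $\Pol(\mathbb R^d)$ is already covered by \S\ref{subsec:convergence}: for fixed polynomials the coefficients of $f \star g$ lie in a finite-dimensional subspace and depend holomorphically on $\hbar$ by assumption~(a), and associativity of $\star_\hbar$ propagates from a small disc by analytic continuation.)

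First I would prove the key estimate. Fix $\hbar \in \Omega$ and set
\[
  M(\hbar) \coloneqq \max\Bigl(1,\ \max_{1 \le j < i \le d}\ \sum_{K \in \mathbb N_0^d} \abs{q_{i,j,K}(\hbar)}\Bigr),
\]
which is finite by assumption~(a), satisfies $M(\hbar) \ge 1$ and $M(0) = 1$ (as $x_i \star_0 x_j = x_j x_i$ for $i > j$), and is locally bounded on $\Omega$ as a maximum of finite sums of moduli of holomorphic functions. Measuring an element of $\mathbb C\langle x_1,\dotsc,x_d\rangle$ by the $\ell^1$-norm of its word-coefficients, a single reduction step --- performing, in each word, the rightmost replacement of a bad pair $x_i x_j$ (with $i > j$) by $\sum_K q_{i,j,K}(\hbar)\, x^K$ --- multiplies this norm by at most $M(\hbar)$, since a word $u\, x_i x_j\, v$ with coefficient $c$ becomes $\sum_K q_{i,j,K}(\hbar)\, u\, x^K\, v$, of total $\ell^1$-mass $\le \abs c\, M(\hbar)$ (and cancellations only help). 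By assumption~(b) at most $\alpha n^2$ such steps compute $x^K \star x^L$, where $n \coloneqq \abs K + \abs L$, so the $\ell^1$-norm of $x^K \star_\hbar x^L$, and hence of its image in the commutative quotient, is at most $M(\hbar)^{\alpha n^2}$, while by assumption~(c) every monomial occurring there has degree $\le \beta n$. Therefore
\[
  \weakNorm{x^K \star_\hbar x^L}{C} \le M(\hbar)^{\alpha n^2}\, \max(1,C)^{\beta^2 n^2} = A(C,\hbar)^{n^2}, \qquad A(C,\hbar) \coloneqq M(\hbar)^{\alpha}\, \max(1,C)^{\beta^2}.
\]
Using $n^2 \le 2\abs K^2 + 2\abs L^2$ and putting $C' \coloneqq A(C,\hbar)^2$ gives $\weakNorm{x^K \star_\hbar x^L}{C} \le \weakNorm{x^K}{C'}\weakNorm{x^L}{C'}$; summing against the coefficients of $f, g \in \weakPol(\mathbb R^d)$ yields $\weakNorm{f \star_\hbar g}{C} \le \weakNorm{f}{C'}\weakNorm{g}{C'}$, and Lemma~\ref{lemma:continuous} then gives part~(i). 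Since $M$ is bounded on compact subsets of $\Omega$, the same $C'$ works for all $\hbar$ in a neighbourhood of any given point, so $\star_\hbar$ is locally uniformly continuous in $\hbar$ in the sense of Lemma~\ref{lemma:uniformEstimates:holomorphicMapsClosed}.

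Next I would deal with holomorphy and the expansion. For $f, g \in \Pol(\mathbb R^d)$ the product $f \star_\hbar g$ lies in a fixed finite-dimensional subspace with coefficients polynomial in the $q_{i,j,K}(\hbar)$, so $\hbar \mapsto f \star_\hbar g$ is holomorphic on $\Omega$, and as in \S\ref{subsubsec:convergencelogcanonical} its Taylor expansion at $0$ is the formal star product evaluated termwise, so its $n$th coefficient is $B_n(f,g)$ with $B_n$ as in \eqref{rearrange}. By the local uniform continuity above and Lemma~\ref{lemma:uniformEstimates:holomorphicMapsClosed}, the set of $(f,g)$ for which $\hbar \mapsto f \star_\hbar g$ is holomorphic is closed and contains the dense subset $\Pol(\mathbb R^d) \times \Pol(\mathbb R^d)$, hence $\mu_{f,g}$ is holomorphic on $\Omega$ for all $f, g$ in the completion. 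Now fix a disc $\complexDisc \subset \Omega$ of radius $r$ and $0 < \rho' < r$; on the circle $\abs\hbar = \rho'$ a single $C'$ bounds $\weakNorm{f \star_\hbar g}{C} \le \weakNorm{f}{C'}\weakNorm{g}{C'}$, and Cauchy's formula in the Fr\'echet space $\weak{\widehat{\Pol}}(\mathbb R^d)$ writes the $n$th Taylor coefficient of $\mu_{f,g}$ as $\frac{1}{2\pi\I}\oint_{\abs\hbar = \rho'} \hbar^{-n-1}\mu_{f,g}(\hbar)\, \mathrm d\hbar$; approximating $f, g$ by polynomials (uniformly on this circle) and using continuity of $B_n$ for the MacGyver topology identifies this coefficient with $B_n(f,g)$, and the usual length estimate gives $\weakNorm{B_n(f,g)}{C} \le (\rho')^{-n}\weakNorm{f}{C'}\weakNorm{g}{C'}$. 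Hence $\sum_{n \ge 0}(\rho'')^n \weakNorm{B_n(f,g)}{C} < \infty$ for every $C$ and every $\rho'' < r$, so $\sum_n \hbar^n B_n(f,g)$ converges absolutely in $\weak{\widehat{\Pol}}(\mathbb R^d)$ and uniformly on compact subsets of $\complexDisc$, and being the Taylor series at $0$ of the holomorphic function $\mu_{f,g}$, its sum equals $f \star_\hbar g$ on all of $\complexDisc$ --- this is part~(iii). Applying this to both orderings, $\Delta_{f,g}(\hbar) = \frac{1}{\I}\sum_{n \ge 0}\hbar^n\bigl(B_{n+1}(f,g) - B_{n+1}(g,f)\bigr)$ on $\complexDisc \setminus \{0\}$, a convergent power series whose value at $0$ is $\frac{1}{\I}(B_1(f,g) - B_1(g,f)) = \{f,g\}_\eta$; here the identity $B_1(f,g) - B_1(g,f) = \I\{f,g\}_\eta$, valid for polynomials by Definition~\ref{definition:formalStarProduct}\,(ii), passes to the completion because both sides are bidifferential operators continuous for the MacGyver topology. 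So $\Delta_{f,g}$ is holomorphic on each such disc, hence on $\Omega$, finishing part~(ii).

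The main obstacle will be the key estimate itself: converting assumptions~(b) and~(c) into $\weakNorm{x^K \star_\hbar x^L}{C} \le A^{(\abs K + \abs L)^2}$ requires carefully tracking how the $\ell^1$-mass of the intermediate expressions grows through the reduction process, and keeping the $\hbar$-dependence explicit via $M(\hbar)$ so that this one estimate simultaneously powers the holomorphy statements and the Cauchy bounds. The elementary inequality $(\abs K + \abs L)^2 \le 2\abs K^2 + 2\abs L^2$ is what lets the quadratic exponent of the MacGyver norm split into a product of a norm of $f$ and a norm of $g$ --- which is precisely why the MacGyver topology, rather than a $\TR$-topology, is the natural choice for a bound growing quadratically in $\abs K + \abs L$.
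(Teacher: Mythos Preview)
Your proof is correct and essentially identical to the paper's: both bound the $\ell^1$-mass of $x^K \star_\hbar x^L$ by tracking it through at most $\alpha(\abs K+\abs L)^2$ reductions (your $M(\hbar)$ plays the role of the paper's $Q = N\sup_{i,j,K,\hbar\in U}\abs{q_{i,j,K}(\hbar)}$), combine with the degree bound and the elementary inequality $(\abs K+\abs L)^2 \le 2\abs K^2+2\abs L^2$ to obtain the bilinear MacGyver estimate, and then feed this uniform estimate into Lemma~\ref{lemma:continuous}, Lemma~\ref{lemma:uniformEstimates:holomorphicMapsClosed}, and Cauchy's integral formula for parts (i)--(iii). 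The only cosmetic wrinkle is your last step, where ``holomorphic on each such disc, hence on $\Omega$'' doesn't literally follow since discs around $0$ need not cover $\Omega$; but holomorphy of $\Delta_{f,g}$ on $\Omega\setminus\{0\}$ is immediate from $\Delta_{f,g}(\hbar)=\tfrac{1}{\I\hbar}(\mu_{f,g}(\hbar)-\mu_{g,f}(\hbar))$ and the already-established holomorphy of $\mu_{f,g}$, and your Taylor-series argument then handles the point $0$ --- the paper treats this point with the same brevity.
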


\begin{proof}
	Let $U \coloneqq \complexDisc(\hbar_0) \subset \Omega$ be an open disc around some point $\hbar_0 \in \Omega$,
	such that the closed disc $\cc{\complexDisc(\hbar_0)}$ is still contained in $\Omega$.
	Let $Q = N \sup_{i,j,K,\hbar \in U} \lbrace \abs{q_{i,j,K}(\hbar)} \rbrace$, 
	where $N$ is the number of non-zero coefficients $q_{i,j,K}$ in \eqref{qijK}. 
	We can assume that $\alpha, \beta, Q \geq 1$, otherwise just replace the respective constants by $1$ and the same estimates remain true.
	The star product $x^K \star x^L$ can be computed by performing reductions, 
	each of which might increase the number of terms by a factor of $N$ and the coefficients by a factor of $Q/N$.
	Therefore for any $C \geq 1$ and any $\hbar \in U$,
	$$\weakNorm{x^K \star_\hbar x^L}{C} 
	\leq 
	Q^{\alpha (\abs K + \abs L)^2} C^{(\beta (\abs K + \abs L))^2} 
	\leq 
	\big(Q^\alpha C^{\beta^2}\big)^{(\abs K + \abs L)^2}
	\leq 
	\big(Q^\alpha C^{\beta^2}\big)^{2 \abs K^2 + 2 \abs L^2}
	\punkt$$
	For $f = \sum_{K \in \mathbb N_0^d} f_K x^K$ and $g = \sum_{L \in \mathbb N_0^d} g_L x^L$ 
	with only finitely many non-zero coefficients $f_K$ respectively $g_L$,
	we obtain that
	\begin{equation*}
		\weakNorm{f \star_{\hbar} g} C 
		\leq
		\sum_{K,L \in \mathbb N_0^d} \abs{f_K} \abs{g_L} \big(Q^\alpha C^{\beta^2}\big)^{2 \abs K^2 + 2 \abs L^2}
		= 
		\weakNorm{f}{(Q^\alpha C^{\beta^2})^{2}}
		\weakNorm{g}{(Q^\alpha C^{\beta^2})^{2}}
	\end{equation*}
	holds for all $C \geq 1$ and $\hbar \in U$, showing that $\star_\hbar$ is continuous and extends uniquely to the completion by Lemma~\ref{lemma:continuous}.
	
	Moreover, for fixed $f, g \in \Pol(\mathbb R^d)$ all values of the map $\mu_{f,g} \colon \Omega \to \Pol(\mathbb R^d)$ are contained in a finite-dimensional subspace of $\Pol(\mathbb R^d)$ and all its ``components'' are holomorphic on $\Omega$, so $\mu_{f,g}$ is holomorphic for fixed $f,g \in \Pol(\mathbb R^d)$ with absolutely and uniformly convergent power series expansion as in \eqref{eq:weak:seriesExpansion}. Since the estimates above are uniform in $\hbar \in U$, it follows from Lemma~\ref{lemma:uniformEstimates:holomorphicMapsClosed} that $\mu_{f,g}$ is holomorphic for all $f,g \in \smash{\weak{\widehat{\Pol}}}(\mathbb R^d)$.
	As a consequence of Cauchy's integral formula (see e.g.\ \cite[Thm.~3.31]{rudin})
	the power series expansion of $\mu_{f,g}$ around $0$, where $f, g \in \weak{\widehat{\Pol}}(\mathbb R^d)$, is absolutely and uniformly convergent on all compact subsets of any disc $\complexDisc \subset \Omega$ around $0$, and 
	its $n$-th coefficient is
	\begin{equation*}
	\frac 1 {n!} \frac{\partial^n}{\partial \hbar^n} f \star_\hbar g \Big|_{\hbar = 0}
	=
	\frac{1}{2\pi\I} \int_{\partial \complexDisc[r']} \frac{f \star_z g} {z^{n+1}} \mathrm{d} z
	\end{equation*}
	where $0 < r' < r$. Hence the coefficients depend continuously on $(f,g) \in \weak{\widehat{\Pol}}(\mathbb R^d) \times \weak{\widehat{\Pol}}(\mathbb R^d)$ and must therefore coincide with $B_n(f,g)$. 
	Finally, the holomorphy of $\Delta_{f,g}$ follows from that of $\mu_{f,g}$ and the fact that $B_1(f,g) - B_1(g,f) = \I \{f,g\}_\eta$.
\end{proof}

\begin{remark}
	The assumptions of Theorem~\ref{theorem:continuity:general} are fulfilled in all the examples of non-symmetrized combinatorial star products given in \S\ref{sec:starproducts},
	more precisely for the star products from Examples \ref{example:poissonStructure:logcanonicald}, \ref{example:poissonStructure:nonQuadratic}, and \ref{example:poissonStructure:nonQuadratic:exact}.
	Replacing all $x$'s with $w$'s in the definition of the MacGyver topology and Theorem~\ref{theorem:continuity:general},
	the result also holds for the combinatorial star products of Wick type from Example~\ref{example:complexlogcanonical}.
	Note that combinatorial star products with different finiteness conditions, i.e.\ a different bound such as $\alpha(\abs K + \abs L)^3$ for the number of reductions or a different bound for the order of $x^K \star x^L$,
	are typically continuous with respect to an adjusted MacGyver topology, where the exponent of $C$ in \eqref{eq:def:sq} is changed.
\end{remark}

\begin{remark}
\label{remark:smallCompletion}
	The completion $\weak{\widehat{\Pol}}(\mathbb R^d)$ is not as large as one might hope, as it does not contain any non-constant bounded functions $\mathbb R^d \to \mathbb C$.
	Indeed, if $f \in \weak{\widehat{\Pol}}(\mathbb R^d)$ is bounded, its extension to a holomorphic function on $\mathbb C^d$ (which we continue to denote by $f$)
	satisfies $\abs{f(z_1,z_2,\dots,z_d)} \leq C \E^{\abs{z_1}^{1/2}}$ for fixed $z_2, \dots, z_d \in \mathbb C$ and some constant $C > 0$ depending on $z_2, \dots, z_d$, due to the imposed fast decay of the Taylor coefficients.
	By the Phragmén--Lindelöf Theorem (see e.g.~\cite[Ch.~VI, Cor.~4.2]{conway}) this implies that $z_1 \mapsto f(z_1, z_2, \dots, z_d)$ is bounded, hence constant, for all $z_2, \dots, z_d \in \mathbb C$. 
	So $f$ is independent of $z_1$, and we can repeat the argument with the other variables.
\end{remark}

\subsection{Stronger continuity results in examples} \label{subsec:continuity:combinatorial:examples}

In \S\ref{subsec:continuity:combinatorial:general} we saw two topologies which could be used to extend combinatorial star products from the space of polynomial functions to larger spaces. Due to the generality of these results, the completion is in some sense either too large, as the elements can no longer be viewed as smooth functions on $\mathbb R^d$, or too small, as it contains too few interesting functions (cf.\ Remark \ref{remark:smallCompletion}).

In concrete examples, in particular in all of the examples considered thus far, we shall obtain far better continuity estimates for certain choices of the parameters entering the combinatorial star product. These estimates are described in detail in \S\ref{subsubsec:continuity:combinatorial:logcanonical}, \S\ref{subsubsec:continuity:combinatorial:nonquadratic}, and \S\ref{subsec:continuity:symmetrized}, 
after introducing the relevant locally convex topologies in \S\ref{subsubsec:continuity:topologies}. These topologies ensure that the space of polynomial functions is completed to large spaces of functions, so they combine the advantages of the coarse and fine topologies of \S\ref{subsubsec:coarse} and \S\ref{subsubsec:fine}.

\subsubsection{Analytic functions} \label{subsubsec:continuity:topologies}

We begin by introducing certain spaces of analytic functions with Taylor series expansion converging on certain products of intervals $\realDisc$.
For two $d$-tuples $r, r' \in (0, \infty]^d$, we define $r < r'$ and $r \leq r'$ componentwise, 
e.g.\ $r < r'$ holds whenever $r_i < r'_i$ for all $1 \leq i \leq d$.

Let $r = (r_1, \dotsc, r_d) \in (0,\infty]^d$. It is well-known that any holomorphic function $f$ on the polydisc
\begin{equation*}
	\complexDisc \coloneqq \lbrace z \in \mathbb C^d \mid \abs {z_i} < r_i \text{ for all } 1 \leq i \leq d \rbrace
\end{equation*}
has a Taylor series $\sum_{L \in \mathbb N_0^d} f_L z^L$ which converges absolutely and uniformly to $f$ on all compact subsets of $\complexDisc$. Hence the right-hand side of
\begin{equation*}
	\norm{f}_\rho = \sum_{L \in \mathbb N_0^d} \abs{f_L} \rho^{L}
\end{equation*}
converges for any $d$-tuple $\rho \in (0, \infty)^d$ with $\rho < r$ and $\norm f_\rho$ is well-defined for all holomorphic functions $f$ on $\complexDisc$.
We always consider the space $\Hol(\complexDisc)$ of holomorphic functions on $\complexDisc$ 
with the family of norms $\lbrace\norm\argument_\rho \mid \rho \in (0, \infty)^d, \rho < r\rbrace$.

\begin{lemma} \label{lemma:topologiesAgree}
	Let $r \in (0, \infty]^d$. 
	Then the above topology of $\Hol(\complexDisc)$ is precisely the topology of uniform convergence on compact subsets of $\complexDisc$.
\end{lemma}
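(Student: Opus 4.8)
The plan is to compare the two locally convex topologies on $\Hol(\complexDisc)$ seminorm by seminorm. Writing $p_K(f) \coloneqq \sup_{z \in K} \abs{f(z)}$ for a compact $K \subset \complexDisc$, the topology of uniform convergence on compact subsets is by definition the one defined by the family $\set{p_K \mid K \subset \complexDisc \textup{ compact}}$. Since two topologies on a vector space that are each defined by a family of seminorms coincide precisely when every seminorm of either family is dominated, up to a positive constant, by a finite maximum of seminorms of the other (equivalently: the identity map is continuous in both directions), it suffices to produce such comparison estimates, and in both directions a \emph{single} seminorm on the right-hand side will turn out to be enough.

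First I would check that each norm $\norm{\argument}_\rho$ is continuous for the topology of uniform convergence on compacta. Given $\rho \in (0, \infty)^d$ with $\rho < r$, choose $\sigma \in (0,\infty)^d$ with $\rho < \sigma < r$ --- e.g.\ $\sigma_i = \rho_i + 1$ if $r_i = \infty$ and $\sigma_i = \tfrac 1 2 (\rho_i + r_i)$ otherwise --- so that the closed polydisc $\overline{\mathbb D_\sigma} = \set{z \in \mathbb C^d \mid \abs{z_i} \leq \sigma_i \textup{ for all } i}$ is a compact subset of $\complexDisc$. The Cauchy estimates on $\overline{\mathbb D_\sigma}$ give $\abs{f_L} \leq \sigma^{-L}\, p_{\overline{\mathbb D_\sigma}}(f)$ for every multi-index $L$, and summing the resulting geometric series yields
\[
	\norm{f}_\rho = \sum_{L \in \mathbb N_0^d} \abs{f_L}\, \rho^L \leq \Bigl( \prod_{i=1}^d \frac{1}{1 - \rho_i / \sigma_i} \Bigr) p_{\overline{\mathbb D_\sigma}}(f) \punkt
\]
Conversely, to see that every $p_K$ is continuous for the topology defined by the norms $\norm{\argument}_\rho$, I would note that for compact $K \subset \complexDisc$ each continuous function $z \mapsto \abs{z_i}$ attains a maximum $m_i < r_i$ on $K$, so one may pick $\rho \in (0, \infty)^d$ with $m_i < \rho_i < r_i$ for every $i$ (again $\rho_i = m_i + 1$ when $r_i = \infty$), whence $K \subset \overline{\mathbb D_\rho} \subset \complexDisc$. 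Termwise estimation of the absolutely convergent Taylor series on $\overline{\mathbb D_\rho}$ then gives $\abs{f(z)} \leq \sum_{L \in \mathbb N_0^d} \abs{f_L}\, \abs{z^L} \leq \sum_{L \in \mathbb N_0^d} \abs{f_L}\, \rho^L = \norm{f}_\rho$ for all $z \in \overline{\mathbb D_\rho}$, and hence $p_K(f) \leq \norm{f}_\rho$.

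Combining the two families of estimates shows that the identity is a homeomorphism between the two topologies, which proves the lemma. The argument is just the standard Cauchy-estimate comparison, so I do not anticipate a genuine obstacle; the only point requiring a little care --- and the nearest thing to a subtlety --- is the bookkeeping for components $i$ with $r_i = \infty$, which is disposed of by always choosing the auxiliary radii $\sigma_i$, respectively $\rho_i$, finite, as done above.
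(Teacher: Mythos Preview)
Your proof is correct and follows essentially the same approach as the paper: both directions are handled by the standard comparison of seminorms, using termwise estimation of the Taylor series for one inequality and the Cauchy estimates (equivalently, Cauchy's integral formula) together with a geometric series for the other. The only cosmetic differences are that the paper presents the two directions in the opposite order and uses the distinguished boundary $\partial\mathbb D_{\rho'}$ rather than the full closed polydisc as the compact set in the Cauchy estimate, and you are slightly more explicit about handling the components with $r_i = \infty$.
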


\begin{proof}
	This result is well-known and a proof is only repeated for the convenience of the reader. We need to show that the families of seminorms 
	$\lbrace \norm \argument_\rho \mid \rho \in (0,\infty)^d, \rho < r \rbrace$ 
	and 
	$\lbrace \sup_{z \in K}\abs{\argument(z)} \mid K \subset \complexDisc , K \text{ compact}\rbrace$ on $\Hol(\complexDisc)$ are equivalent.
	Let $K \subset \complexDisc$ be any compact subset. 
	Then there exists $\rho \in (0, \infty)^d$, $\rho < r$ such that $K \subset \mathbb D_\rho$ and we obtain that
	\begin{equation*}
		\sup_{z \in K} \abs{f(z)} \leq \sup_{z \in K}\sum_{L \in \mathbb N_0^d} \abs{f_L} \abs{z_1}^{L_1} \dots \abs{z_d}^{L_d}
		\leq  \sum_{L \in \mathbb N_0^d} \abs{f_L} \rho^L = \norm f_\rho
	\end{equation*}
	holds for all $f = \sum_{L \in \mathbb N_0^d} f_L z^L \in \Hol(\complexDisc)$.
	Conversely, let $\rho \in (0, \infty)^d$, $\rho < r$ and choose $\rho' \in (0, \infty)^d$, $\rho < \rho' < r$.
	Then we know from Cauchy's integral formula that
	\begin{equation*}
		\abs{f_L} = \frac 1 {L!} \abs{\partial_L f(0)}
		= \frac 1 {(2\pi)^d} \abs[\bigg]{\int_{\partial \mathbb D_{\rho'}} \frac{f(z)}{z^{L+(1,\dots, 1)}} \mathrm{d} z}
		\leq \max_{z \in \partial \mathbb D_{\rho'}} \frac{\abs{f(z)}}{(\rho')^L} 
	\end{equation*}
	where 
	$\partial \mathbb D_{\rho'} 
	= 
	\lbrace z \in \mathbb C^d \mid \abs{z_i} 
	= 
	\rho'_i \text{ for all } 1 \leq i \leq d \rbrace$.
	Consequently,
	\begin{equation*}
		\norm f_\rho 
		= 
		\sum_{L \in \mathbb N_0^d} \abs{f_L} \rho^L 
		\leq 
		\max_{z \in \partial \mathbb D_{\rho'}} \abs{f(z)} \sum_{L \in \mathbb N_0^d} \Big(\frac{\rho}{\rho'}\Big)^L 
		= 
		\max_{z \in \partial \mathbb D_{\rho'}} \abs{f(z)} \frac 1 {1-\frac{\rho_1}{\rho'_1}} \dotsb \frac 1 {1-\frac{\rho_d}{\rho'_d}} \punkt 
	\end{equation*}
	Since $\partial \mathbb D_{\rho'}$ is compact, this shows that the families of seminorms are indeed equivalent.
\end{proof}

It follows from standard results in complex analysis that $\Hol(\complexDisc)$ is a Fréchet space and that the holomorphic polynomials, restricted to $\complexDisc$, form a dense subspace.
Moreover, the inclusion $\iota \colon \realDisc \to \complexDisc$ of 
\begin{equation}
	\realDisc \coloneqq \complexDisc \cap \mathbb R^d 
	= 
	\lbrace 
	x \in \mathbb R^d \mid \abs{x_i} < r_i \text{ for all $1 \leq i \leq d$}
	\rbrace
\end{equation}
into $\complexDisc$ induces an injective map $\iota^* \colon \Hol(\complexDisc) \to \Smooth(\realDisc)$, $f \mapsto f \circ \iota$,
since any holomorphic function is already uniquely determined by its restriction to the reals.
Denote the image of $\iota^*$ by $\Ana(\realDisc)$, and endow it with the locally convex topology that makes $\iota^* \colon \Hol(\complexDisc) \to \Ana(\realDisc)$ a homeomorphism.

\begin{corollary} \label{corollary:topologyOnAnaRd}
	Let $r \in (0,\infty]^d$. Then $\Ana(\realDisc)$ is a Fr\'echet space. 
	Moreover, the polynomials on $\mathbb R^d$ restricted to $\realDisc$ form a dense subspace, 
	and the topology of $\Ana(\realDisc)$ is induced by the family of norms 
	$\lbrace \norm \argument_\rho \mid \rho \in (0, \infty)^d, \rho < r \rbrace$ defined by
	\begin{equation}
		\norm\argument_\rho \colon \Ana(\realDisc) \to [0, \infty) \komma
		\quad
		\norm f_\rho \coloneqq \sum_{L \in \mathbb N_0^d} \abs{f_L} \rho^{L} 
	\end{equation}
	where $\sum_{L \in \mathbb N_0^d} f_L x^L$ is the power series expansion of $f$ around $0$.
\end{corollary}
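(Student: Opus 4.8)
The plan is to deduce the corollary by transport of structure along the linear bijection $\iota^* \colon \Hol(\complexDisc) \to \Ana(\realDisc)$, invoking the facts recalled immediately before the statement — that $\Hol(\complexDisc)$ is a Fréchet space in which the holomorphic polynomials, restricted to $\complexDisc$, are dense — together with Lemma~\ref{lemma:topologiesAgree}. Since the topology on $\Ana(\realDisc)$ was defined precisely so as to make $\iota^*$ a homeomorphism, and the property of being a Fréchet space is preserved by linear homeomorphisms, $\Ana(\realDisc)$ is a Fréchet space; this is the first assertion. Note also that $\iota^*$ is well-defined and injective because a holomorphic function on $\complexDisc$ is determined by its restriction to $\realDisc$, so that $\Ana(\realDisc) \subset \Smooth(\realDisc)$ genuinely consists of (real-analytic) functions.

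For density I would use that a homeomorphism carries dense subsets to dense subsets, so it is enough to identify the image under $\iota^*$ of the holomorphic polynomials on $\mathbb C^d$ restricted to $\complexDisc$. If $P(z) = \sum_{\abs L \leq n} f_L z^L$, then $\iota^* P = P \circ \iota$ is simply the polynomial function $x \mapsto \sum_{\abs L \leq n} f_L x^L$ on $\realDisc$, and conversely every element of $\Pol(\mathbb R^d)\at{\realDisc}$ arises in this way; thus $\iota^*$ restricts to the tautological bijection $\mathbb C[z_1,\dotsc,z_d] \to \mathbb C[x_1,\dotsc,x_d]$ and maps the holomorphic polynomials onto $\Pol(\mathbb R^d)\at{\realDisc}$, which is therefore dense in $\Ana(\realDisc)$.

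Finally, for the description of the topology, recall that by definition (and confirmed by Lemma~\ref{lemma:topologiesAgree}) the topology of $\Hol(\complexDisc)$ is induced by the norms $\norm f_\rho = \sum_{L \in \mathbb N_0^d} \abs{f_L} \rho^L$ for $\rho \in (0,\infty)^d$, $\rho < r$, where $\sum_L f_L z^L$ is the Taylor expansion of $f$ at $0$. The key point is that $\iota^*$ preserves these coefficients: for $f \in \Hol(\complexDisc)$ with Taylor series $\sum_L f_L z^L$ convergent on $\complexDisc$, the restriction $\iota^* f = f\at{\realDisc}$ satisfies $f(x) = \sum_L f_L x^L$ for all $x \in \realDisc$, which by uniqueness is precisely the power series expansion of $\iota^* f$ around $0$; hence $\norm{\iota^* f}_\rho = \norm f_\rho$ for every admissible $\rho$. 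So $\iota^*$ is a simultaneous isometry for the two families of norms, and the transported topology on $\Ana(\realDisc)$ is exactly the one described in the statement. The only step demanding any attention is this coefficient bookkeeping; the substantive analytic content — equivalence of the $\norm\argument_\rho$ with uniform convergence on compacta, completeness of $\Hol(\complexDisc)$, and density of polynomials therein — has already been established in Lemma~\ref{lemma:topologiesAgree} and the standard facts cited beforehand, so no real obstacle remains.
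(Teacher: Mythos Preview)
Your proposal is correct and is essentially the same argument the paper has in mind: the corollary is stated without proof because it is an immediate consequence of the preceding discussion—$\Ana(\realDisc)$ is defined so that $\iota^*$ is a homeomorphism, and the Fr\'echet property, density of polynomials, and the description by the norms $\norm\argument_\rho$ are all transported from $\Hol(\complexDisc)$ via Lemma~\ref{lemma:topologiesAgree} and the standard facts recalled just before the statement. You have simply spelled out this transport of structure carefully, including the coefficient bookkeeping showing $\norm{\iota^* f}_\rho = \norm f_\rho$.
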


\begin{definition} \label{def:norm:realanalytic}
	Let $r \in (0,\infty]^d$. 
	We write $\Pol_{r}(\mathbb R^d)$ for the locally convex vector space
	obtained by endowing $\Pol(\mathbb R^d)$ with the family of norms 
	$\lbrace \norm \argument_\rho \mid \rho \in (0, \infty)^d, \rho < r \rbrace$ and set
	$\Pol_\infty(\mathbb R^d) \coloneqq \Pol_{(\infty, \dots, \infty)}(\mathbb R^d)$. As usual, $\widehat{\Pol}_r (\mathbb R^d)$ denotes the completion of $\Pol_r (\mathbb R^d)$.
\end{definition}

\begin{remark}[Comparison of locally convex topologies]
\label{remark:comparison}
The topologies of $\mathcal P_{\TR[0]} (\mathbb R^d)$ and $\mathcal P_\infty (\mathbb R^d)$ coincide, but generally the topology of $\mathcal P_{\TR}(\mathbb R^d)$ is finer than that of $\mathcal P_r (\mathbb R^d)$ and has a smaller completion. More precisely, for $(0, \dots, 0) < r \leq r' \leq (\infty, \dots, \infty)$ and $0 \leq R \leq R' < \infty$, the various completions of $\Pol(\mathbb R^d)$ are related as follows
\begingroup
\small
\[
\weak{\widehat{\Pol}} (\mathbb R^d) \subset \widehat{\Pol}_{\TR[R']} (\mathbb R^d) \subset \widehat{\Pol}_{\TR} (\mathbb R^d) \subset \widehat{\Pol}_{\TR[0]} (\mathbb R^d) = \widehat{\Pol}_{\infty} (\mathbb R^d) \subset \widehat{\Pol}_{r'} (\mathbb R^d) \subset \widehat{\Pol}_r (\mathbb R^d) \subset \mathbb C \llrr{x_1, \dotsc, x_d}.
\]
\endgroup
\end{remark}

\begin{corollary} \label{corollary:requiredContinuityEstimates}
	Let $r \in (0, \infty]^d$.
	A star product $\star \colon \Pol_r(\mathbb R^d) \times \Pol_r(\mathbb R^d) \to \Pol_r(\mathbb R^d)$ is continuous 
	if and only if
	for any $\rho \in (0, \infty)^d$, $\rho < r$ there exists a $\rho' \in (0, \infty)^d$, $\rho' < r$ and a constant 
	$C_{\rho} \in \mathbb R$ such that
	\begin{equation*}
		\norm{f \star g}_{\rho} \leq C_{\rho} \norm f_{\rho'} \norm g_{\rho'} 
	\end{equation*}
	holds for all $f, g \in \Pol(\mathbb R^d)$.
	If this is the case, then $\star$ extends to a unique continuous bilinear map
	$\Ana(\realDisc) \times \Ana(\realDisc) \to \Ana(\realDisc)$, which we typically denote by the same symbol. 
\end{corollary}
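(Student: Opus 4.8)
\section*{Proof proposal for Corollary~\ref{corollary:requiredContinuityEstimates}}

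The plan is to deduce the corollary directly from the general characterization of continuity of bilinear maps in Lemma~\ref{lemma:continuous}, applied with $U = V = W = \Pol_r(\mathbb R^d)$ and $\mu = \star$; the only real work is to translate ``for every continuous seminorm on $W$'' into ``for every norm in the defining family $\set{\norm\argument_\rho \mid \rho \in (0,\infty)^d,\ \rho < r}$'', and symmetrically to replace the pair of seminorms on $U$ and $V$ by a single $\norm\argument_{\rho'}$. The key elementary observation is that this family of norms is \emph{directed}: given $\rho^{(1)}, \rho^{(2)} < r$, their componentwise maximum $\rho^{(3)} \coloneqq \max(\rho^{(1)},\rho^{(2)})$ still satisfies $\rho^{(3)} < r$ (entries of $r$ equal to $\infty$ cause no trouble here), and since $\rho^L = \prod_i \rho_i^{L_i}$ is monotone in $\rho$ we get $\norm\argument_{\rho^{(1)}}, \norm\argument_{\rho^{(2)}} \leq \norm\argument_{\rho^{(3)}}$. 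Consequently every continuous seminorm on $\Pol_r(\mathbb R^d)$ is bounded by a constant multiple of a single $\norm\argument_\rho$ with $\rho < r$, while conversely each $\norm\argument_\rho$ is itself a continuous norm.

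For the ``only if'' direction I would argue as follows. Suppose $\star$ is continuous and fix $\rho \in (0,\infty)^d$ with $\rho < r$. Applying Lemma~\ref{lemma:continuous} to the continuous norm $\norm\argument_\rho$ on $W$ produces continuous seminorms $\norm\argument_\beta$ on $U$ and $\norm\argument_\gamma$ on $V$ with $\norm{f\star g}_\rho \leq \norm f_\beta \norm g_\gamma$; by the previous paragraph $\norm\argument_\beta \leq C_\beta \norm\argument_{\rho_\beta}$ and $\norm\argument_\gamma \leq C_\gamma \norm\argument_{\rho_\gamma}$ for suitable $\rho_\beta, \rho_\gamma < r$ and constants $C_\beta, C_\gamma > 0$. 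Taking $\rho' \coloneqq \max(\rho_\beta,\rho_\gamma) < r$ and using monotonicity of the defining norms gives $\norm{f\star g}_\rho \leq C_\rho \norm f_{\rho'}\norm g_{\rho'}$ for all $f,g\in\Pol(\mathbb R^d)$, with $C_\rho \coloneqq C_\beta C_\gamma$. The ``if'' direction is immediate from the same characterization in Lemma~\ref{lemma:continuous}: given any continuous seminorm $\norm\argument_\alpha$ on $W$, dominate it by $C\norm\argument_\rho$ with $\rho < r$ and $C>0$, then the hypothesis yields $\rho' < r$ and $C_\rho$ with $\norm{\star(f,g)}_\alpha \leq C\,\norm{f\star g}_\rho \leq (C C_\rho)\,\norm f_{\rho'}\norm g_{\rho'}$, which is of the required form with the continuous seminorms $(C C_\rho)\norm\argument_{\rho'}$ on $U$ and $\norm\argument_{\rho'}$ on $V$.

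For the extension statement I would invoke Corollary~\ref{corollary:topologyOnAnaRd}: $\Ana(\realDisc)$ is a Fréchet space whose topology is induced precisely by the family $\set{\norm\argument_\rho \mid \rho < r}$ and in which the polynomials restricted to $\realDisc$ form a dense subspace; hence $\Ana(\realDisc)$ is a model for the completion $\widehat{\Pol}_r(\mathbb R^d)$. The second half of Lemma~\ref{lemma:continuous} then gives a unique continuous extension $\widehat{\Pol}_r(\mathbb R^d)\times\widehat{\Pol}_r(\mathbb R^d) \to \widehat{\Pol}_r(\mathbb R^d)$, i.e.\ a continuous bilinear map $\Ana(\realDisc)\times\Ana(\realDisc)\to\Ana(\realDisc)$, uniqueness following from density of $\Pol(\mathbb R^d)$ together with joint continuity. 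There is no substantial obstacle: the whole argument is a citation of Lemma~\ref{lemma:continuous} and Corollary~\ref{corollary:topologyOnAnaRd}, and the only point requiring a little care is the bookkeeping around directedness and monotonicity of the seminorm family and the fact that componentwise maxima of tuples $< r$ remain $< r$.
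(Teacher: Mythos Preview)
Your proof is correct and follows exactly the approach of the paper, which simply writes ``Follows directly from Lemma~\ref{lemma:continuous} and Corollary~\ref{corollary:topologyOnAnaRd}.'' You have merely spelled out the routine bookkeeping (directedness and monotonicity of the family $\{\norm\argument_\rho\}$) that the paper leaves implicit.
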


\begin{proof}
	Follows directly from Lemma \ref{lemma:continuous} and Corollary~\ref{corollary:topologyOnAnaRd}.
\end{proof}

For some combinatorial star products we can show a stronger statement, 
namely that it suffices to take $\rho' = \rho$ and $C_\rho=1$ in the previous corollary.
In this case, the resulting algebra $(\Ana(\realDisc), \star)$ is locally multiplicatively convex,
meaning that its topology is induced by the seminorms $\norm\argument_\rho$ satisfying $\norm{f \star g}_\rho \leq \norm f_\rho \norm g_\rho$.

\begin{remark}
\label{remark:complextopology}
	For continuity estimates of combinatorial star products of Wick type, we replace the real generators $x_i$ by $w_i$.
	To this end, we let 
	$\wickDisc = \complexDisc \cap \antidiag$ (where $\antidiag$ was defined in \eqref{eq:wickDisc}).
	Note that the inclusion $\kappa \colon \wickDisc \to \mathbb C^d$ still induces an injective map 
	$\kappa^* \colon \Hol(\complexDisc) \to \Smooth(\wickDisc)$.
	Denote the image of $\kappa^*$ by $\Ana(\wickDisc)$, and endow it with the locally convex topology that makes $\kappa^* \colon \Hol(\complexDisc) \to \Ana(\wickDisc)$ a homeomorphism.
	This topology is induced by the family of norms $\lbrace \norm\argument_\rho \mid \rho \in (0, \infty)^d, \rho < r \rbrace$ defined by
	\begin{equation}
		\norm\argument_\rho \colon \Ana(\wickDisc) \to [0, \infty) \komma \quad
		\norm f_\rho \coloneqq \sum_{L \in \mathbb N_0^d} \abs{f_L} \rho^L
	\end{equation}
	where $f = \sum_{L \in \mathbb N_0^d} f_L w^L$ is the power series expansion of $f$ around $0$. 
	Note that $\kappa^* \circ (\iota^*)^{-1} \colon \Ana(\realDisc) \to \Ana(\wickDisc)$ is a homeomorphism,
	mapping $x_i$ to $w_i$. We write $\Pol_{r}(\antidiag)$ for the locally convex vector space obtained by endowing $\Pol(\antidiag)$
	with the family of norms $\lbrace \norm \argument_\rho \mid \rho \in (0, \infty)^d, \rho < r \rbrace$.
\end{remark}

\subsubsection{Log-canonical Poisson structure} \label{subsubsec:continuity:combinatorial:logcanonical}

We saw in Example~\ref{example:poissonStructure:logcanonicald} that there are many different formal star products $\star$ that quantize the log-canonical Poisson structure, depending on a choice of formal power series $q = 1 + \I t + \mathrm{O}(t^2) \in \mathbb C \llrr t$.
If this power series is the formal Taylor expansion of a holomorphic function $q \in \Hol(\Omega)$ around $0$, then $\star$ can be evaluated to a strict associative product $\star_\hbar$ for any $\hbar \in \Omega$, as discussed in \S\ref{subsubsec:convergencelogcanonical}.
Theorem~\ref{theorem:continuity:general} can be used to topologize all these products.
However, if $\abs{q(\hbar)} \leq 1$ then there are much coarser topologies with respect to which $\star_\hbar$ is still continuous, 
allowing us to extend $\star_\hbar$ to much larger function spaces.

\begin{theorem} \label{theorem:continuity:logcanononical:strong}
	Let $q \in \Hol(\Omega)$ be a holomorphic function whose Taylor expansion around $0$ is of the form $1 + \I t + \mathrm{O}(t^2)$,
	and let $\star$ be the associated combinatorial star product quantizing the log-canonical Poisson structure $\eta$ on $\mathbb R^d$ as in Example~\ref{example:poissonStructure:logcanonicald}.
	For any $r \in (0,\infty]^d$ and any $\hbar \in \Omega$ with $\abs{q(\hbar)} \leq 1$ we have:
	\begin{enumerate}
		\item \label{item:theorem:continuity:logcanonical:strong:i}
		The product $\star_{\hbar} \colon \Pol_r(\mathbb R^d) \times \Pol_r(\mathbb R^d) \to \Pol_r(\mathbb R^d)$, obtained from $\star$ by evaluating $t \mapsto \hbar$, is continuous, 
		so that	$\star_{\hbar}$ extends uniquely to a continuous product 
		\[
		\star_{\hbar} \colon \Ana(\realDisc) \times \Ana(\realDisc) \to \Ana(\realDisc) \punkt
		\]
		\item \label{item:theorem:continuity:logcanonical:strong:ii} The resulting algebra $(\Ana(\realDisc), \star_\hbar)$ is locally multiplicatively convex.
		\item \label{item:theorem:continuity:logcanonical:strong:iii}
		For any fixed $f, g \in \Ana(\realDisc)$, the maps $\mu_{f,g}, \Delta_{f,g} \colon \Omega_{\leq 1} \to \Ana(\realDisc)$, defined as in formula \eqref{eq:productAndCommutatorMaps}, are continuous on $\Omega_{\leq 1} = \{\hbar \in \Omega \mid \abs{q(\hbar)} \leq 1\}$
		and holomorphic in the interior $(\Omega_{\leq 1})^\circ = \{\hbar \in \Omega \mid \abs{q(\hbar)} < 1\}$.
	\end{enumerate}
\end{theorem}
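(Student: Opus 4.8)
The argument is driven entirely by the explicit formula $x^K \star_\hbar x^L = q(\hbar)^{n(K,L)} x^{K+L}$ of Example~\ref{example:poissonStructure:logcanonicald}, where $n(K,L) \coloneqq \sum_{1 \le i < j \le d} K_j L_i \ge 0$, together with the fact that $\abs{q(\hbar)} \le 1$ makes every factor $\abs{q(\hbar)}^{n(K,L)} \le 1$. For \refitem{item:theorem:continuity:logcanonical:strong:i} and \refitem{item:theorem:continuity:logcanonical:strong:ii}, given $f = \sum_K f_K x^K$, $g = \sum_L g_L x^L$ in $\Pol(\mathbb R^d)$ and $\rho \in (0,\infty)^d$ with $\rho < r$, I would estimate
\[
\norm{f \star_\hbar g}_\rho = \sum_{M \in \mathbb N_0^d} \Bigl\lvert \sum_{K+L=M} f_K g_L\, q(\hbar)^{n(K,L)} \Bigr\rvert \rho^M \le \sum_{K,L \in \mathbb N_0^d} \abs{f_K}\, \abs{g_L}\, \rho^K \rho^L = \norm{f}_\rho \norm{g}_\rho \komma
\]
using $\rho^M = \rho^K \rho^L$ for $M = K+L$. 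By Corollary~\ref{corollary:requiredContinuityEstimates}, applied with $\rho' = \rho$ and $C_\rho = 1$, this shows $\star_\hbar$ is continuous on $\Pol_r(\mathbb R^d)$ and extends uniquely to a continuous product on $\Ana(\realDisc)$; as the whole generating family of norms is then submultiplicative, $(\Ana(\realDisc), \star_\hbar)$ is locally multiplicatively convex.

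For \refitem{item:theorem:continuity:logcanonical:strong:iii} I would first handle polynomial arguments $f, g$, which are dense in $\Ana(\realDisc)$ by Corollary~\ref{corollary:topologyOnAnaRd}. In this case $\mu_{f,g}(\hbar) = \sum_{K,L} f_K g_L\, q(\hbar)^{n(K,L)} x^{K+L}$ takes values in a fixed finite-dimensional subspace and each coefficient $\hbar \mapsto q(\hbar)^{n(K,L)}$ is holomorphic on all of $\Omega$ (here $n(K,L) \ge 0$, so zeros of $q$ cause no problem), so $\mu_{f,g}$ is holomorphic on $\Omega$; moreover the Taylor expansion of $\mu_{f,g}(\hbar) - \mu_{g,f}(\hbar)$ at $0$ has zero constant term and first coefficient $B_1(f,g) - B_1(g,f) = \I\{f,g\}_\eta$ (Definition~\ref{definition:formalStarProduct}), so $\Delta_{f,g}$ also extends holomorphically to all of $\Omega$, with value $\{f,g\}_\eta$ at $0$. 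To pass to general $f, g \in \Ana(\realDisc)$ I would apply Lemma~\ref{lemma:uniformEstimates:holomorphicMapsClosed} and Remark~\ref{remark:uniformEstimates:continuousMapsClosed} to the bilinear maps $(f,g) \mapsto f \star_\hbar g$ and $(f,g) \mapsto \Delta_{f,g}(\hbar)$ on the Fréchet space $\Ana(\realDisc)$: the uniform bound $\norm{f \star_\hbar g}_\rho \le \norm{f}_\rho \norm{g}_\rho$ valid on all of $\Omega_{\le 1}$ provides the required locally uniform continuity, so the sets of pairs $(f,g)$ for which $\hbar \mapsto f \star_\hbar g$ is continuous on $\Omega_{\le 1}$, respectively holomorphic on the open set $(\Omega_{\le 1})^\circ$, are closed, hence all of $\Ana(\realDisc)^2$. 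Since $\abs{q(\hbar)}^2 = 1 - 2\operatorname{Im}\hbar + \mathrm O(\abs\hbar^2)$ near $0$, the origin lies on the boundary of $\Omega_{\le 1}$, so on $(\Omega_{\le 1})^\circ$ (and on $\Omega_{\le 1} \setminus \{0\}$) one simply has $\Delta_{f,g} = \frac{1}{\I\hbar}(\mu_{f,g} - \mu_{g,f})$ with $\hbar$ locally bounded away from $0$, which transfers holomorphy (resp.\ continuity) from $\mu_{f,g}$.

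The one genuine estimate, and the step I expect to be the main obstacle, is the continuity of $\Delta_{f,g}$ at $\hbar = 0$ for non-polynomial $f, g$, where the $\frac{1}{\I\hbar}$ must be controlled uniformly. Here I would use that for non-negative integers $a \ge b$ and $\abs{q(\hbar)} \le 1$,
\[
\abs{q(\hbar)^a - q(\hbar)^b} = \abs{q(\hbar)}^b\, \abs{q(\hbar) - 1}\, \bigl\lvert q(\hbar)^{a-b-1} + \dots + 1 \bigr\rvert \le \abs{q(\hbar) - 1}\, (a - b) \komma
\]
combined with $\abs{q(\hbar) - 1} \le c\, \abs{\hbar}$ near $0$ (as $q(0) = 1$) and $\bigl\lvert n(K,L) - n(L,K) \bigr\rvert = \bigl\lvert \sum_{i<j}(K_j L_i - L_j K_i) \bigr\rvert \le \abs K\, \abs L$, to obtain, for any $\rho < \rho' < r$,
\[
\norm{\Delta_{f,g}(\hbar)}_\rho \le c \sum_{K,L} \abs{f_K}\, \abs{g_L}\, \abs K\, \abs L\, \rho^K \rho^L \le C_\rho \norm{f}_{\rho'} \norm{g}_{\rho'}
\]
uniformly for $\hbar$ in a neighbourhood of $0$ in $\Omega_{\le 1}$, the polynomial growth $\abs K\, \abs L$ being absorbed into the slightly larger seminorm $\norm{\argument}_{\rho'}$; the value at $\hbar = 0$ satisfies a bound of the same shape since $\{\argument, \argument\}_\eta$ is a bidifferential operator with polynomial coefficients. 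This is exactly the hypothesis of Remark~\ref{remark:uniformEstimates:continuousMapsClosed}, so the set of $(f,g)$ for which $\hbar \mapsto \Delta_{f,g}(\hbar)$ is continuous on $\Omega_{\le 1}$ is closed and hence, by density of polynomial pairs, all of $\Ana(\realDisc)^2$. Everything outside this estimate reduces to the submultiplicativity computation and soft closure arguments.
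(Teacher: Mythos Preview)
Your proposal is correct and follows essentially the same route as the paper's proof: the submultiplicativity estimate $\norm{f \star_\hbar g}_\rho \le \norm{f}_\rho \norm{g}_\rho$ for parts \refitem{item:theorem:continuity:logcanonical:strong:i}--\refitem{item:theorem:continuity:logcanonical:strong:ii}, and for \refitem{item:theorem:continuity:logcanonical:strong:iii} the passage from polynomial to general $f,g$ via Lemma~\ref{lemma:uniformEstimates:holomorphicMapsClosed} and Remark~\ref{remark:uniformEstimates:continuousMapsClosed}, with the key uniform bound near $\hbar=0$ coming from $\abs{\frac{1}{\hbar}(q(\hbar)^N-1)} \le CN$. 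The only cosmetic difference is that the paper phrases the near-zero estimate as $\norm{\Delta_{x^K,x^L}(\hbar)}_\rho \le C\norm{\{x^K,x^L\}_\eta}_\rho$ and then invokes continuity of the Poisson bracket, whereas you bound directly by $c\,\abs K\,\abs L\,\rho^{K+L}$ and absorb the polynomial factor into $\norm{\argument}_{\rho'}$; these are the same estimate unpacked differently.
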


\begin{proof}
	Let $f = \sum_{K \in \mathbb N_0^d} f_K x^K$ and $g = \sum_{L \in \mathbb N_0^d} g_L x^L$ with only finitely many non-zero coefficients $f_K$ respectively $g_L$.
	Evaluating the formal deformation parameter $t$ to $\hbar$ (cf.\ \S\ref{subsubsec:convergencelogcanonical}), we obtain that
	\begin{align}
		\norm{f \star_{\hbar} g}_\rho 
		&\leq
		\sum_{K,L \in \mathbb N_0^d} \abs{f_K} \abs{g_L} \norm[\big]{ x^K \star_{\hbar} x^L }_\rho \notag\\
		&=
		\sum_{K,L \in \mathbb N_0^d} \abs{f_K} \abs{g_L} \norm[\Big]{ q(\hbar)^{\sum_{1 \leq i < j \leq d} K_j L_i} x^{K+L} }_\rho \notag\\
		&\leq
		\sum_{K,L \in \mathbb N_0^d} \abs{f_K} \abs{g_L} \rho^{K + L} \notag\\
		&= \norm f_\rho \norm {g}_\rho \label{eq:continuity:logcanonical:estimate}
	\end{align}
	holds for all $\rho \in (0,\infty)^d$, $\rho < r$.
	This shows that the statement of Corollary~\ref{corollary:requiredContinuityEstimates} holds with $\rho' = \rho$ and $C_\rho = 1$,
	implying the continuity of $\star_{\hbar}$ in \refitem{item:theorem:continuity:logcanonical:strong:i},
	but also that the resulting algebra is locally multiplicatively convex, proving \refitem{item:theorem:continuity:logcanonical:strong:ii},
	since all seminorms $\norm\argument_\rho$ are submultiplicative.

	From the explicit formula \eqref{logcanonical} for $\star_{\hbar}$,
	it is clear that $\mu_{f,g}$ and $\Delta_{f,g}$ are continuous on $\Omega_{\leq 1}$ and holomorphic in $(\Omega_{\leq 1})^\circ$
	whenever $f, g \in \Pol(\mathbb R^d)$. Since the estimate \eqref{eq:continuity:logcanonical:estimate} is uniform in $\hbar$,
	the claimed continuity and holomorphy of $\mu_{f,g}$ for $f, g \in \Ana(\realDisc)$ follow from Lemma~\ref{lemma:uniformEstimates:holomorphicMapsClosed} and Remark~\ref{remark:uniformEstimates:continuousMapsClosed}.
	Similarly, the claimed continuity and holomorphy of $\Delta_{f,g}$ for $f, g \in \Ana(\realDisc)$ follows 
	at all points except $0$.
	
	Since $q$ has Taylor expansion $1 + \I t + \mathrm O (t^2)$ around $0$, the map $\hbar \mapsto \frac{1}{\I\hbar} (q(\hbar)-1)$ (extended by $1$ for $\hbar = 0$) is again holomorphic on $\Omega$, in particular bounded on a small neighbourhood $U$ of $0$ by a constant $C \geq 1$, say. For any $\hbar \in U \cap \Omega_{\leq 1}$, we obtain
	\begin{equation*}
	\abs[\bigg]{\frac 1 \hbar (q(\hbar)^{N} - 1)} 
	= \abs[\bigg]{\frac 1 \hbar (q(\hbar) - 1) \sum_{j=0}^{N-1} q(\hbar)^j} 
	\leq C N \punkt
	\end{equation*}
	Consequently, if $\hbar \in U \cap \Omega_{\leq 1} \setminus \lbrace 0 \rbrace$ then
	\begin{multline*}
	\norm{\Delta_{x^K, x^L}(\hbar)}_\rho
	=
	\norm[\bigg]{\frac 1 \hbar (q(\hbar)^{\sum_{1 \leq i < j \leq d} K_j L_i} - q(\hbar)^{\sum_{1 \leq i < j \leq d} L_j K_i}) x^{K+L}}_\rho
	\leq \\ \leq
	\norm[\bigg]{\frac 1 \hbar (q(\hbar)^{\abs{\sum_{1 \leq i < j \leq d} K_j L_i - L_j K_i}}-1) x^{K+L}}_\rho \!\!
	\leq \\ \leq
	C \norm[\bigg]{\sum_{1 \leq i < j \leq d} \!\!(K_j L_i - L_j K_i) x^{K+L}}_\rho \!\!
	=
	C \norm[\big]{ \{x^K, x^L\}_\eta }_\rho \punkt
	\end{multline*}
	Since differentiation and therefore also the Poisson bracket $\{\argument {,} \argument\}_\eta \colon \Pol_r(\mathbb R^d) \times \Pol_r(\mathbb R^d) \to \Pol_r(\mathbb R^d)$ is continuous, there exist $C' > 0$ and $0 < \rho' < r$ such that
	\begin{multline*}
		\norm{\Delta_{f,g}(\hbar)}_\rho 
		\leq \sum_{K,L \in \mathbb N_0^d} \abs{f_K} \abs{g_L} \norm{\Delta_{x^K, x^L}(\hbar)}_\rho
		\leq C \sum_{K,L \in \mathbb N_0^d} \abs{f_K} \abs{g_L} \norm[\big]{\{x^K, x^L\}_\eta}_\rho 
		\leq \\ 
		\leq C' \sum_{K,L \in \mathbb N_0^d} \abs{f_K} \abs{g_L} \norm{x^K}_{\rho'} \norm{x^L}_{\rho'}
		= C' \norm{f}_{\rho'} \norm{g}_{\rho'} \punkt
	\end{multline*}
	Since this estimate is uniform on $U \cap \Omega_{\leq 1}$ and since we also have 
	$\norm{\Delta_{f,g}(0)}_\rho = \norm{\{f,g\}_\eta}_\rho \leq C' \norm f_{\rho'} \norm g_{\rho'}$, 
	the continuity of $\Delta_{f,g}$ in $\hbar = 0$ follows from Remark~\ref{remark:uniformEstimates:continuousMapsClosed}.	
\end{proof}

Note that for $q(\hbar) = \E^{\I \hbar}$, the condition $\abs{q(\hbar)} \leq 1$ is satisfied whenever $\hbar$ lies in the closed upper half-plane,
so in particular for $\hbar \in \mathbb R$. However, when $q(\hbar) = 1+ \I \hbar$, then $0$ is the only real number satisfying $\abs{q(\hbar)} \leq 1$, and we do not obtain a strict quantization in the sense of Definition~\ref{definition:strict} from the previous theorem. This shows that the naive choice of $q$ is not always the best for continuity estimates. If $q(\hbar) = (1-\I\hbar)^{-1}$, then $\mathbb R \setminus \lbrace 0 \rbrace$ is even contained in $(\Omega_{\leq 1})^\circ$, so that the dependence of $f \star_\hbar g$ on $\hbar$ is even analytic on $\mathbb R\setminus \lbrace 0 \rbrace$.

\begin{remark}
Considering $\star$ as in Theorem \ref{theorem:continuity:logcanononical:strong} one can show via more refined arguments that {\it all} derivatives of $\mu_{f,g}$ extend continuously from 
	$(\Omega_{\leq 1})^\circ$ to $\Omega_{\leq 1}$, in particular to $\hbar = 0$. 
	The coefficients $B_n$ in the Taylor expansion $\sum_{n=0}^\infty t^n B_n(f,g)$ 
	of $\mu_{f,g}$ around $\hbar = 0$ coincide with the bidifferential operators \eqref{rearrange} defining $\star$. However, contrary to Theorem~\ref{theorem:continuity:general} (\ref{continuity:general3}),
	the radius of convergence of this Taylor series for general $f, g \in \Ana(\realDisc)$ is $0$.
\end{remark}

\begin{remark}
	The previous proof shows that $\star_\hbar$ is continuous 
	with respect to any norm $\norm\argument_\rho$ with $\rho \in (0, \infty)^d$. 
	Hence it also extends to a continuous product on the Banach space obtained by completing $\Pol(\mathbb R^d)$
	with respect to $\norm\argument_\rho$.
	However, since the topology is determined by a single norm, one sees immediately that the Poisson bracket associated to the log-canonical Poisson structure is not continuous on these spaces.
\end{remark}

\subsubsection{Other polynomial Poisson structures} \label{subsubsec:continuity:combinatorial:nonquadratic}

In this subsection, we show how to extend the star products from Examples~\ref{example:poissonStructure:nonQuadratic} and \ref{example:poissonStructure:nonQuadratic:exact} to spaces of analytic functions.
These results are easy consequences of the following theorem:

\begin{theorem} \label{theorem:continuity:exact}
	Let $N \in \mathbb N_0$, assume that $p,q,r \in \Hol(\Omega)$ are holomorphic functions with $p(0) = q(0) = r(0) = 1$ and $q \neq r^{-N}$,
	and let $\star$ be the associated combinatorial star product introduced in Proposition~\ref{proposition:nonQuadratic}~\refitem{item:proposition:nonQuadraticFormula:i},
	quantizing the Poisson structure \eqref{eq:nonQuadratic:poissonStructure} on $\mathbb R^3$.
	Write
	\begin{equation*}
		\Omega' \coloneqq \Big\lbrace \hbar \in \Omega \mathbin{\Big|} \abs{q(\hbar)} \leq 1,\, \abs{r(\hbar)} = 1, \, \tfrac{p-1}{qr^N-1} \textup{ has at worst a removable singularity at $\hbar$}\Big\rbrace \punkt
	\end{equation*}
	\begin{enumerate}
	\item For any $\hbar \in \Omega'$,
	the product $\star_\hbar \colon \Pol_\infty(\mathbb R^3) \times \Pol_\infty(\mathbb R^3) \to \Pol_\infty(\mathbb R^3)$ obtained by evaluating $\star$ for $t \mapsto \hbar$ as in Proposition~\ref{proposition:nonQuadratic}~\refitem{item:proposition:nonQuadraticFormula:iii} is continuous, 
	so that	$\star_\hbar$ extends uniquely to a continuous product 
	$\star_\hbar \colon \Ana(\mathbb R^3) \times \Ana(\mathbb R^3) \to \Ana(\mathbb R^3)$.
	\item For any fixed $f, g \in \Ana(\mathbb R^3)$, the maps $\mu_{f,g}, \Delta_{f,g} \colon \Omega' \to \Ana(\mathbb R^3)$, defined as in formula \eqref{eq:productAndCommutatorMaps}, are continuous on $\Omega'$ and holomorphic on its interior $(\Omega')^\circ$.
	\end{enumerate}
\end{theorem}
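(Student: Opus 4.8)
The plan is to follow closely the template of the proof of Theorem~\ref{theorem:continuity:logcanononical:strong}: first extract a continuity estimate for $\star_\hbar$ on $\Pol_\infty(\mathbb R^3)$ directly from the closed product formula in Proposition~\ref{proposition:nonQuadratic}~\refitem{item:proposition:nonQuadraticFormula:ii}, and then upgrade the information on the $\hbar$-dependence from polynomials to all analytic functions by means of Lemma~\ref{lemma:uniformEstimates:holomorphicMapsClosed} and Remark~\ref{remark:uniformEstimates:continuousMapsClosed} together with the density of $\Pol(\mathbb R^3)$ in $\Ana(\mathbb R^3)$.

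For the estimate, I would fix $\hbar\in\Omega'$ and bound the structure constants $r^{(j-k)\ell+jN\abs w}\lambda_m(w)$ occurring in the formula for $x^iy^jz^k\star_\hbar x^\ell y^m z^n$. Every power of $r$ that appears --- both in the prefactor and inside $\lambda_m$ --- has modulus $1$ because $\abs{r(\hbar)}=1$; this is where the hypothesis $\abs r=1$ (rather than $\leq 1$) is essential, since the exponent $(j-k)\ell$ of $r$ can be as negative as $-\abs K\abs L$ and any deviation of $\abs r$ from $1$ would produce a factor growing quadratically in the total degrees, which cannot be absorbed by any continuity estimate. For the factors $\tilde\lambda_m(w,s)$ of \eqref{eq:lambda} one has $\abs{\tilde\lambda_m(w,0)}=\abs{q^{m-\abs w}}\leq 1$ since $\abs{q(\hbar)}\leq 1$, while rewriting the geometric sum as $\tilde\lambda_m(w,1)=\tfrac{p-1}{qr^N-1}\bigl((qr^N)^{m-\abs w}-1\bigr)$ and using $\abs{qr^N}\leq 1$ together with the removable-singularity hypothesis --- which makes $\hbar\mapsto\tfrac{p-1}{qr^N-1}$ locally bounded on $\Omega'$ --- gives $\abs{\tilde\lambda_m(w,1)}\leq 2M$ for a constant $M\geq 1$ bounded on a neighbourhood of any point of $\Omega'$. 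Hence $\abs{\lambda_m(w)}\leq M^{k}$ for every $w\in\set{0,1}^k$. Summing the at most $2^k$ monomials in the product formula and dropping the constraint $\abs w\leq m$ then gives, for any $\rho\in(0,\infty)^3$ and with $D\coloneqq M(1+\rho_1^N/(\rho_2\rho_3))$,
\begin{equation*}
  \norm[\big]{x^iy^jz^k\star_\hbar x^\ell y^m z^n}_\rho \leq D^{k}\,\rho_1^{i+\ell}\rho_2^{j+m}\rho_3^{k+n} = \norm[\big]{x^iy^jz^k}_{\rho'}\,\norm[\big]{x^\ell y^m z^n}_{\rho}
\end{equation*}
with $\rho'\coloneqq(\rho_1,\rho_2,D\rho_3)\geq\rho$, whence $\norm{f\star_\hbar g}_\rho\leq\norm f_{\rho'}\norm g_{\rho'}$ for all $f,g\in\Pol(\mathbb R^3)$; since $\rho'$ is again a \emph{finite} tuple, Corollary~\ref{corollary:requiredContinuityEstimates} (with $r=(\infty,\infty,\infty)$) yields part~(1). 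For $r$ finite the argument breaks down, as $D\rho_3$ may exceed $r_3$, which is why the statement is confined to $\Pol_\infty(\mathbb R^3)$.

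To handle the $\hbar$-dependence I would first observe that, since $q\neq r^{-N}$, the set $Z=\{qr^N=1\}$ is discrete; the removable-singularity hypothesis together with Riemann's theorem then makes $\tfrac{p-1}{qr^N-1}$ holomorphic on $(\Omega')^\circ$ and continuous on $\Omega'$, so all coefficient functions in Proposition~\ref{proposition:nonQuadratic}~\refitem{item:proposition:nonQuadraticFormula:ii} are holomorphic on $(\Omega')^\circ$ and continuous on $\Omega'$. Hence for polynomial $f,g$ the map $\mu_{f,g}$ takes values in a fixed finite-dimensional subspace and is holomorphic on $(\Omega')^\circ$ and continuous on $\Omega'$, while $\Delta_{f,g}$ is continuous on $\Omega'$ by Proposition~\ref{proposition:nonQuadratic}~\refitem{item:proposition:nonQuadraticFormula:iv} and holomorphic on $(\Omega')^\circ$ (its numerator vanishes at $\hbar=0$, producing a removable singularity). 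Because the estimate above is locally uniform in $\hbar\in\Omega'$ ($M$, hence $D$, being locally bounded on $\Omega'$), Lemma~\ref{lemma:uniformEstimates:holomorphicMapsClosed} and Remark~\ref{remark:uniformEstimates:continuousMapsClosed}, applied with domains $(\Omega')^\circ$ and $\Omega'$ respectively, show that the sets of $(f,g)\in\Ana(\mathbb R^3)\times\Ana(\mathbb R^3)$ for which $\mu_{f,g}$ is holomorphic on $(\Omega')^\circ$, respectively continuous on $\Omega'$, are closed; as they contain the dense subspace $\Pol(\mathbb R^3)\times\Pol(\mathbb R^3)$, they are everything, and the corresponding statements for $\Delta_{f,g}$ follow at each $\hbar_0\neq 0$ from $\norm{\Delta_{f,g}(\hbar)}_\rho\leq\tfrac 2{\abs\hbar}\norm f_{\rho'}\norm g_{\rho'}$.

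The one point that genuinely requires more work --- and which I expect to be the main obstacle --- is the continuity of $\Delta_{f,g}$ at $\hbar=0$ in the case $0\in\Omega'\setminus(\Omega')^\circ$, which is the typical situation (e.g.\ when $\abs r=1$ only on $\mathbb R$, so $(\Omega')^\circ=\emptyset$) and where the removable-singularity shortcut is unavailable. Here I would argue exactly as in the final part of the proof of Theorem~\ref{theorem:continuity:logcanononical:strong}: using that $q,r,p$ are holomorphic with value $1$ at $0$, the difference quotients $\tfrac 1\hbar(q(\hbar)-1)$, $\tfrac 1\hbar(r(\hbar)-1)$, $\tfrac 1\hbar(p(\hbar)-1)$ remain bounded near $0$, and expanding the commutator $\tfrac 1{\I\hbar}\bigl(x^K\star_\hbar x^L-x^L\star_\hbar x^K\bigr)$ by means of Proposition~\ref{proposition:nonQuadratic}~\refitem{item:proposition:nonQuadraticFormula:ii} one bounds it, locally uniformly near $\hbar=0$, by a continuous multiple of $\norm{\{x^K,x^L\}_\eta}_{\rho'}$; since the Poisson bracket is continuous on $\Pol_\infty(\mathbb R^3)$ this supplies the uniform estimate needed to invoke Remark~\ref{remark:uniformEstimates:continuousMapsClosed} one last time at $\hbar=0$, completing part~(2).
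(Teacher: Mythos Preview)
Your proposal is essentially correct and follows the same strategy as the paper: bound the coefficients $\lambda_m(w)$ via the geometric-sum rewrite of $\tilde\lambda_m(w,1)$, sum over $w\in\{0,1\}^k$ to get a continuity estimate $\norm{f\star_\hbar g}_\rho\leq\norm f_{\rho'}\norm g_{\rho'}$ locally uniformly in $\hbar$, and then invoke Lemma~\ref{lemma:uniformEstimates:holomorphicMapsClosed} and Remark~\ref{remark:uniformEstimates:continuousMapsClosed}. Your estimate is in fact slightly sharper than the paper's (you keep a general $\rho$ and only enlarge the third component, whereas the paper restricts to $\rho=(\varrho,\varrho,\varrho)$ with $\varrho\geq 1$ and obtains $\rho'=(2C\varrho^{N+1},\dots)$); both are adequate for $\Pol_\infty$.

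One point deserves care: your sketch for the continuity of $\Delta_{f,g}$ at $\hbar=0$ proposes to bound $\norm{\Delta_{x^K,x^L}(\hbar)}_\rho$ by a constant times $\norm{\{x^K,x^L\}_\eta}_{\rho'}$, mimicking Theorem~\ref{theorem:continuity:logcanononical:strong}. That worked in the log-canonical case because both the star commutator and the Poisson bracket are scalar multiples of the \emph{same} monomial $x^{K+L}$. Here the Poisson bracket can vanish for specific $K,L$ while the star commutator does not (the higher $\abs w$ terms need not disappear), so the intermediate bound through $\{x^K,x^L\}_\eta$ is not available in general. The paper instead bounds the commutator coefficients directly: for $w\neq(0,\dots,0)$ one uses that at least one factor $\tilde\lambda_m(\cdot,1)$ carries a $(p-1)=\mathrm O(\hbar)$, giving $\abs{\tfrac1\hbar\tilde\lambda_m(w,1)}\leq m\tilde C$, while the $w=(0,\dots,0)$ contributions from $x^K\star_\hbar x^L$ and $x^L\star_\hbar x^K$ differ by $\mathrm O(\hbar)$ with polynomial growth in $\abs K,\abs L$ (via $\tfrac1\hbar(r^a q^b-1)=\mathrm O(\abs a+\abs b)$). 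These polynomial factors are then absorbed into a larger $\rho'$, yielding the required locally uniform estimate $\norm{\Delta_{x^K,x^L}(\hbar)}_\rho\leq C\norm{x^K}_{\rho'}\norm{x^L}_{\rho'}$ without passing through the Poisson bracket.
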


\begin{proof}
	Fix a compact set $K \subset \Omega'$. We claim that there exists a constant $C \geq 1$ such that $\abs{\tilde\lambda_m(w,s)} \leq C$ holds 
	for all $\hbar \in K$, $m \in \mathbb N_0$, $s \in \lbrace 0 , 1 \rbrace$ and words $w$ with letters $0$ and $1$. (Here, $\tilde\lambda_m(w,s)$ is defined as in \eqref{eq:lambda}, but with $p$, $q$, and $r$ evaluated for $t \mapsto \hbar$.) Indeed,
	the fraction on the right-hand side of
	\begin{equation*}
		\abs{\tilde\lambda_m(w,1)} = \abs[\bigg]{(p(\hbar)-1) \sum_{j=0}^{m-\abs w-1} (q(\hbar) r(\hbar)^N)^j} = \abs[\bigg]{\frac{p(\hbar)-1}{q(\hbar)r(\hbar)^N-1}} \abs[\big]{(q(\hbar)r(\hbar)^N)^{m-\abs w}-1}
	\end{equation*}
	is by assumption holomorphic on an open set containing $\Omega'$, and therefore bounded on $K$, 
	and the remaining factor is bounded by $2$ on $\Omega'$. Since $\abs{\tilde \lambda_m(w,0)} \leq 1$ on $\Omega'$, the claim follows.
	Consequently, $\abs{\lambda_m(w)} \leq C^k$ holds for all $k,m \in \mathbb N_0$, $w \in \set{0,1}^k$, and $\hbar \in K$. 
	For any $\rho = (\varrho, \dots, \varrho) \in (0,\infty)^d$ with $\varrho \geq 1$ we estimate
	\begin{align*}
		\norm{x^i y^j z^k \star_\hbar x^\ell y^m z^n}_\rho
		&\leq \sum_{w \in \set{0,1}^k} \abs{r(\hbar)}^{(j-k) \ell + jN\abs w} \abs{\lambda_{m}(w)} 
		\norm[\big]{x^{i+\ell+N\abs w} y^{j+m-\abs w} z^{k+n-\abs w}}_\rho \\
		&\leq \sum_{w \in \set{0,1}^k} C^{k} \varrho^{i+j+k+\ell+m+n + (N-2)\abs w}  \\
		&\leq 2^{k} C^{k} \varrho^{i+j+k+\ell+m+n+N k} \\
		&\leq (2C\varrho^{N+1})^{i+j+k+\ell+m+n}
		\\
		&= \norm{x^i y^j z^k}_{\rho'} \norm{x^\ell y^m z^n}_{\rho'} 
	\end{align*}
	where $\rho' = (2C \varrho^{N+1}, \dots, 2C \varrho^{N+1})$.
	We used that $\varrho^{(N-2) \abs w} \leq \varrho^{N \abs w} \leq \varrho^{N k}$ if $w \in \set{0,1}^k$, 
	and that the set $\set{0,1}^k$ has $2^{k}$ elements.
	Hence we also have
	\begin{align*}
		\norm[\bigg]{\sum_{i,j,k=0}^\infty c_{i,j,k} x^i y^j z^k \star_\hbar \! \sum_{\ell,m,n = 0}^\infty \! d_{\ell,m,n} x^\ell y^m z^n}_{\rho} &\leq \sum_{i,j,k,\ell,m,n=0}^\infty \! \abs{c_{i,j,k}}\abs{d_{\ell,m,n}} \norm{x^i y^j z^k}_{\rho'} \norm{x^\ell y^m z^n}_{\rho'} \\
		&= \norm[\bigg]{\sum_{i,j,k=0}^\infty c_{i,j,k} x^i y^j z^k}_{\rho'} \norm[\bigg]{\sum_{\ell,m,n=0}^\infty \! d_{\ell,m,n} x^\ell y^m z^n}_{\rho'} .
	\end{align*}
Note that any point in $\Omega'$ has a compact neighbourhood and that the above estimates are uniform for $\hbar$ in an arbitrary compact subset $K \subset \Omega$, so that the continuity of $\mu_{f,g}$ in $\Omega'$ and of $\Delta_{f,g}$ in $\Omega'\setminus \lbrace 0 \rbrace$,
	as well as the holomorphy of $\mu_{f,g}$ in $(\Omega')^\circ$ and of $\Delta_{f,g}$ in $(\Omega')^\circ \setminus \lbrace 0 \rbrace$,
	follow from Proposition~\ref{proposition:nonQuadratic}~\refitem{item:proposition:nonQuadraticFormula:iv},
	Lemma~\ref{lemma:uniformEstimates:holomorphicMapsClosed}, and Remark~\ref{remark:uniformEstimates:continuousMapsClosed}.
	Continuity respectively holomorphy of $\Delta_{f,g}$ at $\hbar = 0$ (if this point is contained in $\Omega'$ respectively $(\Omega')^\circ$) follow similarly from uniform estimates of $\Delta_{f,g}$ on a small neighbourhood of $0$.
	These estimates can be obtained as above, 
	after noting that the summand for $w = (0,\dotsc,0)$ cancels out in \eqref{eq:productformula:nonQuadratic:exact},
	and that $\abs{\frac 1 \hbar \smash{\tilde\lambda}_m(w,1)} \leq m \smash{\tilde C}$ for some constant $\smash{\tilde C}$ independent of $\hbar \in K$, $m \in \mathbb N_0$, and the word $w$.
\end{proof}
Note that the set $\Omega'$ in the previous theorem depends crucially on $p$, $q$, and $r$. 
It has empty interior (so that the statement about holomorphy is vacuous) unless $r = 1$.

\begin{example}
Choosing $p (\hbar) = q (\hbar) = \E^{\I \hbar}$ and $r (\hbar)=1$ in Theorem \ref{theorem:continuity:exact}, the family $A_\hbar = (\Ana(\mathbb R^3), \star_\hbar)$ defines a strict quantization (in the sense of Definition~\ref{definition:strict}) of the Poisson structure introduced in Example~\ref{example:poissonStructure:nonQuadratic}, where $\hbar$ may assume any value in $\Omega'$ which coincides with the closed upper half-plane. Choosing $q (\hbar) = \frac{1}{1 - \I \hbar}$ instead, we even have that $\Omega'$ contains $\mathbb R \setminus \{ 0 \}$ in its interior. 
\end{example}

\begin{example}
Choosing $p(\hbar) = (N + \E^{\I (N+1) \hbar})/(N+1)$ and $q(\hbar) = r(\hbar) = \E^{\I \hbar}$ in Theorem \ref{theorem:continuity:exact}, the family $A_\hbar = (\Ana(\mathbb R^3), \star_\hbar)$ defines a strict quantization of the Poisson structure introduced in Example~\ref{example:poissonStructure:nonQuadratic:exact}.
	In this case, the strict quantization is defined for $\hbar \in \mathbb R$.
	Note that other seemingly natural choices like setting $q(\hbar) = 1+ \I \hbar$ and $p(\hbar) = r(\hbar) = \E^{\I \hbar}$ or setting $q(\hbar) = r(\hbar) = p(\hbar) = 1 + \I \hbar$ lead to $\Omega' \cap \mathbb R = \{ 0 \}$ and hence do not define strict quantizations in the sense of Definition~\ref{definition:strict}.
\end{example}

Another striking special case of Theorem~\ref{theorem:continuity:exact} is the following.

\begin{example}[Quantum Weyl algebra]
	\label{example:quantumWeyl}
	Choosing $p(\hbar) = 1 + \I \hbar$,
	$q(\hbar) = \E^{\I \lambda \hbar}$ for any fixed $\lambda \in (0, \infty)$,
	and $r(\hbar) = 1$ let us apply Theorem \ref{theorem:continuity:exact} for $N=0$ and consider the subspace $\mathbb C [y, z] \subset \mathbb C [x, y, z] = \Pol (\mathbb R^3)$ which is closed under the Poisson bracket $\overline \eta = (\lambda y z + 1) \smash{\frac{\partial}{\partial z}} \wedge \smash{\frac{\partial}{\partial y}}$ obtained by restricting $\eta$ \eqref{eq:nonQuadratic:poissonStructure} to $\mathbb R^2$.
	
	We obtain a combinatorial star product $\star$ on $\mathbb R^2$ associated to the above choices which is determined by
	$\widetilde \phi (z y) = yz (\E^{\I \lambda t} - 1) + \I t$, and hence
	\[
	(\Pol (\mathbb R^2) \llrr{t}, \star) \simeq \mathbb C \langle y,z \rangle \llrr{t} / (z y - \E^{\I \lambda t} y z - \I t)
	\]
	which becomes precisely the so-called \emph{quantum Weyl algebra} when evaluating $t \mapsto \hbar$. 
	Recalling that $\Pol_\infty(\mathbb R^2) = \Pol_{\TR[0]}(\mathbb R^2)$ (Remark \ref{remark:comparison}), Theorem~\ref{theorem:continuity:exact} therefore shows that the resulting strict star product is continuous with respect to the $\TR[0]$\=/topology giving a strict quantization $A_\hbar = (\Ana (\mathbb R^2), \star_\hbar)$.
	
	In particular, this implies continuity with respect to the {\TRtop} for all $R \geq 0$ since $\overline \eta$ is a (non-homogeneous) quadratic Poisson structure. Yet, the standard-ordered Weyl product $\star^{\mathrm{std}}_\hbar$ obtained for $\lambda = 0$ is only continuous with respect to the {\TRtop} for $R \geq \frac12$ (cf.\ Theorem~\ref{theorem:continuity:weyl}). It is quite surprising that in the quantum Weyl algebra, the extra factor of $\E^{\I \lambda \hbar}$ --- which for fixed $\hbar$ can be chosen arbitrarily close to $1$ --- appears to give a much larger strict deformation quantization than the standard Weyl algebra. In fact, this observation can even be generalized to (quantum) Weyl algebras in arbitrary even dimensions.
\end{example}

\begin{remark}
\label{remark:lowerOrderTerms}
A similar phenomenon to Example~\ref{example:quantumWeyl} can also be observed when perturbing the log-canonical Poisson structure by lower orders, i.e.\ by constant and linear terms. Naively, one might expect that the convergence and continuity properties of a formal quantization
of this perturbed Poisson structure should not be better than those of a constant or linear Poisson structure, since it contains constant and linear terms. However, it turns out that any such Poisson structure still admits a strict quantization in the sense of Definition~\ref{definition:strict}
with underlying Fr\'echet space $\Ana(\mathbb R^d) = \widehat{\Pol}_{\TR[0]} (\mathbb R^d)$, contrary to the continuity results for constant or linear Poisson structures (Theorems \ref{theorem:continuity:weyl} and \ref{theorem:continuity:gutt}).

To see this, one may consider the Poisson structure $\eta$ determined by 
\begin{equation*}
	\{x_d, x_1 \}_\eta = x_1 x_d + c \qquad\text{and}\qquad \{x_j, x_i\}_\eta = x_i x_j
\end{equation*}
for $1 \leq i < j \leq d$ with $(i,j) \neq (1,d)$ which can be viewed as a $d$-dimensional generalization of the log-canonical Poisson structure and the Poisson structure of Example \ref{example:quantumWeyl}. This Poisson structure can be quantized by a combinatorial star product $\star$, and convergence and continuity results on the upper half-plane can be obtained in a similar way.


Next, let $T \colon \Pol(\mathbb R^d) \to \Pol(\mathbb R^d)$ be the algebra homomorphism (with respect to the classical commutative product) determined by $T(x_i) = x_i + c_i$, in other words $T$ is the pull-back with respect to the translation of $\mathbb R^d$ by $(c_1, \dotsc, c_d)$.
It is easy to show that $f \mathbin{\star'} g \coloneqq T(T^{-1} f \star T^{-1} g)$ is again a combinatorial star product,
namely the one defined by $\widetilde\phi' (x_j x_i) = T (\widetilde \phi(x_j x_i))$, and $\star'$ quantizes the Poisson structure $\eta'$ with Poisson bracket determined by
\begin{equation*}
	\{x_d, x_1\}_{\eta'} = x_1 x_d + c_d x_1 + c_1 x_d + c_1 c_d + c \qquad\text{and}\qquad \{x_j, x_i\}_{\eta'} = x_i x_j + c_j x_i + c_i x_j + c_i c_j
\end{equation*}
for $1 \leq i < j \leq d$ with $(i,j) \neq (1,d)$. Since both $T$ and $T^{-1}$ are continuous with respect to the $\TR[0]$\=/topology,
evaluating and completing works in just the same way as for $\star$ and we obtain a strict quantization $\star'_\hbar \colon \Ana(\mathbb R^d) \times \Ana(\mathbb R^d) \to \Ana(\mathbb R^d)$ of $\eta'$, which is defined for all $\hbar$ in the closed upper half-plane.

Finally, checking which restrictions the Jacobi identity imposes on the coefficients of a Poisson structure with Poisson bracket
$\lbrace x_j, x_i \rbrace = x_i x_j + \text{\emph{lower-order terms}}$ for $1 \leq i < j \leq d$ reveals that any such Poisson structure is 
already of the form above for certain $c, c_1, \dots, c_d \in \mathbb R^d$ if $d \neq 3$.
If $d = 3$, then $\{x_d , x_1\}$ may contain an extra summand $c' x_2$ with $c' \in \mathbb R$.
However, a strict quantization in this one remaining case can be constructed by similar methods.
\end{remark}

\subsubsection{Continuity of symmetrized combinatorial star products} \label{subsec:continuity:symmetrized}

We now derive continuity estimates for the symmetrized combinatorial star products introduced in \S\ref{subsubsection:symmetrized}.
Note that the symmetrized Moyal--Weyl product satisfies the same continuity estimates 
	as other non-symmetrized star products for constant Poisson structures 
	because the equivalence transformation between those products is continuous, see \cite[Prop.~5.9]{waldmann1}.
However, the continuity properties of the standard-ordered and the symmetrized combinatorial star products for the log-canonical Poisson structure are different.

As usual, assume that $q \in \Hol(\Omega)$ and $\hbar \in \Omega$. If $q(\hbar)$ is not a root of unity 
(i.e.\ $q(\hbar)^n \neq 1$ for all $n \in \mathbb N$),
then $[k]_{q(\hbar)} = \frac{1 - q(\hbar)^k}{1-q(\hbar)}$ and the $q$-multinomial coefficient can be rewritten as
\begin{equation}
\label{eq:qmultinomial:asFraction}
\begingroup
\binom{\abs K}{K}_{q(\hbar)} = 
\frac{(1-q(\hbar)^{\abs K})(1-q(\hbar)^{\abs K-1}) \dots 
(1-q(\hbar))}{\text{\footnotesize $(1-q(\hbar)^{K_1})(1-q(\hbar)^{K_1-1}) \dots (1-q(\hbar)) \dots (1-q(\hbar)^{K_d})(1-q(\hbar)^{K_d-1}) \dots (1-q(\hbar))$}} \punkt
\endgroup
\end{equation}
If $q(\hbar) \neq 0$, we obtain the identity 
\begin{align} \notag
	\binom{\abs K}{K}_{q(\hbar)} 
	&= \frac{
			(-1)^{\abs K} q(\hbar)^{\abs K + (\abs K-1) + \dots + 1}(1-q(\hbar)^{-\abs K})(1-q(\hbar)^{-\abs K+1}) \dots (1-q(\hbar)^{-1})
		}{
			\prod_{j=1}^d (-1)^{K_j} q(\hbar)^{K_j +\dots + 1}(1-q(\hbar)^{-K_j}) \dots (1-q(\hbar)^{-1}) 
		} \\
	&= q(\hbar)^{\frac 1 2 (\abs{K}^2 - K_1^2 - \dots - K_d^2)} \binom{\abs 
		K}{K}_{q(\hbar)^{-1}}  \punkt \label{eq:qmultinomial:relationOfQandQinverse}
\end{align}

\begin{lemma} \label{lemma:qmultinomials:estimates}
	Let $d \in \mathbb N$ be fixed.
	If $\abs {q(\hbar)} < 1$, then there are constants $c_{q(\hbar)}, C_{q(\hbar)} > 0$ such that
	\begin{equation*}
	c_{q(\hbar)} \leq \abs[\bigg]{\binom{\abs K}{K}_{q(\hbar)}} \leq  C_{q(\hbar)}
	\end{equation*}
	holds for all $K \in \mathbb N_0^d$.
	If $\abs {q(\hbar)} > 1$, then
	\begin{equation*}
	c_{q(\hbar)^{-1}} \abs{q(\hbar)}^{\frac 1 2 (\abs K^2 - K_1^2 - \dots - K_d^2)} 
	\leq 
	\abs[\bigg]{\binom{\abs K}{K}_{q(\hbar)}}
	\leq 
	C_{q(\hbar)^{-1}} \abs {q(\hbar)}^{\frac 1 2 (\abs K^2 - K_1^2 - \dots - K_d^2)}
	\end{equation*}
	holds for all $K \in \mathbb N_0^d$.
\end{lemma}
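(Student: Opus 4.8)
The plan is to reduce both estimates to uniform bounds on finite partial products of the form $\prod_{j=1}^n\bigl(1 - q(\hbar)^j\bigr)$. Since $\abs{q(\hbar)} < 1$ in the first case and $\abs{q(\hbar)} > 1$ in the second, in either case $q(\hbar) \neq 1$, so I would first rewrite $[k]_{q(\hbar)} = (1 - q(\hbar)^k)/(1 - q(\hbar))$ and hence
\[
	\binom{\abs K}{K}_{q(\hbar)} = \frac{\prod_{j=1}^{\abs K}\bigl(1 - q(\hbar)^j\bigr)}{\prod_{i=1}^d \prod_{j=1}^{K_i}\bigl(1 - q(\hbar)^j\bigr)} \komma
\]
where the factors $(1 - q(\hbar))$ in numerator and denominator cancel because each side contains exactly $\abs K = K_1 + \dotsb + K_d$ of them. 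Thus the numerator, and likewise each of the $d$ factors making up the denominator, is a finite partial product of $\prod_{j \geq 1}\bigl(1 - q(\hbar)^j\bigr)$.

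For the case $\abs{q(\hbar)} < 1$, set $\varrho \coloneqq \abs{q(\hbar)} \in (0,1)$. As $\sum_{j \geq 1}\varrho^j < \infty$, the infinite products $m \coloneqq \prod_{j \geq 1}(1 - \varrho^j)$ and $M \coloneqq \prod_{j \geq 1}(1 + \varrho^j)$ converge to strictly positive finite numbers (every factor of the first lies in $(0,1)$). From $1 - \varrho^j \leq \abs{1 - q(\hbar)^j} \leq 1 + \varrho^j$ one obtains
\[
	m \leq \abs[\Big]{\prod_{j=1}^n\bigl(1 - q(\hbar)^j\bigr)} \leq M
\]
for every $n \geq 0$, the empty product $1$ also lying in $[m, M]$. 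Substituting this into the displayed formula (the denominator being a product of $d$ such partial products) gives $m/M^d \leq \abs{\binom{\abs K}{K}_{q(\hbar)}} \leq M/m^d$, so one may take $c_{q(\hbar)} = m/M^d$ and $C_{q(\hbar)} = M/m^d$.

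For the case $\abs{q(\hbar)} > 1$, the value $q(\hbar)$ is automatically nonzero and not a root of unity, so the identity \eqref{eq:qmultinomial:relationOfQandQinverse} applies and, after taking absolute values, reads
\[
	\abs[\Big]{\binom{\abs K}{K}_{q(\hbar)}} = \abs{q(\hbar)}^{\frac 1 2\bigl(\abs K^2 - K_1^2 - \dotsb - K_d^2\bigr)} \abs[\Big]{\binom{\abs K}{K}_{q(\hbar)^{-1}}} \punkt
\]
Since $\abs{q(\hbar)^{-1}} < 1$, the first part — now applied to $q(\hbar)^{-1}$ — bounds the last factor between the constants $c_{q(\hbar)^{-1}}$ and $C_{q(\hbar)^{-1}}$, which is exactly the claimed two-sided estimate. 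The only point requiring a little care is the lower bound on $\abs{\prod_{j=1}^n(1 - q(\hbar)^j)}$: it rests on the elementary inequality $\abs{1 - q^j} \geq 1 - \abs{q}^j$ together with the positivity of $\prod_{j \geq 1}(1 - \abs{q(\hbar)}^j)$ when $\abs{q(\hbar)} < 1$; everything else is routine bookkeeping.
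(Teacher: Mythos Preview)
Your proof is correct and follows essentially the same route as the paper: rewrite the $q$-multinomial coefficient via \eqref{eq:qmultinomial:asFraction} as a ratio of partial products $\prod_{j=1}^n(1-q(\hbar)^j)$, bound each factor by $1\pm\abs{q(\hbar)}^j$, and use convergence of the resulting infinite products (the paper phrases these as $q$-Pochhammer symbols); the case $\abs{q(\hbar)}>1$ is then reduced to the first via \eqref{eq:qmultinomial:relationOfQandQinverse} in both arguments. Your constants $m,M$ are exactly the paper's $c_{q(\hbar)},C_{q(\hbar)}$ up to notation.
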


\begin{proof}
	Since the $q$-Pochhammer symbol $(a;q(\hbar))_\infty = \prod_{k=0}^\infty (1-a q(\hbar)^k)$ is convergent for 
	$\abs {q(\hbar)} < 1$,
	it follows from \eqref{eq:qmultinomial:asFraction} that
	\begin{equation*}
	\frac{(1;\abs {q(\hbar)})_\infty}{(-1;\abs {q(\hbar)})^d_\infty} \leq \abs[\bigg]{\binom{\abs K}{K}_{q(\hbar)}} \leq \frac{(-1;\abs 
		{q(\hbar)})_\infty}{(1;\abs {q(\hbar)})^d_\infty} \punkt
	\end{equation*}
	The case $\abs {q(\hbar)} > 1$ can be reduced to $\abs {q(\hbar)} < 1$ by using 
	\eqref{eq:qmultinomial:relationOfQandQinverse}.
\end{proof}
We can now estimate the coefficients of the symmetrized combinatorial star product for the log-canonical Poisson structure on $\mathbb R^d$.
Introduce the abbreviation 
\begin{equation}
\Lambda_\hbar(K,L) \coloneqq \frac{\binom{\abs K}{K}_{q(\hbar)} \binom{\abs 
		L}{L}_{q(\hbar)}}{\binom{\abs{K+L}}{K+L}_{q(\hbar)}}
q(\hbar)^{\sum_{1 \leq i < j \leq d} K_j L_i}
\end{equation}
so that $x^K \varstar_{\hbar} x^L = \Lambda_\hbar(K,L) \frac{\binom{\abs{K+L}}{K+L}}{\binom{\abs K}{K} \binom{\abs L}{L}} x^{K + L}$.

\begin{lemma}
	With the notation above assume that $\abs{q(\hbar)} \neq 1$.
	Then there is a constant $C_{\Lambda}$ (depending on $\hbar$) such that
	$\abs{\Lambda_\hbar(K,L)} \leq C_{\Lambda}$ holds for all multi-indices $K, L \in \mathbb N_0^d$.
\end{lemma}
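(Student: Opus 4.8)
The plan is to bound $\abs{\Lambda_\hbar(K,L)}$ by treating the two regimes $\abs{q(\hbar)} < 1$ and $\abs{q(\hbar)} > 1$ separately, in each case reducing everything to the two-sided estimates on $q$-multinomial coefficients furnished by Lemma~\ref{lemma:qmultinomials:estimates}. Since
\[
\Lambda_\hbar(K,L) = \frac{\binom{\abs K}{K}_{q(\hbar)}\binom{\abs L}{L}_{q(\hbar)}}{\binom{\abs{K+L}}{K+L}_{q(\hbar)}}\, q(\hbar)^{\sum_{1\leq i<j\leq d} K_j L_i},
\]
the only two things to control are the ratio of the three $q$-multinomial coefficients and the modulus of the explicit power of $q(\hbar)$.

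For $\abs{q(\hbar)} < 1$ this is immediate: the first part of Lemma~\ref{lemma:qmultinomials:estimates} gives constants $0 < c \leq C$ (depending on $\hbar$) with $c \leq \abs[\big]{\binom{\abs M}{M}_{q(\hbar)}} \leq C$ for every $M \in \mathbb N_0^d$, so the ratio is $\leq C^2/c$; and since the exponent $\sum_{1\leq i<j\leq d} K_j L_i$ is a non-negative integer while $\abs{q(\hbar)} < 1$, the factor $\abs{q(\hbar)}^{\sum_{i<j} K_j L_i}$ is $\leq 1$. Hence $\abs{\Lambda_\hbar(K,L)} \leq C^2/c$ uniformly in $K,L$.

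The case $\abs{q(\hbar)} > 1$ is where the work lies, and I expect the exponent bookkeeping to be the main obstacle. Here the second part of Lemma~\ref{lemma:qmultinomials:estimates} (equivalently \eqref{eq:qmultinomial:relationOfQandQinverse} combined with the first part applied to $q(\hbar)^{-1}$) shows that $\abs[\big]{\binom{\abs M}{M}_{q(\hbar)}}$ equals, up to a bounded multiplicative factor, $\abs{q(\hbar)}^{\Phi(M)}$ with $\Phi(M) = \tfrac{1}{2}\bigl(\abs M^2 - M_1^2 - \dots - M_d^2\bigr)$. Thus the ratio of the three $q$-multinomials is bounded by a constant times $\abs{q(\hbar)}^{\Phi(K)+\Phi(L)-\Phi(K+L)}$, and a short computation using $\abs{K+L} = \abs K + \abs L$ gives
\[
\Phi(K) + \Phi(L) - \Phi(K+L) = \sum_i K_i L_i - \abs K\,\abs L = -\sum_{i\neq j} K_i L_j = -\sum_{1\leq i<j\leq d}\bigl(K_i L_j + K_j L_i\bigr).
\]
Adding the exponent $\sum_{i<j} K_j L_i$ coming from the explicit power of $q(\hbar)$ in $\Lambda_\hbar(K,L)$, the total exponent of $\abs{q(\hbar)}$ collapses to $-\sum_{i<j} K_i L_j \leq 0$, so that power-of-$q$ contribution is again $\leq 1$ (here using $\abs{q(\hbar)} > 1$), and $\abs{\Lambda_\hbar(K,L)}$ is bounded by the constant coming from Lemma~\ref{lemma:qmultinomials:estimates}. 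Everything except this cancellation of exponents is a routine application of the preceding lemma, so the resulting constant $C_\Lambda$ may be taken to depend only on $\hbar$ and $d$.
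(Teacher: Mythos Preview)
Your proof is correct and follows essentially the same approach as the paper: split into the cases $\abs{q(\hbar)}<1$ and $\abs{q(\hbar)}>1$, apply Lemma~\ref{lemma:qmultinomials:estimates} in each case, and in the second case verify that the combined exponent of $\abs{q(\hbar)}$ is non-positive. Your introduction of $\Phi(M)=\tfrac12(\abs M^2-\sum_i M_i^2)$ and the explicit intermediate identity $\Phi(K)+\Phi(L)-\Phi(K+L)=-\sum_{i<j}(K_iL_j+K_jL_i)$ make the exponent bookkeeping slightly more transparent than in the paper, but the argument is the same.
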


\begin{proof}
	If $\abs {q (\hbar)} < 1$ the estimate follows immediately from 
	Lemma~\ref{lemma:qmultinomials:estimates}.
	If $\abs{q(\hbar)} > 1$, then the same lemma implies that 
	\begin{align*}
	\abs{\Lambda_\hbar(K,L)}
	&\leq 
	C_{\Lambda} \abs{q(\hbar)}^{\frac 1 2 (\abs K^2 - \sum_{i=1}^d K_i^2 + \abs L^2 - \sum_{i=1}^d L_i^2 - 
	\abs{K+L}^2 + \sum_{i=1}^d (K+L)_i^2)} \abs {q(\hbar)}^{\sum_{1 \leq i < j \leq d} K_j L_i} 
	\\
	&= C_{\Lambda} \abs {q(\hbar)}^{-\sum_{1 \leq j < i \leq d} K_j L_i} 
	\\
	&\leq C_{\Lambda} 
	\end{align*}
	where $C_{\Lambda} = C_{q(\hbar)^{-1}}^2 / c_{q(\hbar)^{-1}}$ holds for all $K,L \in \mathbb N_0^d$.
\end{proof}

\begin{theorem}
	Let $q \in \Hol(\Omega)$ be a holomorphic function whose Taylor expansion around $0$ is of the form $1 + \I t + \mathrm O(t^2)$,
	and let $\varstar$ be the symmetrized combinatorial star product for the log-canonical Poisson structure on $\mathbb R^d$, 
	introduced in Example~\ref{example:symmetrizedStarProdForLogCanononical}.
	Write $\Omega_{\neq 1} \coloneqq \set{ \hbar \in \Omega \mid \abs{q(\hbar)} \neq 1}$. 
	\begin{enumerate}
	\item For any $\hbar \in \Omega_{\neq 1} \cup \set 0$, 
	the product $\varstar_{\hbar} \colon \Pol_\infty(\mathbb R^d) \times \Pol_\infty(\mathbb R^d) \to \Pol_\infty(\mathbb R^d)$, obtained from $\varstar$ by evaluating $t \mapsto \hbar$, is continuous,
	so that	$\varstar_{\hbar}$ extends uniquely to a continuous product 
	\[
	\varstar_{\hbar} \colon \Ana(\mathbb R^d) \times \Ana(\mathbb R^d) \to \Ana(\mathbb R^d) \punkt
	\]
	\item For any fixed $f, g \in \Ana(\mathbb R^d)$, the maps $\mu_{f,g}, \Delta_{f,g} \colon \Omega_{\neq 1} \to \Ana(\mathbb R^d)$
	are holomorphic.
	\end{enumerate}
\end{theorem}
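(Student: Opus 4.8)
The plan is to read everything off the explicit formula
\[
x^K \varstar_\hbar x^L = \Lambda_\hbar(K,L)\,\frac{\binom{\abs{K+L}}{K+L}}{\binom{\abs K}{K}\binom{\abs L}{L}}\,x^{K+L},
\]
combined with the bound $\abs{\Lambda_\hbar(K,L)} \le C_\Lambda$ from the lemma preceding the theorem (valid whenever $\abs{q(\hbar)} \neq 1$) and the elementary estimate
\[
\frac{\binom{\abs{K+L}}{K+L}}{\binom{\abs K}{K}\binom{\abs L}{L}}
= \binom{\abs K + \abs L}{\abs K}\prod_{i=1}^d \binom{K_i+L_i}{K_i}^{-1}
\le 2^{\abs K + \abs L},
\]
where the first factor on the right is the ordinary binomial coefficient and each $\binom{K_i+L_i}{K_i} \ge 1$. (Note that on $\Omega_{\neq 1}\cup\set 0$ the product $\varstar_\hbar$ is well-defined by \S\ref{subsubsec:convergence:symmetrized}, since there $q(\hbar)$ is never a root of unity other than $1$.)

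First I would prove (i) for $\hbar \in \Omega_{\neq 1}$. Given $\rho \in (0,\infty)^d$, set $\rho' \coloneqq 2\rho$; the two bounds give $\norm{x^K \varstar_\hbar x^L}_\rho \le C_\Lambda\, 2^{\abs K + \abs L}\rho^{K+L} = C_\Lambda \norm{x^K}_{\rho'}\norm{x^L}_{\rho'}$, hence $\norm{f \varstar_\hbar g}_\rho \le C_\Lambda \norm f_{\rho'}\norm g_{\rho'}$ for all $f,g \in \Pol(\mathbb R^d)$. By Corollary~\ref{corollary:requiredContinuityEstimates} this is exactly the continuity of $\varstar_\hbar$ on $\Pol_\infty(\mathbb R^d) = \Pol_{\TR[0]}(\mathbb R^d)$ (Remark~\ref{remark:comparison}) and yields the unique continuous extension to the completion $\Ana(\mathbb R^d)$ (Corollary~\ref{corollary:topologyOnAnaRd}). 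The value $\hbar = 0$ is not covered by the lemma on $\Lambda_\hbar$, but it needs no separate estimate: evaluating \eqref{eq:symmetrizedProduct} at $q = 1$ gives $x^K \varstar_0 x^L = x^{K+L}$, so $\varstar_0$ is the pointwise product, for which each $\norm\argument_\rho$ is submultiplicative.

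Next, for (ii): for fixed polynomials $f, g$ the map $\mu_{f,g}$ takes values in a fixed finite-dimensional subspace $V \subset \Pol(\mathbb R^d)$, and each of its coordinates is a finite linear combination of the functions $\hbar \mapsto \Lambda_\hbar(K,L)$. These are holomorphic on $\Omega_{\neq 1}$: the numerator of $\Lambda_\hbar(K,L)$ is a polynomial in $q(\hbar)$, while the denominator $\binom{\abs{K+L}}{K+L}_{q(\hbar)}$ is a $q$-multinomial coefficient, which vanishes only when $q(\hbar)$ is a root of unity --- impossible when $\abs{q(\hbar)} \neq 1$. So $\mu_{f,g} \colon \Omega_{\neq 1} \to V$ is holomorphic for polynomial $f,g$, and so is $\Delta_{f,g} = \frac{1}{\I\hbar}(\mu_{f,g} - \mu_{g,f})$ since $0 \notin \Omega_{\neq 1}$. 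To pass to $f,g \in \Ana(\mathbb R^d)$ I would invoke Lemma~\ref{lemma:uniformEstimates:holomorphicMapsClosed} and Remark~\ref{remark:uniformEstimates:continuousMapsClosed} (their proofs are purely local, so they apply with $\Omega_{\neq 1}$ in place of $\Omega$): once the estimate of (i) is known to be \emph{locally uniform} in $\hbar$ over $\Omega_{\neq 1}$, the set of pairs $(f,g)$ for which $\hbar \mapsto f \varstar_\hbar g$ is holomorphic is closed in $\Ana(\mathbb R^d) \times \Ana(\mathbb R^d)$; since it contains the dense subset $\Pol(\mathbb R^d) \times \Pol(\mathbb R^d)$, it is everything, and $\Delta_{f,g}$ inherits holomorphy as before.

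The main obstacle is the step just flagged: upgrading the \emph{pointwise} bound $\abs{\Lambda_\hbar(K,L)} \le C_\Lambda$ to one that is locally uniform on $\Omega_{\neq 1}$, which is what both the extension via Corollary~\ref{corollary:requiredContinuityEstimates} and the closedness in Lemma~\ref{lemma:uniformEstimates:holomorphicMapsClosed} require. Revisiting the $q$-Pochhammer argument behind the preceding lemma: on $\set{\abs{q(\hbar)} < 1}$ the bound may be taken to be $(-1;\abs{q(\hbar)})_\infty / (1;\abs{q(\hbar)})_\infty^d$, which depends continuously on $\hbar$; on $\set{\abs{q(\hbar)} > 1}$ the extra factor $\abs{q(\hbar)}^{-\sum_{1\le j<i\le d} K_j L_i} \le 1$ lets the corresponding bound in $q(\hbar)^{-1}$ serve; and on any compact subset of $\Omega_{\neq 1}$ the value $\abs{q(\hbar)}$ stays bounded away from $1$, so $C_\Lambda$ can be chosen uniformly there. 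Everything else is bookkeeping.
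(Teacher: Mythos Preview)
Your proof is correct and follows essentially the same route as the paper. The only visible difference is the multinomial estimate: you factor the ratio as $\binom{\abs K+\abs L}{\abs K}\prod_i\binom{K_i+L_i}{K_i}^{-1}\le 2^{\abs K+\abs L}$, whereas the paper uses the cruder $\binom{\abs{K+L}}{K+L}\le d^{\abs K+\abs L}$ together with $\binom{\abs K}{K},\binom{\abs L}{L}\ge 1$, arriving at $\rho'=d\rho$ instead of your $\rho'=2\rho$. Your explicit handling of $\hbar=0$ and of the local uniformity of $C_\Lambda$ on compacta of $\Omega_{\neq 1}$ (needed to invoke Lemma~\ref{lemma:uniformEstimates:holomorphicMapsClosed}) spells out what the paper simply subsumes under ``the rest of the proof is similar to the proofs given in the previous section.''
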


\begin{proof}
	Using that $\binom{\abs K} K \leq d^{\abs K}$ holds for all $K \in \mathbb N_0^d$, we obtain that
	\begin{equation}
		\norm{x^K \varstar_\hbar x^L}_\rho 
		= \abs[\bigg]{\Lambda_\hbar(K,L) \frac{\binom{\abs{K+L}}{K+L}}{\binom{\abs K}{K} \binom{\abs L}{L}}} \norm{x^{K+L}}_\rho
		\leq C_\Lambda d^{\abs K + \abs L} \norm{x^K}_\rho \norm{x^L}_\rho \leq C_\Lambda \norm{x^K}_{d\rho} \norm{x^L}_{d\rho}
	\end{equation}
	holds for all $\rho \in (0,\infty)^d$.
	The rest of the proof is similar to the proofs given in the previous section.
\end{proof}

\begin{remark}
	Note that the classical limit $\hbar \to 0$ would require continuity of $\mu_{f,g}$ and $\Delta_{f,g}$ at $\hbar = 0$, yet $0 \not\in \Omega_{\neq 1}$.
	In fact, these functions are in general not continuous on $\Omega_{\neq 1} \cup \lbrace 0 \rbrace$, 
	as the poles of $\varstar_\hbar$ accumulate at $0$, so that if $f, g \in \Ana(\mathbb R^d)$ then $f \varstar_\hbar g$ 
	may in general be unbounded on $(\Omega_{\neq 1} \cup \lbrace 0 \rbrace) \cap U$ for any open neighbourhood $U \subset \mathbb C$ of $0$.
	This can be avoided if one restricts to subsets that ``stay far enough away from the poles''. For example, if $q (\hbar) = 1 + \I \hbar$, then one can show that $\mu_{f,g}$ and $\Delta_{f,g}$ are continuous on $\mathbb R \subset \Omega_{\neq 1} \cup \lbrace 0 \rbrace$,
	and we obtain a strict quantization in the sense of Definition~\ref{definition:strict}. 
\end{remark}

\subsection{Positive linear functionals for combinatorial star products of Wick type}
\label{subsec:continuity:wick}

Lastly we consider the combinatorial star product of Wick type of Example~\ref{example:complexlogcanonical}. Recall from Theorem~\ref{theorem:continuity:general} that this star product is continuous on $\weakPol(\antidiag)$ (see Definition~\ref{def:norms:general}). We prove below that the star product is also continuous on $\Pol_r(\antidiag)$ (cf.\ Remark~\ref{remark:complextopology}) which carries a coarser topology and has a larger completion. We then construct an explicit family of positive linear functionals on $\Pol (\antidiag)$,
which extend continuously to $\smash{\weak{\widehat{\Pol}}}(\antidiag)$. This family is point-separating and can therefore be used to faithfully represent the $^*$\=/algebras $\Pol (\antidiag)$ and $\weak{\widehat{\Pol}}(\antidiag)$ with product $\star_\hbar$ and complex conjugation as $^*$\=/involution on a pre-Hilbert space through the GNS-construction. As most of the positive functionals we construct do not extend continuously to $\smash{\widehat\Pol}_r(\antidiag)$, it remains unclear whether this larger algebra admits a faithful representation on some pre-Hilbert space.

The following continuity result for the combinatorial star product of Wick type of Example \ref{example:complexlogcanonical} is proven in the same way as Theorem~\ref{theorem:continuity:logcanononical:strong}, we only have to replace the generators $x_i$ by $w_i$. (Recall that $\wickDisc$ and $\Pol_r(\antidiag)$ were defined in Remark~\ref{remark:complextopology}.)

\begin{theorem}
\label{theorem:strictWick}
	Let $q \in \Hol(\Omega)$ be a holomorphic function whose Taylor expansion around $0$ is of the form $1 - t + \mathrm O (t^2)$,
	and let $\star$ be the associated combinatorial star product of Wick type introduced in Example~\ref{example:complexlogcanonical}.
	For any $r \in (0,\infty]$ and any $\hbar \in \Omega$ such that $\abs{q(\hbar)} \leq 1$ we have the following:
	\begin{enumerate}
		\item 
		The product $\star_{\hbar} \colon \Pol_r(\antidiag) \times \Pol_r(\antidiag) \to \Pol_r(\antidiag)$,
		obtained from $\star$ by evaluating $t \mapsto \hbar$, is continuous, 
		so that	$\star_{\hbar}$ extends uniquely to a continuous product 
		\[
		\star_{\hbar} \colon \Ana(\wickDisc) \times \Ana(\wickDisc) \to \Ana(\wickDisc) \punkt
		\]
		\item 
		The resulting algebra $(\Ana(\wickDisc), \star_\hbar)$ is locally multiplicatively convex.
		\item 
		For any $f, g \in \Ana(\wickDisc)$, the maps $\mu_{f,g}, \Delta_{f,g} \colon \Omega_{\leq 1} \to \Ana(\wickDisc)$, defined as in formula \eqref{eq:productAndCommutatorMaps}, are continuous on $\Omega_{\leq 1} = \{\hbar \in \Omega \mid \abs{q(\hbar)} \leq 1\}$
		and holomorphic in the interior $(\Omega_{\leq 1})^\circ = \{\hbar \in \Omega \mid \abs{q(\hbar)} < 1\}$.
	\end{enumerate}
\end{theorem}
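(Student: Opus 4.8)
The plan is to run the proof of Theorem~\ref{theorem:continuity:logcanononical:strong} essentially verbatim, replacing the real coordinates $x_i$ by the generators $w_i$ and the spaces $\Pol_r(\mathbb R^d)$, $\Ana(\realDisc)$ by their Wick-type counterparts $\Pol_r(\antidiag)$, $\Ana(\wickDisc)$ from Remark~\ref{remark:complextopology}. The crucial input is the explicit formula \eqref{eq:combainatorialProduct:Wick}, namely $w^K \star_\hbar w^L = q(\hbar)^{\sum_{1 \le i < j \le d} K_j L_i}\, w^{K+L}$, which makes sense after evaluating $t \mapsto \hbar$ for any $\hbar \in \Omega$, with convergence and associativity of $\star_\hbar$ following by the analytic-continuation argument of \S\ref{subsubsec:convergencelogcanonical} (cf.\ Remark~\ref{remark:compatibilityWithStar:Wick:strict}).

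For parts (i) and (ii) I would estimate, for $f = \sum_K f_K w^K$, $g = \sum_L g_L w^L$ with finitely many non-zero coefficients and $\rho \in (0,\infty)^d$ with $\rho < r$, using the triangle inequality together with the hypothesis $\abs{q(\hbar)} \le 1$:
\[
\norm{f \star_\hbar g}_\rho \le \sum_{K,L} \abs{f_K}\abs{g_L}\,\abs{q(\hbar)}^{\sum_{i<j} K_j L_i}\,\rho^{K+L} \le \sum_{K,L}\abs{f_K}\abs{g_L}\,\rho^{K+L} = \norm f_\rho\,\norm g_\rho \punkt
\]
By the Wick-type analogue of Corollary~\ref{corollary:requiredContinuityEstimates} --- which follows from Remark~\ref{remark:complextopology} just as Corollary~\ref{corollary:requiredContinuityEstimates} follows from Corollary~\ref{corollary:topologyOnAnaRd} --- this gives continuity of $\star_\hbar$ with $\rho' = \rho$ and constant $1$, so $\star_\hbar$ extends to $\Ana(\wickDisc) \times \Ana(\wickDisc) \to \Ana(\wickDisc)$, and the extended algebra is locally multiplicatively convex since each $\norm\argument_\rho$ is submultiplicative.

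For part (iii) I would first note that for polynomial $f, g$ the map $\mu_{f,g}$ has image in a fixed finite-dimensional subspace and is a finite linear combination of powers of $q$, hence continuous on $\Omega_{\le 1}$ and holomorphic on $(\Omega_{\le 1})^\circ$; since the estimate above is uniform in $\hbar$ on compact subsets of $\Omega_{\le 1}$, Lemma~\ref{lemma:uniformEstimates:holomorphicMapsClosed} and Remark~\ref{remark:uniformEstimates:continuousMapsClosed} propagate this to all $f, g \in \Ana(\wickDisc)$, and likewise for $\Delta_{f,g}$ away from $\hbar = 0$. The continuity of $\Delta_{f,g}$ at $\hbar = 0$ would be handled exactly as in Theorem~\ref{theorem:continuity:logcanononical:strong}: since now $q = 1 - t + \mathrm O(t^2)$, the map $\hbar \mapsto \tfrac{1}{\hbar}(q(\hbar)-1)$ extends holomorphically to $\Omega$ and is bounded, say by $C \ge 1$, on a neighbourhood $U$ of $0$, whence $\abs{\tfrac{1}{\hbar}(q(\hbar)^N - 1)} \le C N$ for $\hbar \in U \cap \Omega_{\le 1}$; combined with $\abs{q(\hbar)} \le 1$ this yields $\norm{\Delta_{w^K,w^L}(\hbar)}_\rho \le C\,\norm{\{w^K,w^L\}_\eta}_\rho$, and continuity of the Poisson bracket of $\eta$ on $\Pol_r(\antidiag)$ upgrades this to a uniform estimate $\norm{\Delta_{f,g}(\hbar)}_\rho \le C'\norm f_{\rho'}\norm g_{\rho'}$ valid also at $\hbar = 0$, so Remark~\ref{remark:uniformEstimates:continuousMapsClosed} finishes the argument.

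The only step that is not a purely mechanical substitution, and which I would therefore check with some care, is that the topological framework for $\Pol(\antidiag)$ set up in Remark~\ref{remark:complextopology} really mirrors that of \S\ref{subsubsec:continuity:topologies}: that $\Ana(\wickDisc)$ is the completion of $\Pol_r(\antidiag)$ for the norms $\norm\argument_\rho$, that polynomials are dense, and that the Poisson bracket of the (complex-looking but genuinely real) Poisson structure of Example~\ref{example:complexlogcanonical} is still continuous on it. All of this is immediate from the homeomorphism $\kappa^* \circ (\iota^*)^{-1} \colon \Ana(\realDisc) \to \Ana(\wickDisc)$ mapping $x_i$ to $w_i$ and from the continuity of differentiation, so I expect no genuine obstacle --- as the surrounding text already states, this is the same proof with the $x$'s replaced by $w$'s.
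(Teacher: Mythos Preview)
Your proposal is correct and matches the paper's approach exactly: the paper does not give a separate proof but simply states that Theorem~\ref{theorem:strictWick} ``is proven in the same way as Theorem~\ref{theorem:continuity:logcanononical:strong}, we only have to replace the generators $x_i$ by $w_i$.'' Your write-up is in fact more detailed than the paper's, carefully spelling out each step of the transfer (including the adaptation of the $\tfrac{1}{\I\hbar}(q(\hbar)-1)$ argument to the Taylor expansion $q = 1 - t + \mathrm O(t^2)$ and the use of the homeomorphism $\kappa^* \circ (\iota^*)^{-1}$ from Remark~\ref{remark:complextopology}), and all of it is sound.
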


Convenient choices for $q$ are now $q(\hbar) = \E^{-\hbar}$, in which case the quantization is defined if $\RE(\hbar) \geq 0$, or $q(\hbar) = 1-\hbar$, in which case the quantization is defined for $\hbar$ in the closed disc with radius $1$ around $1$. In both cases, we obtain a strict deformation quantization $A_\hbar = (\Ana(\complexDisc), \star_\hbar)$. Note that for both choices all coefficients in the Taylor expansion of $q$ around $\hbar=0$ are real, so that $q(\hbar) \in \mathbb R$ whenever $\hbar \in \mathbb R$. The strict star products $\star_\hbar$ are therefore compatible with the $^*$-involution given by complex conjugation (see Remark \ref{remark:compatibilityWithStar:Wick:strict}) for $\hbar \in [0, \infty)$ and $\hbar \in [0, 2]$, respectively.

In the rest of this section, we study the existence of positive linear functionals aiming at finding ``enough'' positive linear functionals to apply the GNS-construction. To this end we choose $q$ and $\hbar$ such that $q(\hbar) \in (0, \infty)$, as there turn out to be too few positive linear functionals when $q (\hbar) \leq 0$. Note that the product $\star_\hbar$ depends only on $q(\hbar)$, but not the actual value of $\hbar$ itself. Since we are only interested in single values of $q (\hbar)$, we therefore assume without loss of generality for the rest of this section that
\begin{equation}
\label{qhbar}
	q(\hbar) = \E^{-\hbar}
\end{equation}
since this function already attains all values in $(0, \infty)$. Denote the $^*$\=/algebra for this choice of $q$ by $\Pol^\hbar(\antidiag) = (\Pol (\antidiag), \star_\hbar)$. The completions $\Ana^\hbar (\wickDisc)$ and $\widehat\Pol{}^\hbar_{\MG} (\antidiag)$ (with product and involution extended by continuity)
are also $^*$\=/algebras and we have $\Pol^\hbar (\antidiag) \subset \widehat\Pol{}^\hbar_{\MG} (\antidiag) \subset \Ana^\hbar (\wickDisc)$.
\subsubsection{Positivity and continuity of deformed evaluation functionals}

Recall that a linear functional $\omega \colon A \to \mathbb C$, $a \mapsto \dualpairing{\omega}{a}$ 
on a $^*$\=/algebra $A$ is said to be \emph{positive} if $\dualpairing{\omega}{a^* a} \geq 0$ holds for all $a \in A$.

For $z \in \antidiag$, let $\delta_z \colon \Pol(\antidiag) \to \mathbb C$ denote the evaluation functional at $z$,
i.e.\ $\dualpairing{\delta_z}{f} \coloneqq \delta_z(f) = f(z)$ for $f \in \Pol(\antidiag)$.
It is immediate that $\delta_z$ is a positive linear functional for the commutative polynomial algebra $\Pol^0(\antidiag)$ for all $z \in \antidiag$,
but this is no longer true for $\Pol^\hbar (\antidiag)$ with $\hbar > 0$ (and therefore also not true for its completions).

\begin{proposition}
	Let $\hbar > 0$. 
	If $d$ is even, then $\delta_z \colon \Pol^{\hbar}(\antidiag) \to \mathbb C$ with $z \in \antidiag$ is a positive linear functional if and only if $z = 0$. 
	If $d$ is odd, then $\delta_z$ is positive if and only if $z_j = 0$ for all $j \neq \frac 1 2 (d+1)$.
\end{proposition}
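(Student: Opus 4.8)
The plan is to work entirely with the closed formula \eqref{eq:combainatorialProduct:Wick} for the combinatorial star product of Wick type (see Example~\ref{example:complexlogcanonical}), which for the choice \eqref{qhbar} reads $w^K \star_\hbar w^L = q^{\sum_{1 \leq i < j \leq d} K_j L_i}\, w^{K+L}$ with $q := q(\hbar) = \E^{-\hbar} \in (0,1)$, since $\hbar > 0$. Writing $\tilde K$ for the multi-index obtained by reversing $K$, i.e.\ $\tilde K_i = K_{d+1-i}$, one has $(w^K)^* = w^{\tilde K}$ (as already used in the proof of Proposition~\ref{proposition:compatibilityWithStar:WickType}), so for $f = \sum_K f_K w^K \in \Pol(\antidiag)$ I would expand
\[
\dualpairing{\delta_z}{f^* \star_\hbar f} = \sum_{K,L \in \mathbb N_0^d} \cc{f_K}\, f_L\, q^{\sum_{i<j} \tilde K_j L_i}\, z^{\tilde K + L} \punkt
\]
Since $z^M = \prod_i z_i^{M_i}$ vanishes unless $M$ is supported on $\set{i : z_i \neq 0}$, the whole problem reduces to understanding which pairs $(K,L)$ contribute, and with which sign.

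For the sufficiency direction I would argue: suppose either $d$ is even and $z = 0$, or $d$ is odd and $z_j = 0$ for all $j \neq m := \tfrac{d+1}{2}$ (note $z_m = \cc{z_m} \in \mathbb R$ because $z \in \antidiag$). Then a nonzero contribution in the display requires $\tilde K + L$ — hence both $\tilde K$ and $L$, hence both $K$ and $L$ — to be supported on $\set{m}$, and thus to vanish when $d$ is even, the middle slot being absent. For $K = k\,e_m$ and $L = \ell\,e_m$ the exponent $\sum_{i<j}\tilde K_j L_i$ is empty, since it would need $i < j$ with $i = j = m$. Hence the sum collapses to $\sum_{k,\ell \geq 0} \cc{a_k}\, a_\ell\, z_m^{k+\ell} = \abs[\big]{\sum_{k \geq 0} a_k z_m^k}^2 \geq 0$ with $a_k := f_{k e_m}$ (and to $\abs{f_0}^2$ when $d$ is even), so $\delta_z$ is positive.

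For the necessity direction I would argue by contraposition. Suppose $z_j \neq 0$ for some $j \neq m$ (for $d$ even, any $j$). Because $z_{d+1-j} = \cc{z_j}$, I may assume $j < \tfrac{d+1}{2}$, so that $j < d+1-j$ and $w_j$ is one of the ``antiholomorphic'' generators. The one genuinely load-bearing computation is that $w_{d+1-j} \star_\hbar w_j = q\, w_j w_{d+1-j}$ — a single reduction, as $j < d+1-j$ — whereas $w_j \star_\hbar w_{d+1-j} = w_j w_{d+1-j}$ uses none. Evaluating $\delta_z$ on $f^* \star_\hbar f$ for the test function $f = 1 + c\, w_j$ and using $z_{d+1-j} = \cc{z_j}$ gives
\[
\dualpairing{\delta_z}{f^* \star_\hbar f} = 1 + 2\,\RE(c z_j) + q\,\abs{c z_j}^2 \komma
\]
and since $z_j \neq 0$ the product $c z_j$ sweeps out all of $\mathbb C$; taking it real and equal to $-1/q$ yields the value $1 - 1/q < 0$, where $q < 1$ is decisive. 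Hence $\delta_z$ is not positive, which is the contrapositive of both claimed implications.

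I expect the only real care to be needed in the bookkeeping: keeping the reversal $K \mapsto \tilde K$ induced by the $^*$-involution straight, and pinning down exactly when $\sum_{i<j}\tilde K_j L_i$ is nonzero. Conceptually the crux is the asymmetry between $w_{d+1-j} \star_\hbar w_j = q\, w_j w_{d+1-j}$ and $w_j \star_\hbar w_{d+1-j} = w_j w_{d+1-j}$ with $q < 1$ for an antiholomorphic $w_j$; this is what destroys positivity away from the middle slot, whereas for the middle real generator and for the holomorphic generators the same test functions only ever produce honest squares $\abs{1 + c z_j}^2$ and detect nothing — which is exactly why the threshold is ``$z_j = 0$ for all $j \neq \tfrac{d+1}{2}$''.
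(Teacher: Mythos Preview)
Your proof is correct and follows essentially the same approach as the paper: both use a linear test function $f = 1 + c\,w_j$ with $j$ in the antiholomorphic range to destroy positivity (the paper specializes immediately to $c = -1/z_j$, yielding $\E^{-\hbar}-1 < 0$, while you optimize over $c$ to get $1 - 1/q < 0$), and both verify positivity at the allowed points by collapsing $\dualpairing{\delta_z}{f^* \star_\hbar f}$ to $\abs[\big]{\sum_k a_k z_m^k}^2$ using that only monomials supported on the middle index survive evaluation.
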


\begin{proof}
	Assume that $z \in \antidiag$ and that $z_j \neq 0$ for some $1 \leq j \leq \lfloor \frac 1 2 d \rfloor$. Then $z_{d+1-j} = \cc z_j$ and 
	\begin{align*}
		\dualpairing[\Big]{\delta_z}{\Big(1- \frac{w_j}{z_j}\Big)^* \star_\hbar \Big(1- \frac {w_j}{z_j}\Big)}
		&= 
		\dualpairing[\Big]{\delta_z}{1 - \frac{w_j}{z_j} - \frac{w_{d+1-j}}{z_{d+1-j}} + \E^{-\hbar} \frac{w_j w_{d+1-j}}{z_j z_{d+1-j}} } \\
		&= 1 - 1 - 1 + \E^{-\hbar} \\
		&=
		\E^{-\hbar} - 1
		<
		0 
	\end{align*}
	showing that $\delta_z$ is not positive. When $d$ is even, the above condition is satisfied for all $z \in \antidiag \setminus \set{0}$, showing that $\delta_z$ is not positive for such points. When $d$ is odd, this shows that $\delta_z$ is not positive whenever $z_j \neq 0$ for some $j \neq \frac 1 2 (d+1)$ and it remains to prove that $\delta_z$ is positive for $z \in \antidiag$ with $z_j = 0$ for all $j \neq \frac 1 2 (d+1)$. For an arbitrary polynomial $f = \sum_{K \in \mathbb N_0^d} f_K w^K$ we have
	\begin{equation}\label{eq:square}
	\begin{aligned}
		f^* \star_\hbar f 
		= \Big(\sum_{K \in \mathbb N_0^d} f_K w^K \Big)^* \star_\hbar \sum_{L \in \mathbb N_0^d} f_L w^L
		&= \sum_{K, L \in \mathbb N_0^d} \cc f{}_K f_L w^{K^\vee} \! \star_\hbar w^L \\
		&= \sum_{K, L \in \mathbb N_0^d} \cc f{}_K f_L \E^{-\hbar \sum_{1 \leq i < j \leq d} K_j^\vee L_i} w^{K^\vee + L}
	\end{aligned}
	\end{equation}
	where $K^\vee = (K_d, \dotsc, K_1)$ and thus
	\begin{multline*}
		\dualpairing{\delta_z}{f^* \star_\hbar f} 
		= \sum_{K, L \in \mathbb N_0^d} \cc f{}_K f_L \E^{-\hbar \sum_{1 \leq i < j \leq d} K_j ^\vee K_i} z^{K^\vee + L}
		= \sum_{k, \ell \in \mathbb N_0} \cc f{}_{k E} f_{\ell E} z_{(d+1)/2}^{k+\ell} = \\
		= \Big(\sum_{k \in \mathbb N_0} f_{k E} z_{(d+1)/2}^{k}\Big)^*
		\Big(\sum_{\ell \in \mathbb N_0} f_{\ell E} z_{(d+1)/2}^{\ell}\Big) 
		\geq 0 \punkt
	\end{multline*}
	Here, $E \in \mathbb N_0^d$ stands for the multi-index that is $1$ in component $(d+1)/2$ and $0$ otherwise.
\end{proof}

On the other hand, for $\hbar < 0$ the situation is different: for $d=2$, the evaluation functionals $\delta_z$ remain positive for all $z \in \antidiag$, and for general $d$ they can be deformed to positive functionals $\delta_z^\hbar \colon \Pol (\antidiag) \to \mathbb C$, defined by extending 
\begin{equation} \label{eq:deltaZHbarForNegativeHbar}
	\dualpairing{\delta_z^{\hbar}}{w^K} = z^K \E^{- \frac 1 2 \hbar \sum_{1 \leq i, j \leq d} m_{ij} K_i K_j}
\end{equation}
linearly, where $m_{ij} \coloneqq \min\lbrace i-1, j-1, d-i, d-j \rbrace$. (Note that $\delta_z^{\hbar} = \delta_z$ if $d=2$.) Indeed, we have the following proposition.
 
\begin{proposition} \label{proposition:functionals:positive}
	Let $\hbar \leq 0$. 
	Then the linear functionals $\delta_z^{\hbar} \colon \Pol^{\hbar}(\antidiag) \to \mathbb C$ are positive for all $z \in \antidiag$.
\end{proposition}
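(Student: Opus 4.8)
The plan is to prove positivity by an explicit ``completion of squares'': after expanding $\dualpairing{\delta_z^\hbar}{f^*\star_\hbar f}$ one is left with a Hermitian form whose kernel is an entrywise exponential, and the whole statement collapses to the positive semidefiniteness of one explicit integer matrix. Concretely, write $f = \sum_{K\in\mathbb N_0^d} f_K w^K$ with finitely many nonzero coefficients. Using $(w^K)^* = w^{K^\vee}$ with $K^\vee = (K_d,\dots,K_1)$, the product formula $w^{K^\vee}\star_\hbar w^L = \E^{-\hbar\sum_{1\le i<j\le d}K_j^\vee L_i}w^{K^\vee+L}$ from Example~\ref{example:complexlogcanonical} (with $q(\hbar)=\E^{-\hbar}$, cf.\ \eqref{eq:square}), and the definition \eqref{eq:deltaZHbarForNegativeHbar} of $\delta_z^\hbar$, one obtains
\begin{equation*}
\dualpairing{\delta_z^\hbar}{f^*\star_\hbar f} = \sum_{K,L\in\mathbb N_0^d} \cc f{}_K\, f_L\, z^{K^\vee+L}\, \E^{-\hbar\Phi(K,L)} \komma
\end{equation*}
where $\Phi(K,L) = \sum_{1\le i<j\le d} K_j^\vee L_i + \tfrac12\sum_{i,j} m_{ij}(K^\vee+L)_i(K^\vee+L)_j$. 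Since $z\in\antidiag$ means $z_i = \cc z_{d+1-i}$, we have $z^{K^\vee} = \cc{z^K}$; hence with $g_K \coloneqq f_K z^K$ the right-hand side equals $\sum_{K,L} \cc{g_K}\, g_L\, \E^{-\hbar\Phi(K,L)}$, a Hermitian form with only finitely many nonzero $g_K$. It therefore suffices to show that the kernel $(K,L)\mapsto\E^{-\hbar\Phi(K,L)}$ on $\mathbb N_0^d\times\mathbb N_0^d$ is positive semidefinite whenever $\hbar\le0$.

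The next step is to decompose $\Phi$. From $m_{ij} = \min\{i-1,j-1,d-i,d-j\}$ one reads off the ``flip symmetry'' $m_{d+1-i,d+1-j} = m_{d+1-i,j} = m_{ij}$. Substituting $i\mapsto d+1-i$ wherever a $K^\vee$ occurs, I would rewrite
\begin{equation*}
\Phi(K,L) = Q(K) + Q(L) + \sum_{i,j=1}^d N_{ij}K_iL_j \komma \qquad Q(K) \coloneqq \tfrac12\sum_{i,j} m_{ij}K_iK_j \komma
\end{equation*}
where $N_{ij} = m_{ij} + [\,i+j\le d\,]$, the indicator arising from the identity $\sum_{1\le i<j\le d}K_j^\vee L_i = \sum_{i+j\le d} K_iL_j$. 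The decisive point is the elementary identity
\begin{equation*}
m_{ij} + [\,i+j\le d\,] = \min\{i,j,d-i,d-j\} = \min\{e_i,e_j\} \komma \qquad e_i \coloneqq \min\{i,d-i\} \ge 0 \komma
\end{equation*}
which I would verify by distinguishing the cases $i+j\le d$ and $i+j>d$. Consequently $N = (\min\{e_i,e_j\})_{i,j}$ is the Gram matrix of the vectors $\bigl(\,[\,e_i\ge k\,]\,\bigr)_{k\ge1}$, so $N\succeq0$; write $N = V^{\top}V$ with $V$ a real matrix.

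For the conclusion, put $c \coloneqq -\hbar\ge0$ and factor $\E^{-\hbar\Phi(K,L)} = \E^{-\hbar Q(K)}\E^{-\hbar Q(L)}\E^{c\,K^{\top}NL}$. Since $Q(K)\in\mathbb R$, the first two factors are absorbed into $h_K \coloneqq g_K\E^{-\hbar Q(K)}$, reducing the claim to $\sum_{K,L}\cc{h_K}\,h_L\,\E^{c\,K^{\top}NL}\ge0$. Now $c\,K^{\top}NL = \langle\sqrt c\,VK,\sqrt c\,VL\rangle$, so $(K,L)\mapsto c\,K^{\top}NL$ is a positive semidefinite kernel; by the Schur product theorem its entrywise exponential is again positive semidefinite (expand the exponential as a power series with nonnegative coefficients, noting that the constant term is the all-ones kernel). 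Hence $\sum_{K,L}\cc{h_K}\,h_L\,\E^{c\,K^{\top}NL}\ge0$, and positivity of $\delta_z^\hbar$ follows.

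The main obstacle is the decomposition step: one has to spot the right splitting of $\Phi$. The ``honest'' contribution $\sum m_{ij}K_iL_j$ is positive semidefinite on its own, but the ``ordering'' contribution $\sum_{i+j\le d}K_iL_j$ is not (already for $d=3$), and only their sum --- precisely because $m_{ij}$ was chosen as in \eqref{eq:deltaZHbarForNegativeHbar} --- is the manifestly positive semidefinite matrix $(\min\{e_i,e_j\})$. Everything else is bookkeeping with the flip symmetry $i\mapsto d+1-i$ and standard facts about Schur products of positive semidefinite kernels.
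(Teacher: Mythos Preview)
Your proof is correct, and the overall architecture agrees with the paper's: both compute $\dualpairing{\delta_z^\hbar}{f^*\star_\hbar f}$, absorb the diagonal factors $z^K$ and $\E^{-\hbar Q(K)}$ into the coefficients, and reduce everything to showing that the matrix $M_{KL} = \E^{-\hbar\sum_{i,j}N_{ij}K_iL_j}$ (equivalently, the paper's $M_{KL} = \E^{-\hbar\sum_{i<j}K^\vee_jL_i - \hbar\sum_{i,j}m_{ij}K^\vee_jL_i}$) is positive semidefinite.

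The difference lies in how that last step is handled. The paper rewrites the exponent of $M_{KL}$ combinatorially as $\sum_{1\le j\le d-1}K_j\sum_{1\le i\le d-1}L_i + \sum_{2\le j\le d-2}K_j\sum_{2\le i\le d-2}L_i + \dotsb$, then factors $M$ as an entrywise product of matrices of the form $(\E^{-\hbar ab})_{a,b\in\mathbb N_0}$ with rows and columns duplicated, and invokes the Vandermonde determinant to see that each factor is positive semidefinite. You instead identify the closed form $N_{ij} = m_{ij} + [i+j\le d] = \min\{e_i,e_j\}$ with $e_i = \min\{i,d-i\}$, recognise this as the Gram matrix of the indicator vectors $([e_i\ge k])_{k\ge 1}$, and then apply the Schur product theorem to the exponential of the resulting positive semidefinite kernel $(K,L)\mapsto cK^\top NL$. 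Your argument is shorter and more conceptual, and it makes transparent \emph{why} the particular choice $m_{ij}=\min\{i-1,j-1,d-i,d-j\}$ in \eqref{eq:deltaZHbarForNegativeHbar} is exactly what is needed to make the non-symmetric ordering term $[i+j\le d]$ combine into something positive semidefinite. The paper's approach, on the other hand, is more explicit about the factorisation and avoids appealing to the Schur product theorem in its abstract form.
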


\begin{proof}
	Let $f = \sum_{K \in \mathbb N_0^d} f_K w^K \in \Pol(\antidiag)$. Using \eqref{eq:square}, we compute
	\begin{align*}
		\dualpairing{\delta_z^\hbar}{f^* \star_\hbar f} 
		&= 
		\sum_{K, L \in \mathbb N_0^d} \cc f{}_K f_L z^{K^\vee + L} \E^{-\hbar\sum_{1 \leq i < j \leq d} K_j^\vee L_i - \frac 1 2 \hbar \sum_{1 \leq i, j \leq d} m_{ij} (K^\vee + L)_i (K^\vee + L)_j} \\
		&= 
		\sum_{K, L \in \mathbb N_0^d} \cc v_K M_{KL} v_L
	\end{align*}
	where
	$v_L = f_L z^L \E^{ -\frac 1 2 \hbar \sum_{1 \leq i, j \leq d} m_{ij} L_i L_j}$
	and 
	$M_{KL} = \E^{-\hbar \sum_{1 \leq i < j \leq d} K^\vee_j L_i -\hbar \sum_{1 \leq i, j \leq d} m_{ij} K^\vee_j L_i}$.
	We can rewrite the second sum in the exponent of $M_{KL}$ as
	\begin{align*}
		\sum_{1 \leq i, j \leq d} \! m_{ij} K^\vee_j L_i
		&= \sum_{2 \leq i, j \leq d-1} \! K^\vee_j L_i + \sum_{2 \leq i, j \leq d-2} K^\vee_j L_i + \dotsb \\
		&= \! \sum_{2 \leq j \leq i \leq d-1} \!\! K^\vee_j L_i + \! \sum_{2 \leq i < j \leq d-1} \!\! K^\vee_j L_i
		 + \! \sum_{3 \leq j \leq i \leq d-2} \!\! K^\vee_j L_i + \! \sum_{3 \leq i < j \leq d-2} \!\! K^\vee_j L_i + \dotsb
	\end{align*}
	whence 
	\begin{align*}
		\sum_{1 \leq i < j \leq d} K^\vee_j L_i + \sum_{1 \leq i, j \leq d} m_{ij} K^\vee_j L_i
		&= \sum_{1 \leq i \leq d-1} \sum_{2 \leq j \leq d} K^\vee_j L_i + \sum_{2 \leq i \leq d-2} \sum_{3 \leq j \leq d-1} K^\vee_j L_i + \dotsb \\
		&= \sum_{1 \leq i \leq d-1} \sum_{1 \leq j \leq d-1} K_j L_i + \sum_{2 \leq i \leq d-2} \sum_{2 \leq j \leq d-2} K_j L_i + \dotsb 
	\end{align*}
	and therefore
	\begin{equation*}
			M_{KL} = \E^{-\hbar \sum_{1 \leq j \leq d-1} K_j \sum_{1 \leq i \leq d-1} L_i} \E^{-\hbar \sum_{2 \leq j \leq d-2} K_j \sum_{2 \leq i \leq d-2} L_i} \dotsb \punkt
	\end{equation*}
	Note that for every $n \in \mathbb N_0$ the matrix $(V^n_{ij})_{0 \leq i,j \leq n} \coloneqq \E^{-ij\hbar}$ 
	is a Vandermonde matrix with determinant $\det V^n = \prod_{0 \leq i < j \leq n} (\E^{-j\hbar} - \E^{-i\hbar}) \geq 0$ since $\hbar \leq 0$.
	Consequently (every principal minor of) the infinite matrix $(V_{ij})_{i,j \in \mathbb N_0} \coloneqq \E^{-ij\hbar}$ is positive semidefinite.
	Now $M$ is an entrywise product of infinite matrices, each of which is obtained from $V$ by duplicating certain rows and columns.
	Since (all principal minors of) these factors are positive semidefinite,
	(every principal minor of) the entrywise product $M$ is positive semidefinite.
	But this means that $\dualpairing{\delta_z^\hbar}{\cc f \star_\hbar f} = \sum_{K, L \in \mathbb N_0^d} \cc v_K M_{KL} v_L \geq 0$,
	so $\delta_z^\hbar$ is a positive linear functional.
\end{proof}

The deformed evaluation functionals $\delta_z^\hbar$ for $\hbar < 0$ also give rise to positive linear functionals for $\hbar > 0$ by identifying $\Pol^\hbar (\antidiag)$ with $\Pol^{-\hbar} (\antidiag)$.

	\begin{proposition} \label{proposition:equivalencetransform}
		Let $\hbar > 0$. 
		For every $z \in \antidiag$, the linear functional $\delta_z^\hbar \colon \Pol^\hbar(\antidiag) \to \mathbb C$ determined by 
		\begin{equation}
			\dualpairing{\delta_z^\hbar}{w^K} = z^K \E^{\hbar \sum_{1 \leq i < j \leq d} K_i K_j + \frac 1 2 \hbar \sum_{1 \leq i, j \leq d} m_{ij} K_i K_j}
		\end{equation}
		is positive.
\end{proposition}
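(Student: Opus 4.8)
The plan is to deduce the statement from Proposition~\ref{proposition:functionals:positive} by exhibiting an explicit $^*$\=/isomorphism $\Psi \colon \Pol^\hbar(\antidiag) \to \Pol^{-\hbar}(\antidiag)$ which carries the deformed evaluation functionals of the target to those of the source. The elementary fact that makes this work is that if $\Psi \colon A \to B$ is a unital $^*$\=/algebra homomorphism and $\omega$ is a positive linear functional on $B$, then $\omega \circ \Psi$ is a positive linear functional on $A$, since $\dualpairing{\omega \circ \Psi}{a^* \star a} = \dualpairing{\omega}{\Psi(a)^* \star \Psi(a)} \geq 0$. As $-\hbar \leq 0$, Proposition~\ref{proposition:functionals:positive} provides positive functionals $\delta^{-\hbar}_z$ on $\Pol^{-\hbar}(\antidiag)$ for every $z \in \antidiag$, so the statement will follow once we produce $\Psi$ and identify $\delta^{-\hbar}_z \circ \Psi$ with one of the functionals $\delta^\hbar_{z'}$ from the proposition, with $z'$ depending bijectively on $z$.

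For the bookkeeping I would abbreviate $\nu(K,L) \coloneqq \sum_{1 \leq i < j \leq d} K_j L_i$ and $Q(K) \coloneqq \sum_{1 \leq i < j \leq d} K_i K_j$, so that the combinatorial star product of Wick type of Example~\ref{example:complexlogcanonical} with $q(\hbar) = \E^{-\hbar}$ reads $w^K \star_\hbar w^L = \E^{-\hbar \nu(K,L)} w^{K+L}$ and $w^K \star_{-\hbar} w^L = \E^{\hbar \nu(K,L)} w^{K+L}$. Two identities carry the argument: the polarization identity $\nu(K,L) + \nu(L,K) = Q(K+L) - Q(K) - Q(L)$, and the flip symmetry $\nu(K^\vee, L^\vee) = \nu(L,K)$, both immediate from the substitution $i \mapsto d+1-i$ in the defining sums. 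The same substitution gives $Q(K^\vee) = Q(K)$ and $m_{d+1-i,\,d+1-j} = m_{ij}$, and of course $z^{K^\vee} = (z^\vee)^K$ with $z^\vee \coloneqq (z_d, \dotsc, z_1)$; since $z^\vee = \cc z \in \antidiag$ whenever $z \in \antidiag$, the map $z \mapsto z^\vee$ is an involutive bijection of $\antidiag$.

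Next I would define $\Psi \colon \Pol(\antidiag) \to \Pol(\antidiag)$ to be the linear extension of $w^K \mapsto \E^{\hbar Q(K)} w^{K^\vee}$. This is a linear bijection with $\Psi(1) = 1$; it is $^*$\=/preserving because $(w^K)^* = w^{K^\vee}$ and $Q$ is real and $\vee$-invariant; and comparing exponents while invoking the polarization identity together with $\nu(K^\vee,L^\vee) = \nu(L,K)$ yields $\Psi(w^K \star_\hbar w^L) = \Psi(w^K) \star_{-\hbar} \Psi(w^L)$, so $\Psi$ is a $^*$\=/isomorphism $\Pol^\hbar(\antidiag) \to \Pol^{-\hbar}(\antidiag)$. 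Evaluating $\delta^{-\hbar}_z \circ \Psi$ on $w^K$ and using $Q(K^\vee) = Q(K)$, the $\vee$-invariance of $m_{ij}$, and $z^{K^\vee} = (z^\vee)^K$, one obtains
\begin{equation*}
	\dualpairing{\delta^{-\hbar}_z \circ \Psi}{w^K} = (z^\vee)^K \, \E^{\hbar \sum_{1 \leq i < j \leq d} K_i K_j + \frac 1 2 \hbar \sum_{1 \leq i, j \leq d} m_{ij} K_i K_j} \komma
\end{equation*}
that is, $\delta^{-\hbar}_z \circ \Psi = \delta^\hbar_{z^\vee}$ with $\delta^\hbar_{z^\vee}$ exactly the functional in the statement. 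By the pullback remark, $\delta^\hbar_{z^\vee}$ is positive for every $z \in \antidiag$, and since $z \mapsto z^\vee$ is onto $\antidiag$, the functional $\delta^\hbar_{z'}$ is positive for every $z' \in \antidiag$.

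The only step that is not routine is guessing the isomorphism $\Psi$: neither the bare coordinate flip $w^K \mapsto w^{K^\vee}$ nor a bare diagonal rescaling $w^K \mapsto c_K w^K$ intertwines $\star_\hbar$ with $\star_{-\hbar}$, and one is forced to combine the flip with the quadratic weight $\E^{\hbar Q(K)}$ — the polarization identity $\nu(K,L) + \nu(L,K) = Q(K+L) - Q(K) - Q(L)$ is precisely what makes this combination multiplicative. Everything else is index relabelling.
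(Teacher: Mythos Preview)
Your proof is correct and follows essentially the same route as the paper: define the $^*$\=/isomorphism $\Psi \colon \Pol^\hbar(\antidiag) \to \Pol^{-\hbar}(\antidiag)$ by $w^K \mapsto \E^{\hbar Q(K)} w^{K^\vee}$, verify multiplicativity via the identity $Q(K+L)-Q(K)-Q(L)=\nu(K,L)+\nu(L,K)$ together with $\nu(K^\vee,L^\vee)=\nu(L,K)$, and then pull back the positive functionals $\delta_z^{-\hbar}$ from Proposition~\ref{proposition:functionals:positive}. The paper writes the conclusion as $\delta_z^\hbar = \delta_{\overline z}^{-\hbar}\circ\Psi_\hbar$, which is your $\delta^{-\hbar}_z\circ\Psi=\delta^\hbar_{z^\vee}$ after the relabelling $z\leftrightarrow z^\vee=\cc z$.
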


\begin{proof}
We claim that the linear map $\Psi_\hbar \colon \Pol^\hbar(\antidiag) \to \Pol^{-\hbar}(\antidiag)$ determined by 
		\begin{equation*}
			\Psi_\hbar(w^K) = \E^{\hbar \sum_{1 \leq i < j \leq d} K_i K_j} w^{K^\vee}
		\end{equation*}
		is a $^*$\=/isomorphism. It is clear that $\Psi_\hbar$ is an isomorphism of vector spaces. Moreover, it intertwines the products $\star_\hbar$ and $\star_{-\hbar}$ since
	\begin{align*}
		\Psi_\hbar(w^K) \star_{-\hbar} \Psi_{\hbar}(w^L) 
		&= \E^{\hbar \sum_{1 \leq i < j \leq d} (K_i K_j + L_i L_j)} w^{K^\vee} \star_{-\hbar} w^{L^\vee} \\
		&= \E^{\hbar \sum_{1 \leq i < j \leq d} (K_i K_j + L_i L_j + K^\vee_j L^\vee_i)} w^{K^\vee + L^\vee} \\
		&= \E^{\hbar \sum_{1 \leq i < j \leq d} ((K + L)_i (K + L)_j - K_j L_i)} w^{K^\vee + L^\vee} \\
		&= \Psi_\hbar(\E^{-\hbar \sum_{1 \leq i < j \leq d} K_j L_i} w^{K + L}) \\
		&= \Psi_\hbar(w^K \star_\hbar w^L)
	\end{align*}
	and it is straightforward to check that $\Psi_\hbar$ also intertwines the $^*$\=/involutions.
	Therefore, the pull-back $\delta_z^\hbar = \delta_{\overline z}^{-\hbar} \circ \Psi_\hbar$ of the positive linear functional $\delta_{\overline z}^{-\hbar} \colon \Pol^{-\hbar}(\antidiag) \to \mathbb C$ (see Proposition~\ref{proposition:functionals:positive}) is positive.
\end{proof}

Proposition \ref{proposition:equivalencetransform} thus yields a family of positive linear functionals on $\Pol^\hbar (\antidiag)$. The next proposition shows continuity results for these positive linear functionals.

\begin{proposition}
\label{proposition:functionals}
	Assume that $\hbar > 0$ and $d \geq 2$. 
	Then $\delta_z^\hbar$ is continuous on $\weakPol(\antidiag)$ for all $z \in \antidiag$ 
	and is continuous on $\Pol_r(\antidiag)$ if and only if $z = 0$.
\end{proposition}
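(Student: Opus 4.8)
The plan is to work directly from the explicit formula of Proposition~\ref{proposition:equivalencetransform}. Writing $f = \sum_{K \in \mathbb N_0^d} f_K w^K \in \Pol(\antidiag)$, we have
\[
\dualpairing{\delta_z^\hbar}{f} = \sum_{K \in \mathbb N_0^d} f_K\, z^K\, \E^{\hbar S(K)}, \qquad S(K) \coloneqq \sum_{1 \leq i < j \leq d} K_i K_j + \tfrac 1 2 \sum_{1 \leq i,j \leq d} m_{ij} K_i K_j,
\]
so everything reduces to controlling how fast $\E^{\hbar S(K)}$ grows with $K$. Since $m_{ij} \geq 0$, the exponent satisfies $0 \leq S(K) \leq \gamma \abs K^2$ with $\gamma \coloneqq \tfrac{d+1}{2}$ (using $\sum_{i<j} K_i K_j \leq \tfrac12 \abs K^2$ and $m_{ij} \leq d$), while on suitable one- or two-dimensional ``slices'' of $\mathbb N_0^d$ it grows exactly quadratically.

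For continuity with respect to the MacGyver topology I would set $M \coloneqq \max\set{1, \abs{z_1}, \dotsc, \abs{z_d}}$, so that $\abs{z^K} \leq M^{\abs K} \leq M^{\abs K^2}$. Then $\abs{\dualpairing{\delta_z^\hbar}{w^K}} = \abs{z^K}\,\E^{\hbar S(K)} \leq (M\E^{\hbar\gamma})^{\abs K^2} = \weakNorm{w^K}{C}$ with $C \coloneqq M\E^{\hbar\gamma}$, and summing over $K$ gives $\abs{\dualpairing{\delta_z^\hbar}{f}} \leq \weakNorm{f}{C}$ for all $f \in \Pol(\antidiag)$. Hence $\delta_z^\hbar$ is continuous on $\weakPol(\antidiag)$ (and extends continuously to $\weak{\widehat{\Pol}}(\antidiag)$).

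For the $\Pol_r$\=/topology, the case $z = 0$ is immediate: $\delta_0^\hbar(f) = f_0$ is the constant term, so $\abs{f_0} \leq \norm f_\rho$ for every admissible $\rho$. For $z \neq 0$ the plan is instead to show that $\delta_z^\hbar$ is unbounded on the unit ball of $\norm \argument_\rho$ for \emph{every} $\rho \in (0,\infty)^d$ with $\rho < r$, which (the norms $\norm\argument_\rho$ being directed) rules out any estimate $\abs{\dualpairing{\delta_z^\hbar}{f}} \leq C\norm f_\rho$. The point is to find a direction in which $S$ is quadratic. Since $z \in \antidiag$ satisfies $\abs{z_i} = \abs{z_{d+1-i}}$, the nonzero components of $z$ occur in symmetric pairs $\set{a, d+1-a}$, together with, when $d$ is odd, possibly the central coordinate $i_0 = \tfrac{d+1}{2}$. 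If there is a pair $a < b = d+1-a$ with $z_a z_b \neq 0$, one checks $m_{aa} = m_{bb} = m_{ab} = \min\set{a-1,d-a} =: \mu$, so that for the multi-index $K^{(n)}$ with $K^{(n)}_a = K^{(n)}_b = n$ and all other entries $0$ one gets $S(K^{(n)}) = (1+2\mu)n^2 \geq n^2$; hence $\abs{\dualpairing{\delta_z^\hbar}{w^{K^{(n)}}}} \geq \abs{z_a z_b}^n \E^{\hbar n^2}$ while $\norm{w^{K^{(n)}}}_\rho = (\rho_a\rho_b)^n$, and the quotient tends to $\infty$. In the only remaining case, $d$ odd with $z$ supported at $i_0$ alone, one has $d \geq 3$ (this is where $d \geq 2$ is used), $m_{i_0 i_0} = \tfrac{d-1}{2}$, and for $K^{(n)}$ equal to $n$ in slot $i_0$ one gets $S(K^{(n)}) = \tfrac{d-1}{4}n^2 \geq \tfrac12 n^2$, so that $\abs{\dualpairing{\delta_z^\hbar}{w^{K^{(n)}}}}/\norm{w^{K^{(n)}}}_\rho = (\abs{z_{i_0}}/\rho_{i_0})^n \E^{\hbar(d-1)n^2/4} \to \infty$.

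The main (and only mildly delicate) obstacle is this bookkeeping: given an arbitrary nonzero $z \in \antidiag$ and arbitrary parity of $d \geq 2$, one must single out a monomial sequence along which $S$ is quadratic and verify the relevant values of $m_{ij}$; the hypothesis $d \geq 2$ is precisely what guarantees that in each case $S$ genuinely grows quadratically (it forces $d \geq 3$ in the odd/central case, where otherwise $S$ would vanish on every such slice). Everything else is a routine estimate.
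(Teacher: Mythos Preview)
Your proof is correct and follows essentially the same approach as the paper. The only cosmetic difference is that the paper treats the non-continuity on $\Pol_r(\antidiag)$ uniformly by testing against the monomials $w_i^k \cc w_i^k = w_i^k w_{d+1-i}^k$ for any $i$ with $z_i \neq 0$ (which automatically collapses to $w_i^{2k}$ in the odd/central case), whereas you spell out the two cases separately; the underlying test sequences and exponent computations are the same.
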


\begin{proof}
	To see that $\delta_z^\hbar$ is continuous on $\weakPol(\antidiag)$, let $c = \max\lbrace 1 , \abs{z_1}, \dots, \abs{z_d} \rbrace$.
	Then we estimate 
	\begin{equation*}
		\abs{\dualpairing{\delta_z^\hbar}{w^K}} 
		= \abs{z^K} \, \E^{\hbar \sum_{1 \leq i < j < d} K_i K_j + \frac 1 2 \hbar \sum_{1 \leq i, j \leq d} m_{ij} K_i K_j}
		\leq (c \, \E^{d\hbar})^{\abs K^2} = \weakNorm{w^K}{c \E^{d \hbar}} \punkt 
	\end{equation*}
	For an arbitrary polynomial $f = \sum_{K \in \mathbb N_0^d} f_{K} w^K$ we find
	$\abs{\dualpairing{\delta_z^\hbar}{f}} \leq \sum_{K \in \mathbb N_0^d} \abs{f_{K}} \weakNorm{w^K}{c \E^{d\hbar}} = \weakNorm{f}{c \E^{d \hbar}}$.
	
	Clearly $\delta_0^\hbar$ is continuous on $\Pol_r(\antidiag)$. 
	Assume that $\delta_z^\hbar$ were continuous on $\Pol_r(\antidiag)$ for some $z \in \antidiag \setminus \set 0$.
	Then there would be $C > 1$ and $0 < \rho < r$ such that
	$\abs{\dualpairing{\delta_z^\hbar}{f}} \leq C \norm f_\rho$ holds for all $f \in \Pol(\antidiag)$.
	Choose $1 \leq i \leq d$ with $z_i \neq 0$ and take $k \in \mathbb N$, $k \geq 2$ large enough 
	such that $\E^{k\hbar/2} \geq C \rho_i \rho_{d+1-i} \abs{z_i}^{-2}$. Then we have
	$\abs{\dualpairing{\delta^q_\hbar}{w_i^k \cc w_i^k}} \geq z_i^k \cc z_i^k \E^{k^2\hbar/2} \geq C^k \rho_i^k \rho_{d+1-i}^{k} = C^k \norm{w_i^k \cc w_i^k}_\rho$,
	contradicting the estimate above and showing that $\delta_z^\hbar$ is not continuous on $\Pol_r(\antidiag)$ if $z \neq 0$.
\end{proof}
\subsubsection{Representation on a pre-Hilbert space}
In the standard formulation of quantum mechanics, the quantum observables are adjointable operators on a (pre)Hilbert space. For a given $^*$\=/algebra $A$, the existence of ``enough'' positive linear functionals gives rise to a faithful representation of $A$ as such a $^*$\=/algebra of adjointable operators via the GNS-construction (see for example \cite[\S 4.4]{schmuedgen}). More precisely, to apply the GNS-construction, one needs a point-separating family of positive linear functionals. We may use the family $\{ \delta_z^\hbar \}_{z \in \antidiag}$ of deformed evaluation functionals which by Proposition \ref{proposition:functionals} extend to $\smash{\weak{\widehat{\Pol}}}(\antidiag)$ and are still positive. It remains to show that the family $\{ \delta_z^\hbar \}_{z \in \antidiag}$ is point-separating on $\smash{\weak{\widehat{\Pol}}}(\antidiag)$.

\begin{lemma} \label{lemma:pointSeparating}
Let $\hbar \in \mathbb R$.
		Then the family of linear functionals $\set{\delta_z^\hbar}_{z \in \antidiag}$ separates the points of $\weak{\widehat{\Pol}}(\antidiag)$, 
		i.e.\ for any $f \in \weak{\widehat{\Pol}}(\antidiag)$, 
		there exists $z \in \antidiag$ such that $\dualpairing{\delta_z^\hbar}{f} \neq 0$.
\end{lemma}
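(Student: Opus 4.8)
The statement to prove is: for $\hbar \in \mathbb R$, the family $\{\delta_z^\hbar\}_{z \in \antidiag}$ separates the points of $\weak{\widehat{\Pol}}(\antidiag)$. Let me sketch how I would approach this.

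The key observation is that $\delta_z^\hbar$ is a "deformed" evaluation functional: on a monomial $w^K$ it returns $z^K$ times a nonzero weight factor $\E^{c(K)\hbar}$ depending only on $K$ (and $\hbar$), where $c(K)$ is the explicit quadratic expression appearing in Proposition~\ref{proposition:equivalencetransform}. So $\dualpairing{\delta_z^\hbar}{f}$ is, up to those fixed nonzero weights, an honest Taylor/power series in the variables $z_1, \dots, z_d$.

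Let me write out the steps. First, take $f = \sum_{K \in \mathbb N_0^d} f_K w^K \in \weak{\widehat{\Pol}}(\antidiag)$ with $f \neq 0$, so some coefficient $f_{K_0} \neq 0$. I would define a "twisted" function $\tilde f(z) \coloneqq \dualpairing{\delta_z^\hbar}{f} = \sum_K f_K \E^{c(K)\hbar} z^K$. Since $f$ lies in the completion $\weak{\widehat{\Pol}}(\antidiag)$, its coefficients decay fast enough that $\weakNorm{f}{C} < \infty$ for every $C > 0$, i.e. $\sum_K |f_K| C^{|K|^2} < \infty$. I need to check that the twisted coefficients $f_K \E^{c(K)\hbar}$ still decay fast enough for $\tilde f$ to define a holomorphic function (on all of $\mathbb C^d$, or at least on a neighbourhood of $0$). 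This is where the quadratic growth of $c(K)$ — it is $O(|K|^2)$ — meets the super-exponential decay built into the MacGyver norms: since $|\E^{c(K)\hbar}| \le \E^{C'|K|^2}$ for a suitable constant $C'$ depending on $\hbar$, and $f$ satisfies $\sum_K |f_K| C^{|K|^2} < \infty$ for \emph{every} $C$, the series $\sum_K |f_K| \E^{C'|K|^2} \rho^{|K|}$ still converges for every $\rho$. Hence $\tilde f$ is entire on $\mathbb C^d$ (in fact an element of $\Ana$-type space), and crucially it is not identically zero, because its Taylor coefficient at $K_0$ is $f_{K_0}\E^{c(K_0)\hbar} \neq 0$.

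Now the final step: a nonzero holomorphic function on $\mathbb C^d$ cannot vanish identically on the totally real submanifold $\antidiag \simeq \mathbb R^d$. This follows from the standard fact that $\antidiag$ is a maximal totally real submanifold (an $\mathbb R$-linear real form of $\mathbb C^d$ after the coordinate identification), so a holomorphic function vanishing on $\antidiag$ has all partial derivatives vanishing there and hence vanishes on a neighbourhood, thus identically by the identity theorem. Therefore there is some $z \in \antidiag$ with $\tilde f(z) = \dualpairing{\delta_z^\hbar}{f} \neq 0$, which is exactly what we want. (Alternatively, and perhaps more cleanly: restrict to the real points — recall from \S\ref{subsubsection:wick} that in suitable coordinates $\antidiag$ is parametrized by $\mathbb R^d$ via $w_i \leftrightarrow x_i$, so $\tilde f$ restricted to $\antidiag$ is literally a real-analytic function of $d$ real variables with a nonzero Taylor coefficient, hence not identically zero.)

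The main obstacle is the first half of the middle step: verifying that multiplying the Taylor coefficients of $f \in \weak{\widehat{\Pol}}(\antidiag)$ by the quadratically-growing exponential weights $\E^{c(K)\hbar}$ does not destroy convergence. But this is precisely what the MacGyver topology is designed for — the exponent $|L|^2$ in the norm $\weakNorm{\cdot}{C}$ dominates any $O(|K|^2)$ perturbation in the exponent, for all $C$ simultaneously — so this amounts to the same estimate already used in the proof of Proposition~\ref{proposition:functionals} to show $\delta_z^\hbar$ is continuous on $\weakPol(\antidiag)$; indeed continuity of $\delta_z^\hbar$ on the completion for \emph{every} $z$, combined with the explicit formula for $\dualpairing{\delta_z^\hbar}{w^K}$, already packages everything needed. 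The genuinely geometric input — nonvanishing of a nonzero holomorphic function on the real form $\antidiag$ — is classical and poses no difficulty.
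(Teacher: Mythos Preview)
Your proposal is correct and follows essentially the same approach as the paper: both arguments reduce to the observation that $\langle\delta_z^\hbar, f\rangle = \sum_K f_K \E^{c(K)\hbar} z^K$ is a convergent power series in $z$ (the paper cites continuity of $\delta_z^\hbar$ directly, you re-derive the estimate), and then both invoke uniqueness of power-series expansions on the real form $\antidiag$ to conclude that vanishing for all $z$ forces all coefficients to vanish. One minor completeness point: you only reference the formula for $\delta_z^\hbar$ from Proposition~\ref{proposition:equivalencetransform}, which covers $\hbar > 0$; for $\hbar \leq 0$ the functional is given by the different (but equally quadratic) exponent in \eqref{eq:deltaZHbarForNegativeHbar}, and the paper treats the two cases separately---your argument goes through unchanged in that case as well.
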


\begin{proof}
	Let $\hbar \geq 0$, $f = \sum_{K \in \mathbb N_0^d} f_{K} w^K \in \weak{\widehat{\Pol}}(\antidiag)$ and assume that
	$\dualpairing{\delta_z^\hbar}{f} = 0$ holds for all $z \in \antidiag$. 
	Since $\delta_z^\hbar$ is continuous on $\weak{\widehat{\Pol}}(\antidiag)$, 
	\begin{equation*}
	0 = \dualpairing{\delta_z^\hbar}{f} = \sum_{K \in \mathbb N_0^d} f_K z^K \E^{\hbar\sum_{1 \leq i < j < d} K_i K_j + \frac 1 2 \hbar \sum_{1 \leq i, j \leq d} m_{ij} K_i K_j}
	\end{equation*}
	where the power series on the right-hand side converges for all $z \in \antidiag$. 
	Hence all coefficients $f_K$ with $K \in \mathbb N_0^d$ must vanish.
	The argument for $\hbar < 0$ is analogous in which case one may use the formula \eqref{eq:deltaZHbarForNegativeHbar} to compute $\dualpairing{\delta_z^\hbar}{f}$.
\end{proof}

The existence of a point-separating family of positive linear functionals on $\weak{\widehat{\Pol}}(\antidiag)$ allows us to deduce the following result.

\begin{theorem}
\label{theorem:GNS}
Let $\weak{\widehat{\Pol}{}^\hbar}(\antidiag)$ be the strict deformation quantization given in Theorem \ref{theorem:strictWick}
 with $q(\hbar) = \E^{-\hbar}$. For any fixed $\hbar \in \mathbb R$ there exists a pre-Hilbert space $\mathcal D$ and an injective $^*$\=/homomorphism
\[
\weak{\widehat{\Pol}{}^\hbar}(\antidiag) \to \mathcal L^* (\mathcal D)
\]
where $\mathcal L^* (\mathcal D)$ denotes the $^*$\=/algebra of adjointable operators on $\mathcal D$.
\end{theorem}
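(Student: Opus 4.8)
The plan is to run the GNS-construction on each of the positive linear functionals $\delta_z^\hbar$, $z \in \antidiag$, produced in the preceding subsection, and then to take the orthogonal direct sum of the resulting $^*$\=/representations.

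First I would fix $\hbar \in \mathbb R$ and set $A \coloneqq \weak{\widehat{\Pol}{}^\hbar}(\antidiag)$. By Theorem~\ref{theorem:continuity:general} (applied to the Wick-type star product, as noted in the remark following it) the product $\star_\hbar$ and the involution are continuous on $\weakPol(\antidiag)$, hence extend to the completion $A$, which is therefore a unital $^*$\=/algebra with unit the constant function $1 \in \Pol(\antidiag) \subset A$. (For $d = 1$ the Poisson structure is trivial, $A$ is commutative, and the statement is immediate, so I would assume $d \geq 2$.) For each $z \in \antidiag$ the functional $\omega_z \coloneqq \delta_z^\hbar$ extends continuously to $A$ and remains positive there by Propositions~\ref{proposition:functionals:positive}, \ref{proposition:equivalencetransform}, and~\ref{proposition:functionals}. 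Then $(a,b) \mapsto \langle a, b\rangle_z \coloneqq \dualpairing{\omega_z}{a^* \star_\hbar b}$ is a positive semi-definite Hermitian sesquilinear form on $A$ (Hermiticity using unitality and positivity of $\omega_z$), whose null space $\mathcal N_z \coloneqq \set{a \in A \mid \langle a, a\rangle_z = 0}$ is a left ideal for $\star_\hbar$ by the Cauchy--Schwarz inequality; I would set $\mathcal D_z \coloneqq A/\mathcal N_z$ with quotient map $a \mapsto [a]_z$, a pre-Hilbert space for the induced inner product.

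Next I would define $\pi_z(a)[b]_z \coloneqq [a \star_\hbar b]_z$, which makes sense since $\mathcal N_z$ is a left ideal, is an algebra homomorphism by associativity of $\star_\hbar$ with $\pi_z(1) = \mathrm{id}_{\mathcal D_z}$, and is a $^*$\=/homomorphism into $\mathcal L^*(\mathcal D_z)$ because, $A$ being unital, for all $a,b,c \in A$
\[
\langle \pi_z(a)[b]_z, [c]_z\rangle_z = \dualpairing{\omega_z}{(a \star_\hbar b)^* \star_\hbar c} = \dualpairing{\omega_z}{b^* \star_\hbar (a^* \star_\hbar c)} = \langle [b]_z, \pi_z(a^*)[c]_z\rangle_z \komma
\]
so $\pi_z(a)$ is adjointable with $\pi_z(a)^* = \pi_z(a^*)$. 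I would then assemble $\mathcal D \coloneqq \bigoplus_{z \in \antidiag} \mathcal D_z$, the algebraic orthogonal direct sum (again a pre-Hilbert space), and $\pi \coloneqq \bigoplus_{z \in \antidiag} \pi_z$ acting componentwise; since any element of $\mathcal D$ has only finitely many nonzero components, the identities above pass to finite sums, whence $\pi(a)$ is adjointable with $\pi(a)^* = \bigoplus_z \pi_z(a^*) = \pi(a^*)$ and $\pi \colon A \to \mathcal L^*(\mathcal D)$ is a $^*$\=/homomorphism. For injectivity: if $\pi(a) = 0$, then for every $z$ one has $0 = \langle [1]_z, \pi_z(a)[1]_z\rangle_z = \dualpairing{\omega_z}{1^* \star_\hbar a \star_\hbar 1} = \dualpairing{\delta_z^\hbar}{a}$, and since $\set{\delta_z^\hbar}_{z \in \antidiag}$ separates the points of $A = \weak{\widehat{\Pol}}(\antidiag)$ by Lemma~\ref{lemma:pointSeparating}, this forces $a = 0$.

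The genuine content of the statement has in fact already been supplied by the preceding results --- the positivity of the $\delta_z^\hbar$ \emph{after} passing to the completion (Propositions~\ref{proposition:functionals:positive}--\ref{proposition:functionals}) and the point-separation of this family on $\weak{\widehat{\Pol}}(\antidiag)$ (Lemma~\ref{lemma:pointSeparating}) --- so the only step above that requires any care is the adjointability of the operators $\pi_z(a)$, which rests on $A$ being unital and on the continuity of $\star_\hbar$ and of the involution, ensuring that the relevant $^*$\=/algebra identities, valid on $\Pol(\antidiag)$, persist on the completion.
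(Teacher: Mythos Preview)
Your proof is correct and follows exactly the same approach as the paper: invoke the positivity of the $\delta_z^\hbar$ (Propositions~\ref{proposition:functionals:positive} and~\ref{proposition:equivalencetransform}, extended to the completion via the continuity in Proposition~\ref{proposition:functionals}) and the point-separation from Lemma~\ref{lemma:pointSeparating}, then apply the GNS-construction. The paper simply cites \cite[\S4.4]{schmuedgen} for the latter, whereas you have spelled out the construction of the direct-sum representation explicitly; both arguments are the same in substance.
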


\begin{proof}
The linear functionals $\delta_z^\hbar$ separate the points of $\weak{\widehat{\Pol}{}^\hbar}(\antidiag)$ by Lemma~\ref{lemma:pointSeparating},
and are all positive by Propositions~\ref{proposition:functionals:positive} and \ref{proposition:equivalencetransform}.
A faithful representation of $\weak{\widehat{\Pol}{}^\hbar}(\antidiag)$
on a pre-Hilbert space $\mathcal D$ can now be obtained via the GNS-construction \cite[\S 4.4]{schmuedgen}.
\end{proof}

\begin{remark}
\label{remark:GNS}
It would be interesting to determine whether the larger algebras $\Ana^\hbar (\wickDisc)$ for $\hbar > 0$ can also be represented faithfully on a pre-Hilbert space. Since $\delta_z^\hbar$ extends continuously to $\Ana(\wickDisc)$ only if $\hbar = 0$, it currently remains unclear whether $\Ana^\hbar(\wickDisc)$ admits a point-separating family of positive linear functionals.
\end{remark}

\subsection{Further directions}

We close with some remarks on possible directions for extending our results.

\subsubsection*{Other Poisson structures}

In all of our examples, as well as in Theorem \ref{theorem:continuity:general}, we looked at quantizations of polynomial Poisson structures for which only finitely many reductions are needed for all fixed polynomials $f, g$. This condition is satisfied for many well-known classes of polynomial Poisson structures, including constant and linear Poisson structures, as well as several higher-order Poisson structures, notably the log-canonical Poisson structures. However, one can also construct examples which do not satisfy this finiteness condition.

We expect that at least for general quadratic Poisson structures on $\mathbb R^d$, the coefficients which appear in the combinatorial star product $x^K \star x^L$ of two general monomials are rational functions in the parameters $q_{ji}^{k\ell}$, where $x_j \star x_i = \sum_{1 \leq k \leq \ell \leq d} q^{k\ell}_{ji} x_k x_\ell$ for $1 \leq i < j \leq d$. In other words, the formula for $x^K \star x^L$ may pick up extra poles, similarly to what we saw for the symmetrized combinatorial star product (see \S\ref{subsec:continuity:symmetrized}). Nevertheless, we expect similar convergence and continuity results to hold for all $\hbar$ in some dense subset of the domain of definition of the $q_{ji}^{k\ell}$'s. We thus expect our results to generalize to much larger classes of combinatorial star products. Recall that any polynomial Poisson structure can be quantized via combinatorial star products.

On the other hand, we also saw in \S\ref{subsec:continuity:combinatorial:examples} that the strongest results could be achieved for concrete examples. In order to construct strict quantizations which are defined on large function spaces it thus seems prudent to work directly with particular (classes of) Poisson structures of interest, as it may be difficult to obtain strong results for general polynomial Poisson structures.

For example, one might look to construct strict deformation quantizations of certain classes of log symplectic structures. The Poisson structures associated to the local normal form of log symplectic structures contain only linear and constant terms \cite[Thm.~37]{guilleminmirandapires} and can thus readily be quantized by the combinatorial star product (see Proposition \ref{proposition:moyalWeylAndGutt}). For degree reasons, the star product converges on polynomials and we expect the continuity estimates in this case to be similar to those of purely constant or purely linear Poisson structures (see \S\ref{subsec:continuity:weylandgutt}). However, there are also log symplectic structures on $\mathbb R^d$ which degenerate along a hypersurface of higher degree, for example along a smooth real elliptic (i.e.\ cubic) curve in $\mathbb R^2$ \cite[Ex.~1.13]{gualtierili}. In case the hypersurface is defined by polynomial equations, we expect the combinatorial approach described in \S\ref{subsection:combinatorial} to yield formal quantizations. However, higher-degree Poisson structures may not satisfy the finiteness condition on the number of reductions (cf.\ the hypotheses of Theorem \ref{theorem:continuity:general}), so some further work will be needed to obtain convergence and continuity results for these examples.

\subsubsection*{Representation-theoretic aspects of strict deformation quantizations}

In Theorem \ref{theorem:GNS} we obtained a faithful representation of a strict deformation quantization of Wick type on a pre-Hilbert space through the GNS-construction which allows one to use also operator-theoretic tools in its study. It would be interesting to see whether this result can also be generalized to larger $^*$\=/algebras (cf.\ Remark \ref{remark:GNS}), other combinatorial star products of Wick type, or whether a similar result can be shown for the symmetrized combinatorial star products introduced in \S\ref{subsubsection:symmetrized}.

\subsubsection*{Comparison with C$^*$-algebraic approach to strict quantization}

The construction of strict deformation quantizations can be viewed as a further step towards a concrete comparison between deformation quantization and other approaches to strict quantizations such as C$^*$-algebraic quantizations obtained for example via quantum groups, as initiated by M.A.~Rieffel \cite[\S12]{rieffel}. In general such a comparison is difficult, because algebraic constructions of quantizations usually contain only unbounded functions, whereas C$^*$-algebraic quantizations contain only bounded elements. However, the strict deformation quantization $(\Ana (\mathbb C), \star_\hbar)$ constructed in \S\ref{subsec:continuity:wick} contains on the one hand the ``algebraic'' quantum plane $\mathbb C \langle x, y \rangle / (y x - q (\hbar) x y)$ as a subalgebra, and on the other hand bounded functions such as $\mathrm e^{-x^2}$, say, which should also be contained in C$^*$-algebraic strict quantizations. This could facilitate and simplify concrete comparisons between these two approaches to strict quantizations.

\paragraph*{Acknowledgements}

It is a pleasure to thank Jonas Schnitzer for helpful discussions and for making us acquainted in the first place, and Martin Bordemann, Matthias Schötz, Stefan Waldmann and the anonymous referees for various valuable comments.

The first named author also gratefully acknowledges the support by the Deutsche Forschungsgemeinschaft (DFG, German Research Foundation) through the DFG Research Training Group GK1821 ``Cohomological Methods in Geometry'' at the University of Freiburg.\\[\topsep]
{\footnotesize This version of the article has been accepted for publication, but is not the Version of Record and does not reflect post-acceptance improvements, or any corrections. The Version of Record is freely available online at: \url{https://doi.org/10.1007/s00220-022-04541-4}}


\paragraph*{Competing interests} The authors declare that they have no competing interests.

\end{document}